\newcommand{\kk}{\mathbb{K}}
\newcommand{\ZZ}{\mathbb{Z}}
\newcommand{\ZZC}{\normalfont\mathcal{Z}}
\newcommand{\MM}{\normalfont\mathcal{M}}
\newcommand{\PP}{\normalfont\mathbb{P}}
\newcommand{\xx}{\normalfont\mathbf{x}}
\newcommand{\yy}{\normalfont\mathbf{y}}
\newcommand{\dd}{\normalfont\mathbf{d}}
\newcommand{\cc}{\normalfont\mathbf{c}}
\newcommand{\mm}{\normalfont\mathfrak{m}}
\newcommand{\xy}{(x_0,x_1)}
\newcommand{\SSS}{\widetilde{S}}
\newcommand{\pp}{\normalfont\mathfrak{p}}
\newcommand{\qqq}{\normalfont\mathfrak{q}}
\newcommand{\nn}{\normalfont\mathfrak{N}}
\newcommand{\nnn}{\normalfont\mathfrak{n}}
\newcommand{\rank}{\normalfont\text{rank}}
\newcommand{\sat}{\normalfont\text{sat}}
\newcommand{\depth}{\normalfont\text{depth}}
\newcommand{\grade}{\normalfont\text{grade}}
\newcommand{\Ext}{\normalfont\text{Ext}}
\newcommand{\Ker}{\normalfont\text{Ker}}
\newcommand{\Coker}{\normalfont\text{Coker}}
\newcommand{\Quot}{\normalfont\text{Quot}}
\newcommand{\HT}{\normalfont\text{ht}}
\newcommand{\Syz}{\normalfont\text{Syz}}
\newcommand{\Supp}{\normalfont\text{Supp}}
\newcommand{\Ass}{\normalfont\text{Ass}}
\newcommand{\Sym}{\normalfont\text{Sym}}
\newcommand{\Rees}{\mathcal{R}}
\newcommand{\Hom}{\normalfont\text{Hom}}
\newcommand{\EEQ}{\mathcal{K}}
\newcommand{\REQ}{\mathcal{I}}
\newcommand{\BB}{\mathcal{B}}
\newcommand{\OO}{\mathcal{O}}
\newcommand{\T}{\mathcal{T}}
\newcommand{\Cc}{\mathcal{C}}
\newcommand{\FF}{\normalfont\mathcal{F}}
\newcommand{\HL}{\normalfont\text{H}_{\mm}}
\newcommand{\HH}{\normalfont\text{H}}
\newcommand{\AAA}{\mathcal{A}}
\newcommand{\bideg}{\normalfont\text{bideg}}
\newcommand{\Proj}{\normalfont\text{Proj}}
\newcommand{\Spec}{\normalfont\text{Spec}}
\newcommand{\multProj}{\normalfont\text{MultiProj}}
\newcommand{\biProj}{\normalfont{\text{MultiProj}}_{\AAA\text{-gr}}}
\newcommand{\xvars}[1]{x_{#1,0}, x_{#1,1},\ldots,x_{#1,r_{#1}}}
\newcommand{\Sylv}{{\mathcal{S}\textit{ylv}}_{(x_0,x_1)}(\varphi)}
\newtheorem{theorem}{Theorem}[section]
\newaliascnt{corollary}{theorem}
\newtheorem{corollary}[corollary]{Corollary}
\newaliascnt{lemma}{theorem}
\newtheorem{lemma}[lemma]{Lemma}
\newaliascnt{conjecture}{theorem}
\newaliascnt{proposition}{theorem}
\newtheorem{proposition}[proposition]{Proposition}
\newaliascnt{definition}{theorem}
\newtheorem{definition}[definition]{Definition}
\newaliascnt{notation}{theorem}
\newtheorem{notation}[notation]{Notation}
\newaliascnt{example}{theorem}
\newtheorem{example}[example]{Example}
\newaliascnt{remark}{theorem}
\newtheorem{remark}[remark]{Remark}
\newaliascnt{problem}{theorem}
\newaliascnt{construction}{theorem}
\newaliascnt{algorithm}{theorem}
\newtheorem{algorithm}[algorithm]{Algorithm}
\newaliascnt{defprop}{theorem}
\def\equationautorefname~#1\null{(#1)\null}
\def\sectionautorefname~#1\null{Section #1\null}
\def\subsectionautorefname~#1\null{\S #1\null}
\begin{document}


\title{Degree and birationality of multi-graded rational maps}

\author{Laurent Bus\'e}
\address{Universit\'e C\^{o}te d'Azur, Inria,  2004 route des Lucioles, 06902 Sophia Antipolis, France}
\email{Laurent.Buse@inria.fr}
\urladdr{http://www-sop.inria.fr/members/Laurent.Buse/}

	
\author{Yairon Cid-Ruiz}
\address{Max Planck Institute for Mathematics in the Sciences, Inselstrasse 22, 04103 Leipzig, Germany}
\email{cidruiz@mis.mpg.de}	
\urladdr{https://ycid.github.io/}
	
\author{Carlos D'Andrea}
\address{Departament de Matem\`atiques i Inform\`atica, Universitat de Barcelona. Gran Via 585, 08007 Barcelona, Spain}
\email{cdandrea@ub.edu}
\urladdr{http://www.ub.edu/arcades/cdandrea.html}
	



\subjclass[2010]{Primary: 14E05, Secondary: 13D02, 13D45, 13P99.}
	
\keywords{multi-graded rational and birational maps, syzygies, blow-up algebras, Jacobian dual matrices.}

\date{\today}

\begin{abstract}
		We give formulas and effective sharp bounds for the degree of multi-graded rational maps and provide some effective and computable criteria for birationality in terms of their algebraic and geometric properties. 
		We also extend the Jacobian dual criterion  to the multi-graded setting.
		Our approach is  based on the study of  blow-up algebras, including syzygies, of the ideal generated by the defining polynomials of the rational map. A key ingredient is a new algebra that we call the \textit{saturated special fiber ring}, which turns out to be a fundamental tool to analyze the degree of a rational map.  
		We also provide a very effective birationality criterion and a complete description of the equations of the associated Rees algebra of a particular class of plane rational maps.		
\end{abstract}	

\maketitle	
	



\section{Introduction}

Questions and results concerning the degree and birationality of rational maps are classical in the literature from both an algebraic and geometric point of view. 
These problems have been extensively studied since the work of Cremona in 1863 and are still very active research topics (see e.g.~\cite{ACBook,HULEK_KATZ_SCHREYER_SYZ,Simis_cremona,dolgachev-notes,AB_INITIO,HACON} and the references therein). 
However, there seems to be very few results and no general theory available for multi-graded rational maps, i.e.~maps that are defined by a collection of multi-homogeneous polynomials over a subvariety of a product of projective spaces.   
At the same time, there is an increasing interest in those maps, for both theoretic and applied purposes (see e.g.~\cite{SEDERBERG20151,SEDERBERG20161,EFFECTIVE_BIGRAD, HWH, BOTBOL_IMPLICIT_SURFACE}). 
This paper is the first step towards the development of a general theory of rational and birational multi-graded maps, providing also new insights in the single-graded case. 
We give formulas and effective sharp bounds for the degree of multi-graded rational maps and provide some effective criteria for birationality in terms of their algebraic and geometric properties. Sometimes we also improve known results in the single-graded case. Our approach is based on the study of blow-up algebras, including syzygies, of the ideal generated by the defining polynomials of a rational map, which is called the base ideal of the map. This idea goes back to \cite{HULEK_KATZ_SCHREYER_SYZ} and since then a large amount of papers has blossomed in this direction (see e.g.~\cite{AB_INITIO, Simis_cremona,KPU_blowup_fibers,EISENBUD_ULRICH_ROW_IDEALS,Hassanzadeh_Simis_Cremona_Sat,SIMIS_RUSSO_BIRAT,EFFECTIVE_BIGRAD,SIMIS_PAN_JONQUIERES,HASSANZADEH_SIMIS_DEGREES}).

\medskip

One of our main contributions is the introduction of a new algebra that we call the \textit{saturated special fiber ring} (see \autoref{saturated_special_fiber_ring}). 
The fact that saturation plays a key role in this kind of problems has been already observed in previous works in the single-graded setting (see \cite{Hassanzadeh_Simis_Cremona_Sat}, \cite[Theorem 3.1]{JEFFRIES_MONTANO_VARBARO}, \cite[Proposition 1.2]{PAN_RUSSO}). 
Based on that, we define this new algebra by taking certain multi-graded parts of the saturation of all the powers of the base ideal. It can be seen as an extension of the more classical special fiber ring. 
We show that this new algebra turns out to be a fundamental ingredient for computing the degree of a rational map (see \autoref{theorem_factorize_rat_map}), emphasizing in this way that saturation has actually a prominent role to control the degree and birationality of a rational map. 
In particular, it allows us to prove a new general numerical formula (see \autoref{NEW_CRITERION}) from which the degree of a rational map can be extracted.

\medskip

Next, we refine the above results for rational maps whose base locus is  zero-dimensional. We begin by providing a purely algebraic proof (see \autoref{thm_degree_formula_base_points}) of the classical degree formula in intersection theory (see e.g.~\cite[Section 4.4]{FULTON_INTERSECTION_THEORY}) adapted to our setting. 
Then, we investigate the properties that can be extracted if the syzygies of the base ideal of a rational map are used.
This idea amounts to use the symmetric algebra to approximate the Rees algebra and it allows us to obtain sharp upper bounds (see \autoref{degree_in_terms_of_Sym}) for the degree of multi-graded rational maps. 
Some applications to the more specific cases of multi-projective rational maps (see \autoref{birationality_implies_sat}) and projective rational maps (see \autoref{graded_case_degree_in_terms_of_Sym}) are also discussed, with the goal of providing efficient birationality criteria in the low-degree cases (see e.g.~\autoref{upper_bound_degree_P1_P1}, 
\autoref{thm_bigrad_map_1_n} and \autoref{P^2_case}).

\medskip

The problem of detecting whether a rational map is birational has attracted a lot of attention in the past thirty years. 
A typical example is the class of Cremona maps (i.e. birational) in the projective plane that have been studied extensively (see e.g.~\cite{ACBook}). 
Obviously, the computation of the degree of a rational map yields a way of testing its birationality. 
Nevertheless, this approach is not very efficient and various  techniques have been developed in order to improve it and to obtain finer properties of birational maps. 
Among these specific techniques, the Jacobian dual criterion introduced and fully developed in \cite{Simis_cremona,SIMIS_RUSSO_BIRAT,AB_INITIO} has its own interest. In \autoref{sec:jacdualcriterion} of this paper, we will extend the theory of the Jacobian dual criterion to the multi-graded setting (see \autoref{jacob_main_crit}) and will derive some consequences where the syzygies of the base ideal are used instead of the higher-order equations of the Rees algebra   (see \autoref{thm:syzygy-jacobiandual}). 

\medskip

We finish the paper with a study of the particular class of plane rational maps whose base ideal is saturated and has a syzygy of degree one. In this setting, we provide a very effective birationality criterion (see \autoref{thm:birat-mu1=1}) and a complete description of the equations of the associated Rees algebra (see \autoref{Rees_Eq_mu_1}). This latter result is of interest on its own because finding the equations of the Rees algebra is currently a very active research topic (see e.g.~\cite{HONG_SIMIS_VASC_ELIM,KPU_BIGRAD_STRUCT,KPU_GOR3,KPU_NORMAL_SCROLL,COX_REES_MU_1,VASC_EQ_REES}).

\medskip

\noindent \textbf{Acknowledgments.} All the examples in this paper were computed with the help of the computer algebra software \textit{Macaulay2} \cite{MACAULAY2}. A package containing some computational results is publicly available \cite{Yairon} and included in the last version of \textit{Macaulay2}. The three authors were supported by the European Union's Horizon 2020 research and innovation programme under the Marie Sk\l{}odowska-Curie grant agreement No. 675789. The last two authors also acknowledge financial support from the Spanish MINECO, through the research project MTM2015-65361-P, and  the ``Mar\'ia de Maeztu'' Programme for Units of Excellence in R\&D (MDM-2014-0445).

\section{The degree of a multi-graded  rational map}

In this section we focus on the degree of a rational map between an integral multi-projective variety and an integral projective variety. 
Our main tool is the introduction of a new algebra which is a saturated version of the special fiber ring. 
The study of this algebra yields an answer for the degree of a rational map.

Our main result here is \autoref{theorem_factorize_rat_map} where we show that the saturated special fiber ring (\autoref{saturated_special_fiber_ring}) carries very important information of a rational map. 
Another fundamental result is \autoref{NEW_CRITERION}, which is the main tool for making specific computations. 

\subsection{Preliminaries on multi-graded rational maps}
\label{subsection_prelim}

Let $\kk$ be a field, $X_1 \subset \PP^{r_1}_\kk, X_2 \subset \PP^{r_2}_\kk, \ldots, X_m \subset \PP^{r_m}_\kk$ and $Y \subset \PP^{s}_\kk$ be integral projective varieties over $\kk$. 
For $i = 1,\ldots,m$, the homogeneous coordinate ring of $X_i$ is denoted by $
A_i = \kk[\mathbf{x_i}]/\mathfrak{a}_i=\kk[\xvars{i}]/\mathfrak{a}_i,$ and  $S = \kk[y_0,y_1,\ldots,y_s]/\mathfrak{b}$ stands for the homogeneous coordinate ring of $Y.$ 
We set $R = A_1 \otimes_{\kk} A_2 \otimes_{\kk} \cdots \otimes_{\kk} A_m \cong \kk[\mathbf{x}]/\left(\mathfrak{a}_1, \mathfrak{a}_2, \ldots, \mathfrak{a}_m\right)$.
We also assume that the fiber product $X=X_1\times_{\kk} X_2 \times_{\kk} \cdots \times_{\kk} X_m$ is an integral scheme (a condition that is satisfied, for instance, when $\kk$ is algebraically closed; see e.g. \cite[Lemma 4.23]{GORTZ_WEDHORN}).

Since we will always work over the field $\kk$, for any two $\kk$-schemes $W_1$ and $W_2$ their fiber product $W_1\times_{\kk} W_2$ will simply be denoted by $W_1\times W_2$.
Similarly, for any $k \ge 1$, $\PP^k$ will denote the $k$-th dimensional projective space $\PP_\kk^k$ over $\kk$.

Let $\FF : X = X_1 \times X_2 \times \cdots \times X_m \dashrightarrow Y \subset \PP^s$ be a rational map defined by $s+1$ multi-homogeneous elements $\mathbf{f}=\{f_0, f_1, \ldots, f_s\} \subset R$ of the same multi-degree. 
We say that $\mathcal{F}$ is birational if it is dominant and it has an inverse rational map given by a tuple of rational maps 
$$
\mathcal{G}: Y  \dashrightarrow (X_1, X_2, \ldots, X_m).
$$
For $i=1,\ldots,m$, each rational map $Y \dashrightarrow X_i \subset \PP^{r_i}$ is defined by $r_i+1$ homogeneous forms $\mathbf{g}_i=\{g_{i,0}, g_{i,1},\ldots, g_{i,r_i}\} \subset S$ of the same degree.

\begin{definition}
	The degree of a dominant rational map $\FF: X = X_1 \times X_2 \times \cdots \times X_m \dashrightarrow Y$ is defined as $\deg(\mathcal{F})=[K(X):K(Y)]$, where $K(X)$ and $K(Y)$ represent the fields of rational functions of $X$ and $Y$, respectively.
\end{definition}

Now, we recall some basic facts about multi-graded rings; we refer  the reader to \cite{HYRY_MULTIGRAD}, \cite{GRADED_RINGS_II} and \cite{GRADED_RINGS_I} for more details.

The ring $R = A_1 \otimes_{\kk} A_2 \otimes_{\kk} \cdots \otimes_{\kk} A_m$ has a natural multi-grading given by
$$
R = \bigoplus_{(j_1,\ldots,j_m) \in \ZZ^m} {(A_1)}_{j_1} \otimes_{\kk} {(A_2)}_{j_2} \otimes_{\kk} \cdots \otimes_{\kk} {(A_m)}_{j_m}.
$$
Let $\nn$ be the multi-homogeneous irrelevant ideal of $R$, that is 
$$
\nn = \bigoplus_{j_1>0,\ldots,j_m>0} R_{j_1,\ldots,j_m}.
$$
Similarly to the single-graded case, we can define a multi-projective scheme from $R$.
The multi-projective scheme  $\multProj(R)$ is given by 
the set of all multi-homogeneous prime ideals in $R$ which do not contain $\nn$,
and its scheme structure is obtained by using multi-homogeneous localizations. 

For any vector $\mathbf{c}=(c_1,\ldots,c_m)$ of positive integers we can define the multi-Veronese subring 
$$
R^{(\mathbf{c})} = \bigoplus_{j =0}^\infty R_{j\cdot \mathbf{c}},
$$
which we see as a standard graded $\kk$-algebra.
The canonical injection $R^{(\mathbf{c})} \hookrightarrow R$ induces an isomorphism of schemes 
$\multProj(R) \xrightarrow{\cong} \Proj\left(R^{(\mathbf{c})}\right)$ (see e.g.~\cite[Exercise II.5.11, Exercise II.5.13]{HARTSHORNE}).
In particular, if we set $\Delta=(1,\ldots,1),$ then $\Proj(R^{(\Delta)})$ corresponds with the homogeneous coordinate ring of the image of $X$ under the Segre embedding 
$$
\PP^{r_1} \times \PP^{r_2} \times \cdots \times \PP^{r_m} \longrightarrow \PP^{N},
$$
with $N = (r_1+1)(r_2+1)\cdots(r_m+1)-1$.
Therefore, for any positive vector $\mathbf{c}$ we have the following isomorphisms 
\begin{equation}
	\label{isom_multProj_Proj}
	X \cong \multProj(R) \cong \Proj\big(R^{(\Delta)}\big) \cong \Proj\big(R^{(\mathbf{c})}\big).
\end{equation}

Given a multi-graded $R$-module $M,$ we get an associated quasi-coherent sheaf $\mathcal{M}=\widetilde{M}$ of $\mathcal{O}_X$-modules. 
We have the following relations between sheaf and local cohomology modules (see e.g.~\cite{HYRY_MULTIGRAD}, \cite[Appendix A4.1]{EISEN_COMM}):
\begin{enumerate}[(i)]
	\item  There is an exact sequence of multi-graded $R$-modules 
	\begin{equation}
		\label{relation_local_sheaf_cohom_zero}
		0 \rightarrow \HH_{\nn}^0(M) \rightarrow M \rightarrow \bigoplus_{\mathbf{n} \in \ZZ^m} \HH^0(X, \mathcal{M}(\mathbf{n})) \rightarrow \HH_{\nn}^1(M) \rightarrow 0.
	\end{equation}
	\item For $j\ge 1$, there is an isomorphism of multi-graded $R$-modules 
	\begin{equation}
		\label{relation_local_sheaf_cohom_positive}
		\HH_{\nn}^{j+1}(M) \cong \bigoplus_{\mathbf{n} \in \ZZ^m}\HH^j(X, \mathcal{M}(\mathbf{n})).
	\end{equation}
\end{enumerate}

Let $I$ be the multi-homogeneous ideal $I=\left(f_0,\ldots,f_s\right)$.
Since $\FF$ is a dominant rational map, the homogeneous coordinate ring $S$ is often called the special fiber ring in the literature. Using the canonical graded morphism associated to $\FF$
\begin{eqnarray*}
	\kk[y_0,\ldots,y_s]/\mathfrak{b} & \rightarrow & R \\
	y_i & \mapsto & f_i,
\end{eqnarray*}
we classically identify $S$ with the standard graded $\kk$-algebra $\kk[I_{\dd}]$, which can be decomposed as 
$$
S = \kk[I_{\dd}] = \bigoplus_{n=0}^{\infty} {\left[I^n\right]}_{n\cdot \dd}
$$
(see the next subsection for more details). For the rest of this section we shall assume the following. 
\begin{notation}
	\label{notations_section_2}
	Let  $\mathcal{F} : X = X_1 \times X_2 \times \cdots \times X_m \dashrightarrow Y \subset \PP^s$ be a dominant rational map defined by $s+1$ multi-homogeneous forms $\mathbf{f}=\{f_0, f_1, \ldots, f_s\} \subset R$ of the same multi-degree $\mathbf{d}=(d_1,\ldots,d_m)$.
 	Let $\delta_i$ be the dimension $\delta_i=\dim(X_i)$ of the projective variety $X_i$, and  $\delta=\delta_1+\cdots+\delta_m$  the dimension of $X$.
	Let $I \subset R$ be the multi-homogeneous ideal generated by $f_0,f_1,\ldots,f_s$.
	Let $S$ be the homogeneous coordinate ring $S = \kk[f_0,f_1,\ldots,f_s]=\kk[I_{\mathbf{d}}]$ of $Y$ and let $T$ be the multi-Veronese subring $T = R^{(\dd)}=\kk[R_{\dd}]$.
	After regrading, we regard $S \subset T$ as standard graded $\kk$-algebras.
\end{notation}

\subsection{The saturated special fiber ring}

In this section, we introduce and study an algebra which is the saturated version of the special fiber ring.  
For any ideal $J \subset R$, the saturation ideal $\left(J:\nn^{\infty}\right)$ with respect to $\nn$ will be written as $J^{\sat}$.

\begin{definition}
	\label{saturated_special_fiber_ring}
	The saturated special fiber ring of $I$ is the graded $S$-algebra
	$$
	\SSS = \bigoplus_{n=0}^{\infty} {\left[{(I^n)}^{\sat}\right]}_{n\cdot \dd}.
	$$
\end{definition}
Interestingly, the algebra $\SSS$ turns out to be finitely generated as an $S$-module  (\autoref{lem_S_hat_S_tilde}$(ii)$).

The central point of our approach is a comparison between the two algebras $S$  and $\SSS$. 
Perhaps surprisingly, we show that assuming the condition of $S$ being integrally closed, then $\FF$ is birational if and only if these two algebras coincide, and more generally, we show that the difference between them yields the degree of $\FF$. 
In addition, we also prove that the algebra $\SSS$ reduces the study of the rational map $\FF$ to the study of a finite morphism.

\begin{theorem}
	\label{theorem_factorize_rat_map}
	Let $\FF: X = X_1 \times X_2 \times \cdots \times X_m \dashrightarrow Y$ be a dominant rational map. If $\dim(Y)=\delta$, then we have the following commutative diagram
	\begin{center}		
		\begin{tikzpicture}
		\matrix (m) [matrix of math nodes,row sep=3em,column sep=4em,minimum width=2em, text height=1.5ex, text depth=0.25ex]
		{
			X & &\\
			\Proj(T) & & Y \\
			& \Proj(\SSS) & \\
		};
		\path[-stealth]
		(m-1-1) edge node [left]	{$\cong$} (m-2-1)
		(m-3-2) edge node [above]	{$\mathcal{H}$} (m-2-3)
		;				
		\draw[->,dashed] (m-1-1)--(m-2-3) node [midway,above] {$\FF$};	
		\draw[->,dashed] (m-2-1)--(m-2-3) node [midway,above] {$\FF^{\prime}$};	
		\draw[->,dashed] (m-2-1)--(m-3-2) node [midway,above] {$\mathcal{G}$};	
		\end{tikzpicture}	
	\end{center}
	where the maps $\FF^{\prime}:\Proj(T) \dashrightarrow Y$, $\mathcal{G}: \Proj(T) \dashrightarrow \Proj(\SSS)$ and $\mathcal{H}: \Proj(\SSS) \rightarrow Y$ are induced from the inclusions $S \hookrightarrow T$, $\SSS \hookrightarrow T$ and $S \hookrightarrow \SSS$, respectively.
	
	\noindent
	Also, the statements below are satisfied:
	\begin{enumerate}[(i)]
		\item $\mathcal{H}: \Proj(\SSS) \rightarrow Y$ is a finite morphism with $\deg(\FF)=\deg(\mathcal{H})$.
		\item $\mathcal{G}$ is a  birational  map.
		\item $e(\SSS) = \deg(\FF)\cdot e(S)$.
		\item 
		Under the additional condition of $S$ being integrally closed, then $\FF$ is birational if and only if $\SSS=S$.		
	\end{enumerate}
\end{theorem}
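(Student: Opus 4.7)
The plan is to prove the four parts in the order (ii), (i), (iii), (iv), since each builds on the previous. Throughout, I will rely on \autoref{lem_S_hat_S_tilde}, which ensures $\SSS$ is finitely generated as an $S$-module; this already makes $\mathcal{H}$ an everywhere-defined finite morphism.

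For part (ii), the goal is to show that $\mathcal{G}$ is birational, equivalently, that the function fields $\SSS_{(0)}$ and $T_{(0)}$ coincide (as subfields of the homogeneous ring of fractions of $T$). Only the inclusion $T_{(0)} \subseteq \SSS_{(0)}$ requires work. I would fix a nonzero $h \in I_\dd \subseteq S_1 \subseteq \SSS_1$; note that $V(h) \supseteq V(I)$ in $X$. Since $T_{(0)}$ is the field of fractions of $(T_h)_0$, it suffices to place every element $t/h^n$ with $t \in T_n = R_{n\dd}$ into $\SSS_{(0)}$. My strategy is to realize $t/h^n$ as a ratio of two elements of $\SSS_{m+n}$ for a suitably large $m$: pick any nonzero $\sigma \in [(I^{m+n})^{\sat}]_{m\dd}$, which lies in $\SSS_m$ by monotonicity of saturation (from $I^{m+n} \subseteq I^m$). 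From $\sigma \in (I^{m+n})^{\sat}$ one has $\nn^k \sigma \subseteq I^{m+n}$ for some $k$, whence $\nn^k(\sigma t) \subseteq t\cdot I^{m+n} \subseteq I^{m+n}$, so $\sigma t \in [(I^{m+n})^{\sat}]_{(m+n)\dd} = \SSS_{m+n}$. Together with $\sigma h^n \in \SSS_m \cdot \SSS_n \subseteq \SSS_{m+n}$, this yields
\[
\frac{t}{h^n} \;=\; \frac{\sigma t}{\sigma h^n} \;\in\; \SSS_{(0)}.
\]
The main obstacle is establishing the existence of such a nonzero $\sigma$. I expect this to follow from an asymptotic Hilbert function estimate: the dimension of $\SSS_m = [(I^m)^{\sat}]_{m\dd}$ grows like $e(\SSS)\,m^\delta/\delta!$, while the codimension of $[(I^{m+n})^{\sat}]_{m\dd}$ inside $\SSS_m$ is controlled by global sections of a sheaf supported on $V(I)$, a scheme of dimension strictly less than $\delta$, and hence grows only as $O(m^{\delta-1})$. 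The difference is therefore positive for $m$ sufficiently large.

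Given (ii), part (i) is immediate: $\mathcal{H}$ is finite by \autoref{lem_S_hat_S_tilde}, and the factorization $\FF' = \mathcal{H} \circ \mathcal{G}$ combined with multiplicativity of degrees for composable dominant rational maps between equidimensional varieties yields $\deg(\FF) = \deg(\FF') = \deg(\mathcal{H})\cdot \deg(\mathcal{G}) = \deg(\mathcal{H})$. For (iii), the classical formula relating multiplicities under a finite surjective morphism of projective varieties of the same dimension gives $e(\SSS) = \rank_S(\SSS)\cdot e(S) = \deg(\mathcal{H})\cdot e(S) = \deg(\FF)\cdot e(S)$. Finally for (iv), the reverse direction is immediate from (iii). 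For the forward direction, if $\FF$ is birational then $\deg(\FF)=1$, which by (iii) forces $\rank_S(\SSS) = 1$, i.e., $\SSS$ and $S$ share the same homogeneous field of fractions. Since $\SSS$ is integral over $S$ (as a finitely generated $S$-module) and lies inside that common field, the hypothesis that $S$ is integrally closed forces $\SSS \subseteq S$, whence $\SSS = S$.
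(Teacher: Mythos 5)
Your overall strategy is viable and genuinely different from the paper's: you prove (ii) first by a direct algebraic argument that $T_{(0)}\subseteq \SSS_{(0)}$, and then (i), (iii), (iv) follow exactly as in the paper (multiplicativity of field degrees, the associativity formula for multiplicity, and the integral-closure argument), so that part of your reduction is clean and correct. The trick $t/h^n=(\sigma t)/(\sigma h^n)$ with $\sigma\in{\left[{(I^{m+n})}^{\sat}\right]}_{m\cdot\dd}$ is also verified correctly (the degree bookkeeping, $\nn^k\sigma t\subseteq I^{m+n}$, and $\SSS_m\cdot\SSS_n\subseteq\SSS_{m+n}$ all hold). By contrast, the paper never isolates such an element: it proves (i) first, by identifying $\widehat{S}_{(0)}$ with the local ring $\OO_{\widetilde{X},\eta}$ at the generic point of the blow-up via \v{C}ech complexes and flat base change, and deduces (ii) from $\deg(\FF)=\deg(\mathcal{G})\deg(\mathcal{H})$.

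The genuine gap is exactly the step you flag: the existence of a nonzero $\sigma\in{\left[{(I^{m+n})}^{\sat}\right]}_{m\cdot\dd}$ for $m\gg0$. This is where all the real content of the theorem sits, and your justification does not go through as stated. The codimension of ${\left[{(I^{m+n})}^{\sat}\right]}_{m\cdot\dd}$ in $\SSS_m$ is a graded piece of ${(I^m)}^{\sat}/{(I^{m+n})}^{\sat}$, which (after checking it has no $\nn$-torsion, an easy but needed step) embeds into $\HH^0\big(X,({(I^m)}^\sim/{(I^{m+n})}^\sim)(m\cdot\dd)\big)$; but this is not a \emph{fixed} coherent sheaf on $V(I)$ twisted by $\OO_X(m\cdot\dd)$ — the sheaf itself grows with $m$ — so "support of dimension $<\delta$, hence $O(m^{\delta-1})$" is not a valid inference. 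To get the $O(m^{\delta-1})$ bound (or the existence of $\sigma$ directly) you need genuine control of the blow-up/normal cone: e.g.\ the exceptional divisor has dimension $\delta-1$ and one argues with the bigraded algebra $\bigoplus_{a,t}\HH^0\big(X,({(I^a)}^\sim/{(I^{a+1})}^\sim)(t\cdot\dd)\big)$, or one notes that $\pi_2^*\OO_Y(1)$ is big on the blow-up $\widetilde{X}$ so that $\HH^0(\widetilde{X},m\,\pi_2^*\OO_Y(1)-nE)\neq0$ for $m\gg0$, and then transfers such a section back into the ideal power and its saturation (which needs a comparison of $\pi_*\OO_{\widetilde{X}}(-aE)$ with ${(I^a)}^\sim$ and the vanishing ${\left[\HH^1_{\nn}(R)\right]}_{m\cdot\dd}=0$ for $m\gg0$). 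In other words, the missing lemma requires essentially the same geometric input that the paper channels through its identification of $\widehat{S}_{(0)}$ with $\OO_{\widetilde{X},\eta}$; until it is supplied, the proof of (ii) — and hence of (i) and (iii) — is incomplete.
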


The rest of this section is dedicated to the proof of this theorem.
 Before, we need some intermediate results and definitions. 
 We begin with the following lemma that has its roots in a similar result for the single-graded case (see e.g.~\cite[Proof of Theorem 6.6]{Sim_Ulr_Vasc_mult}, \cite[Proof of Proposition 2.11]{AB_INITIO}, \cite[Remark 2.10]{KPU_blowup_fibers}).

\begin{lemma}
	\label{lem_deg_map_field_deg}
	 The degree of $\mathcal{F}$ is given by $\deg(\mathcal{F})=[T:S]$.
	\begin{proof}
		By definition we have that $\deg(\FF)=\left[K(X):K(Y)\right]=\left[\big(R^{(\dd)}\big)_{(0)} : S_{(0)}\right]$. Let $0 \neq f \in I_{\dd}.$ Then, we get 
		$$
		\Quot(S) = S_{(0)}(f) \qquad \text{and} \qquad \Quot(T)=\big(R^{(\dd)}\big)_{(0)}(f).
		$$
		Finally, since $f$ is transcendental over $\big(R^{(\dd)}\big)_{(0)}$ and $S_{(0)}$, then it follows that
		$$
		\deg(\FF)=\left[\big(R^{(\dd)}\big)_{(0)} : S_{(0)}\right]=\left[\big(R^{(\dd)}\big)_{(0)}(f) : S_{(0)}(f)\right]=[T:S],
		$$
		as claimed.
	\end{proof}
\end{lemma}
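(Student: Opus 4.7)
The plan is to reduce $\deg(\FF) = [K(X):K(Y)]$ to the field-extension degree $[\Quot(T):\Quot(S)]$ by routing through the degree-zero subfields $T_{(0)}$ and $S_{(0)}$. First I would identify the two function fields with these homogeneous localizations. The identification $K(Y) = S_{(0)}$ is standard since $Y = \Proj(S)$. For $X$, I would invoke the isomorphism $X \cong \multProj(R) \cong \Proj(T)$ recorded in \eqref{isom_multProj_Proj}, which yields $K(X) = T_{(0)}$. Hence $\deg(\FF) = [T_{(0)}:S_{(0)}]$.

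The second step is the classical passage from the degree-zero subfield to the full field of fractions. I would pick any nonzero $f \in I_{\dd}$; this $f$ sits in $T_1 = R_{\dd}$ and, via the identification $S = \kk[I_{\dd}]$, simultaneously as a degree-$1$ element of $S$. Any homogeneous fraction of degree $j$ in $\Quot(S)$ (resp.~$\Quot(T)$) equals $f^j$ times a degree-$0$ fraction, so $\Quot(S) = S_{(0)}(f)$ and $\Quot(T) = T_{(0)}(f)$. Moreover $f$ is transcendental over each of these degree-$0$ subfields, because any algebraic relation would force a nonzero equality between elements of different homogeneous degrees.

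Combining these identifications with the standard fact that adjoining a common transcendental element preserves the degree of an algebraic extension, I obtain
$$
[\Quot(T):\Quot(S)] \;=\; [T_{(0)}(f):S_{(0)}(f)] \;=\; [T_{(0)}:S_{(0)}] \;=\; \deg(\FF).
$$
The main delicate point is the first step: verifying that the multi-graded Segre/Veronese isomorphism $X \cong \Proj(T)$ actually yields $K(X) = T_{(0)}$, as opposed to some proper subfield, which relies on \eqref{isom_multProj_Proj} together with the fact that the Veronese re-grading leaves the field of rational functions unchanged. Once this geometric identification is granted, everything else is a formal manipulation of homogeneous localizations.
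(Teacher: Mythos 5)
Your proposal is correct and follows essentially the same route as the paper's own proof: identify $\deg(\FF)=[T_{(0)}:S_{(0)}]$ via $X\cong\Proj(T)$ and $Y=\Proj(S)$, adjoin a nonzero $f\in I_{\dd}$ to get $\Quot(S)=S_{(0)}(f)$ and $\Quot(T)=T_{(0)}(f)$, and use transcendence of $f$ over both degree-zero subfields to conclude $[T:S]=[T_{(0)}:S_{(0)}]=\deg(\FF)$. The only difference is that you spell out the justification of $K(X)=T_{(0)}$ through \autoref{isom_multProj_Proj}, which the paper leaves implicit.
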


Now, we introduce a new multi-graded algebra $\AAA$ defined by $\AAA=R[y_0,y_1,\ldots,y_s]$.
By an abuse of notation, for any multi-homogeneous element $g \in \AAA$ we will write $\bideg(g)=(\mathbf{a},b)$ if $\mathbf{a} \in \ZZ^m$ corresponds with the multi-degree part  in $R$ and $b \in \ZZ$ with the degree part in $\kk[\mathbf{y}]$.
We give the multi-degrees $\bideg(x_i)=(\deg(x_i), 0)$, $\bideg(y_i)=(\mathbf{0},1)$, where $\mathbf{0} \in \ZZ^m$ denotes a vector $\mathbf{0}=(0,\ldots,0)$ of $m$ copies of $0$.

Given a multi-graded $\AAA$-module $M$ and a multi-degree vector $\cc \in \ZZ^m$, then $M_{\mathbf{c}}$ will denote the $\mathbf{c}$-th multi-graded part in $R$, that is 
$$
M_{\mathbf{c}} = \bigoplus_{n  \in \ZZ} M_{\mathbf{c},n}.
$$
We remark that $M_{\mathbf{c}}$ has a natural structure as a graded $\kk[\yy]$-module.

We can present the Rees algebra $\Rees(I)=\bigoplus_{n=0}^{\infty}I^nt^n \subset R[t]$ as a quotient of the multi-graded algebra $\AAA = R[y_0,y_1,\ldots,y_s]$ by means of the map
\begin{eqnarray}
	\label{presentation_Rees}
	\Psi: \AAA & \longrightarrow & \Rees(I) \subset R[t] \\ \nonumber
	 y_i & \mapsto & f_it.
\end{eqnarray}
We set $\bideg(t)=(-\mathbf{d}, 1)$, which implies that $\Psi$ is multi-homogeneous of degree zero. 
Thus, the multi-graded structure of $\Rees(I)$ is given by 
$$
\Rees(I) = \bigoplus_{\mathbf{c} \in \ZZ^m, n \in \ZZ} {\left[\Rees(I)\right]}_{\mathbf{c}, n} \quad \text{ and } \quad {\left[\Rees(I)\right]}_{\mathbf{c}, n} = {\left[I^n\right]}_{\mathbf{c} + n\cdot\mathbf{d}}.
$$
We shall denote ${\left[\Rees(I)\right]}_{\mathbf{c}}=\bigoplus_{n=0}^\infty {\left[\Rees(I)\right]}_{\mathbf{c},n}$, and of particular interest is the case ${\left[\Rees(I)\right]}_{\mathbf{0}}=\bigoplus_{n=0}^\infty {\left[I^n\right]}_{n\cdot\mathbf{d}}$.
We note that each local cohomology module $\HH_{\nn}^i(\Rees(I))$ has a natural structure of multi-graded $\AAA$-module (see e.g.~\cite[Lemma 2.1]{DMOD}), and also that ${\left[\HH_{\nn}^i(\Rees(I))\right]}_{\cc}=\bigoplus_{n \in \ZZ} {\left[\HH_{\nn}^i(I^n)\right]}_{\cc+n\cdot\mathbf{d}}$ has a natural structure as a graded $\kk[\mathbf{y}]$-module.
Let $\mathfrak{b}=\Ker(\kk[\mathbf{y}] \rightarrow \kk[\mathbf{f}t])$ be the kernel of the map
\begin{equation*}
	\label{mapping_y_to_ft}
	\kk[\mathbf{y}] \rightarrow \kk[\mathbf{f}t] \subset \Rees(I), \qquad y_i \mapsto f_it,
\end{equation*}
then we have that $S \cong \kk[\mathbf{y}]/\mathfrak{b}$ and that for any $h \in \mathfrak{b}$ the multiplication map ${\Rees(I)} \xrightarrow{\cdot h} {\Rees(I)}$ is zero.
So the induced map on local cohomology ${\left[\HH_{\nn}^i(\Rees(I))\right]}_{\cc} \xrightarrow{\cdot h} {\left[\HH_{\nn}^i(\Rees(I))\right]}_{\cc}$ is also zero for any $h \in \mathfrak{b}.$ This implies that ${\left[\HH_{\nn}^i(\Rees(I))\right]}_{\cc}$ has a natural structure as a graded $S$-module.

\begin{remark}
	The blow-up $\widetilde{X}=Bl_{I}(X)$ of $X$ along $V(I)$ is defined as the multi-projective scheme obtained by considering the Rees algebra $\Rees(I)$ as a multi-graded $\AAA$-algebra. We shall use the notation
	$$
	\widetilde{X} = \biProj(\Rees(I)) \subset X \times \PP^s,
	$$
	where $\widetilde{X}$ can be canonically embedded in $X \times \PP^s$.
	
	By considering the Rees algebra $\Rees(I)$ only as a multi-graded $R$-algebra, then we obtain a multi-projective scheme which is an ``affine version'' of the blow-up $\widetilde{X}$, and that we shall denote by
	$$
	\multProj_{R\text{-gr}}\left(\Rees(I)\right) \subset X \times \mathbb{A}^{s+1},
	$$ 
	where $\multProj_{R\text{-gr}}\left(\Rees(I)\right)$ can be canonically embedded in $X \times \mathbb{A}^{s+1}$.
\end{remark}

\begin{proposition}
	\label{prop_finite_dim_Hn_Rees}
	For each $i \ge 0$ and $\mathbf{c}=(c_1,\ldots,c_m)$, we have the following statements:
	\begin{enumerate}[(i)]
		\item 	${\left[\HH_{\nn}^i(\Rees(I))\right]}_{\cc}$ is a finitely generated graded $S$-module.
		\item $\HH^{i}\left(\multProj_{R\text{-gr}}({\Rees(I)}), \mathcal{O}_{\multProj_{R\text{-gr}}({\Rees(I)})}(\cc)\right)$ is a finitely generated graded $S$-module.
	\end{enumerate}
	\begin{proof}
		$(i)$ It is enough to prove that ${\left[\HH_{\nn}^i(\Rees(I))\right]}_{\cc}$ is a finitely generated $\kk[\mathbf{y}]$-module.
		Suppose that $F_{\bullet}: \cdots \rightarrow F_2 \rightarrow F_1 \rightarrow F_0 \rightarrow 0$ represents the minimal free resolution of $\Rees(I)$ as an $\AAA$-module.
		Let $\mathcal{C}_{\nn}^{\bullet}$ be the \v{C}ech complex with respect to the ideal $\nn.$ We consider the double complex $F_{\bullet} \otimes_R \mathcal{C}_{\nn}^{\bullet}$. 
		Computing cohomology by rows and then by columns, gives us a spectral sequence that collapses on the first column and the terms are equal to $\HH_{\nn}^i(\Rees(I))$.
		On the other hand, by computing cohomology via columns, we get the spectral sequence 
		$$
		E_1^{-p,q} = \HH_{\nn}^q(F_p) \Longrightarrow \HH_{\nn}^{q-p}(\Rees(I)).
		$$
		Since each ${\left[\HH_{\nn}^q(F_p)\right]}_{\cc}$ is a finitely generated (free) $\kk[\mathbf{y}]$-module, then it follows that ${\left[\HH_{\nn}^{q-p}(\Rees(I))\right]}_{\cc}$ is also finitely generated as a $\kk[\mathbf{y}]$-module.
		
		$(ii)$ When $i \ge 1$,  we get the result from \autoref{relation_local_sheaf_cohom_positive} and the previous part $(i)$.
		By \autoref{relation_local_sheaf_cohom_zero} we have the exact sequence 
		$$
		0 \rightarrow {\left[\Rees(I)\right]}_{\cc} \rightarrow \HH^{0}\left(\multProj_{R\text{-gr}}({\Rees(I)}), \mathcal{O}_{\multProj_{R\text{-gr}}({\Rees(I)})}(\cc)\right) \rightarrow {\left[\HH_{\nn}^1(\Rees(I))\right]}_{\cc} \rightarrow 0.
		$$
		The $S$-module ${\left[\Rees(I)\right]}_{\cc}$ is clearly finitely generated, and from part $(i)$ the $S$-module  ${\left[\HH_{\nn}^1(\Rees(I))\right]}_{\cc}$  is also finitely generated.
		Therefore, the exact sequence above gives us the result for $i=0$.
		\end{proof}
\end{proposition}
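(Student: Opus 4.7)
The plan is to prove $(i)$ first and then deduce $(ii)$ from it via the cohomology-comparison statements \autoref{relation_local_sheaf_cohom_zero}--\autoref{relation_local_sheaf_cohom_positive}. For $(i)$, my starting point is the observation, already established in the paragraph preceding the proposition, that $\mathfrak{b}$ annihilates every ${\left[\HH_{\nn}^i(\Rees(I))\right]}_{\cc}$. Hence the $S$-action on this module factors through $\kk[\mathbf{y}]/\mathfrak{b}$, and it suffices to show finite generation as a graded $\kk[\mathbf{y}]$-module, letting me forget the defining equations of $S$ altogether.

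I would then run the classical double-complex argument. Take a minimal multi-graded free resolution $F_\bullet \to \Rees(I)$ over $\AAA$; this exists with each $F_p$ of finite rank because $\Rees(I)$ is a cyclic $\AAA$-module via $\Psi$. Tensoring with the \v{C}ech complex $\mathcal{C}_\nn^\bullet$, a complex of flat $R$-modules, produces a double complex $F_\bullet \otimes_R \mathcal{C}_\nn^\bullet$ with two convergent spectral sequences. One collapses, by flatness of the \v{C}ech terms, to $\HH_\nn^\ast(\Rees(I))$, while the other has first page
\[
E_1^{-p,q} = \HH_\nn^q(F_p) \cong \bigoplus_{j} \HH_\nn^q(R)(-\mathbf{a}_{p,j}) \otimes_\kk \kk[\mathbf{y}](-b_{p,j}),
\]
using the factorization $\AAA = R \otimes_\kk \kk[\mathbf{y}]$. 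Restricted to a fixed multi-degree $\cc$, each summand is a finite free $\kk[\mathbf{y}]$-module because ${\left[\HH_\nn^q(R)\right]}_{\cc-\mathbf{a}_{p,j}}$ is a finite-dimensional $\kk$-vector space, a standard fact for multi-graded local cohomology of a finitely generated multi-graded $\kk$-algebra. Since $\kk[\mathbf{y}]$ is Noetherian, subquotients of finitely generated modules remain finitely generated, so the spectral sequence transfers finite generation to the abutment ${\left[\HH_\nn^{i}(\Rees(I))\right]}_{\cc}$, proving $(i)$.

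For $(ii)$, the case $i \ge 1$ is immediate from \autoref{relation_local_sheaf_cohom_positive} together with $(i)$. For $i = 0$, extract the $\cc$-th multi-graded piece of \autoref{relation_local_sheaf_cohom_zero} to sandwich the sheaf cohomology module in a four-term exact sequence between ${\left[\Rees(I)\right]}_\cc$ and ${\left[\HH_\nn^1(\Rees(I))\right]}_\cc$. By $(i)$ the right-hand term is finitely generated over $S$, and ${\left[\Rees(I)\right]}_\cc = \bigoplus_n {\left[I^n\right]}_{\cc+n\dd}$ is a quotient of the finitely generated $S$-module $R_\cc \otimes_\kk S$: indeed, since $I$ is generated in multi-degree $\dd$ one has $I^n = R \cdot S_n$, while $\dim_\kk R_\cc < \infty$. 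Hence the middle term in the four-term sequence is also finitely generated over $S$.

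The main technical obstacle I anticipate is the convergence bookkeeping in the spectral sequence: at a fixed total degree $i$ and fixed multi-degree $\cc$, one must ensure that only finitely many $(p,q)$ with $q - p = i$ give non-zero contributions. This follows from the finite cohomological dimension of $\nn$ acting on free $\AAA$-modules, which bounds $q$; combined with the finite rank of each $F_p$ and the finite-dimensionality of graded pieces of $\HH_\nn^q(R)$, the Noetherianness of $\kk[\mathbf{y}]$ then closes the argument.
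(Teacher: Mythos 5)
Your proposal is correct and follows essentially the same route as the paper: the same double complex $F_\bullet \otimes_R \mathcal{C}_\nn^\bullet$ built from the minimal $\AAA$-free resolution, the same spectral sequence $E_1^{-p,q}=\HH_\nn^q(F_p)$ with the observation that its $\cc$-graded pieces are finite free $\kk[\mathbf{y}]$-modules, and the same deduction of $(ii)$ from the sheaf--local cohomology comparison sequences. Your extra remarks (the $S$-action factoring through $\kk[\mathbf{y}]$, the explicit generation of ${\left[\Rees(I)\right]}_{\cc}$ by $R_\cc$ over $S$, and the convergence bookkeeping) only make explicit what the paper leaves implicit.
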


\begin{remark}
The degree of the generators of $\widetilde{S}$ can be effectively bounded by means of the argument given in \autoref{prop_finite_dim_Hn_Rees} and using the relations between $\widetilde{S}$ and $\widehat{S}$. Using this idea,  an algorithm to compute the saturated special fiber ring $\widetilde{S}$ has been designed and implemented by the second author \cite{Yairon}  in  a  package included in the latest version of {\it Macaulay2}   \cite{MACAULAY2}. It computes the saturated special fiber ring of an ideal in a multi-graded setting and provides new algorithms to compute the degree of a rational map and test birationality. 
\end{remark}

By \autoref{prop_finite_dim_Hn_Rees}, the multi-projective scheme $\multProj_{R\text{-gr}}\left(\Rees(I)\right)$ yields the following finitely generated $S$-algebra
$$
\widehat{S} := \HH^{0}\left(\multProj_{R\text{-gr}}({\Rees(I)}), \mathcal{O}_{\multProj_{R\text{-gr}}({\Rees(I)})}\right).
$$
We will see that $\widehat{S}$ carries the same information as $\widetilde{S}$, but has the advantage of having some geometrical content as the global sections of an ``affine version'' of the blow-up $\widetilde{X}$, a fact which is going to be fundamental in the proofs of \autoref{theorem_factorize_rat_map} and \autoref{NEW_CRITERION}.

Let $\{\vartheta_1,\ldots,\vartheta_r \}$ be a set of generators of $\nn$, 
then $\multProj_{R\text{-gr}}({\Rees(I)})$  has an affine open cover 
\begin{equation}
	\label{affine_cover_R_gr}
	\mathcal{U} = {\left(U_i\right)}_{i=1,\ldots,r},\qquad\qquad
	U_i = \Spec\left({\Rees(I)}_{(\vartheta_i)}\right),
\end{equation}
where ${\Rees(I)}_{(\vartheta_i)}$ denotes the graded $S$-module
$$
{\Rees(I)}_{(\vartheta_i)}= {\left[{\Rees(I)}_{\vartheta_i}\right]}_{\mathbf{0},*}
$$
obtained by restricting to multi-degree $\mathbf{0}$ in the multi-grading corresponding with $R$.
Since 
$$
\Rees(I)\;\cong\; \bigoplus_{n=0}^\infty {I^n}(n\cdot\dd),
$$
 then computing \v{C}ech cohomology with respect to $\mathcal{U}$ gives us the following equality
$$
\widehat{S} = \bigoplus_{n=0}^{\infty} \HH^0\left(X,{(I^n)}^\sim (n\cdot\dd)\right).
$$
In the next lemma, with some simple remarks, we show that $\SSS$ and $\widehat{S}$ are almost the same. 

\begin{lemma}
		\label{lem_S_hat_S_tilde}
		The following statements hold:
		\begin{enumerate}[(i)]
			\item There is an inclusion $\SSS \subset \widehat{S}$, which becomes an equality $\left[\SSS\right]_n=\left[\widehat{S}\right]_n$ for $n\gg 0$.
			\item $\SSS$ is a finitely generated $S$-module.
			\item The two algebras have the same multiplicity, that is $e(\SSS)=e(\widehat{S})$.
			\item $\Proj(\SSS) \cong \Proj(\widehat{S})$.
			\item If $\grade(\nn)\ge 2$, then $\SSS=\widehat{S}$.
		\end{enumerate}
		\begin{proof}
			$(i)$ 
			Since we have an isomorphism of sheaves ${\left(I^n\right)}^{\sim}(n\cdot\dd) \cong {\left({\left(I^n\right)}^{\sat}\right)}^{\sim}(n\cdot\dd)$, then from \autoref{relation_local_sheaf_cohom_zero} we get the short exact sequence
			\begin{equation}
				\label{comparison_S_hat_S_tilde1}
				0 \rightarrow {\left[{(I^n)}^{\sat}\right]}_{n\cdot\dd} \rightarrow \HH^0\left(X,{(I^n)}^\sim(n\cdot\dd)\right) \rightarrow {\left[\HH_{\nn}^1\left({(I^n)}^{\sat}\right)\right]}_{n\cdot\dd} \rightarrow 0
			\end{equation}
			for each $n\ge 1$.
			The short exact sequence $0 \rightarrow {(I^n)}^{\sat} \rightarrow R \rightarrow R/{(I^n)}^{\sat} \rightarrow 0$ yields the long exact sequence 
			\begin{equation}
				\label{comparison_S_hat_S_tilde2}
				{\left[\HH_{\nn}^0\left(R/{(I^n)}^{\sat}\right)\right]}_{n\cdot\dd} \rightarrow {\left[\HH_{\nn}^1\left({(I^n)}^{\sat}\right)\right]}_{n\cdot\dd} \rightarrow  {\left[\HH_{\nn}^1(R)\right]}_{n\cdot\dd} \rightarrow 			{\left[\HH_{\nn}^1\left(R/{(I^n)}^{\sat}\right)\right]}_{n\cdot\dd}.
			\end{equation}
			We always have that			$\HH_{\nn}^0\left(R/{(I^n)}^{\sat}\right)=0$, and that ${\left[\HH_{\nn}^1(R)\right]}_{n\cdot\dd}=0$ for $n\gg0$ (see e.g. \cite[Lemma 4.2]{Kleiman_geom_mult}).
			Hence, we get that ${\left[\HH_{\nn}^1\left({(I^n)}^{\sat}\right)\right]}_{n\cdot\dd}=0$ for $n\gg 0$, and so the result follows. 
			
			$(ii)$ Straightforward from part $(i)$ and \autoref{prop_finite_dim_Hn_Rees}$(ii)$.
			
			$(iii)$ It is clear from part $(i)$.
			
			$(iv)$ Follows from part $(i)$ (see e.g.~\cite[Exercise II.2.14]{HARTSHORNE}).
			
			$(v)$ The condition $\grade(\nn)\ge 2$ implies that $\HH_{\nn}^1(R)=0$, then the required equality is obtained from \autoref{comparison_S_hat_S_tilde1}	and \autoref{comparison_S_hat_S_tilde2}.
		\end{proof}
\end{lemma}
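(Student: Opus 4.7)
The plan is to treat all five claims as consequences of a single computation comparing the graded pieces of $(I^n)^{\sat}$ at multi-degree $n\cdot\dd$ with the global sections $H^0\!\bigl(X,\widetilde{I^n}(n\cdot\dd)\bigr)$. The starting observation is that the sheafifications $\widetilde{I^n}$ and $\widetilde{(I^n)^{\sat}}$ coincide on $X$, so applying the four-term exact sequence \eqref{relation_local_sheaf_cohom_zero} to the module $M=(I^n)^{\sat}$ and extracting the $(n\cdot\dd)$-th multi-graded component gives
\[
0 \longrightarrow \bigl[H_{\nn}^0((I^n)^{\sat})\bigr]_{n\cdot\dd} \longrightarrow \bigl[(I^n)^{\sat}\bigr]_{n\cdot\dd} \longrightarrow H^0\!\bigl(X,\widetilde{I^n}(n\cdot\dd)\bigr) \longrightarrow \bigl[H_{\nn}^1((I^n)^{\sat})\bigr]_{n\cdot\dd} \longrightarrow 0.
\]
Because $(I^n)^{\sat}$ is by construction $\nn$-saturated, the leftmost term vanishes, yielding the inclusion $\SSS \subset \widehat{S}$ of (i).

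For the asymptotic equality in (i), and ultimately for (v), I would bound the rightmost term $\bigl[H_{\nn}^1((I^n)^{\sat})\bigr]_{n\cdot\dd}$ using the short exact sequence $0 \to (I^n)^{\sat} \to R \to R/(I^n)^{\sat} \to 0$. Again by construction of the saturation, $R/(I^n)^{\sat}$ has no $\nn$-torsion, so $H_{\nn}^0(R/(I^n)^{\sat})=0$, and the resulting piece of the long exact sequence embeds $\bigl[H_{\nn}^1((I^n)^{\sat})\bigr]_{n\cdot\dd}$ into $\bigl[H_{\nn}^1(R)\bigr]_{n\cdot\dd}$. The asymptotic vanishing of this last module along the arithmetic progression $n\cdot\dd$ for $n\gg 0$ is a standard finiteness fact for the multi-graded local cohomology of the Noetherian ring $R$ (of the Kleiman/\-Brodmann–Sharp type), which proves the equality of the two algebras in large degree. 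For (v), the hypothesis $\grade(\nn)\ge 2$ forces $H_{\nn}^0(R)=H_{\nn}^1(R)=0$ identically, so the same argument collapses to give $\bigl[H_{\nn}^1((I^n)^{\sat})\bigr]_{n\cdot\dd}=0$ for all $n$, hence $\SSS=\widehat{S}$ on the nose.

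Parts (ii)–(iv) are then formal consequences. Proposition \ref{prop_finite_dim_Hn_Rees}(ii) with $\cc=\mathbf{0}$ already gives that $\widehat{S}$ is a finitely generated graded $S$-module, and since $S$ is Noetherian the inclusion from (i) yields (ii). The equality of Hilbert functions in large degree supplied by (i) gives the equality of multiplicities (iii) and of Proj schemes (iv), the latter via the standard fact that two graded rings agreeing in high degrees have isomorphic Proj. The only genuine difficulty I foresee is the asymptotic vanishing $\bigl[H_{\nn}^1(R)\bigr]_{n\cdot\dd}=0$ for $n\gg 0$: one must verify that the relevant finite generation/vanishing theorems for local cohomology apply to the multi-graded irrelevant ideal $\nn$ (which is not the full graded maximal ideal of $R$), and that the multi-degrees $n\cdot\dd$ eventually escape the finite support of $H_{\nn}^1(R)$ along the ray spanned by $\dd$.
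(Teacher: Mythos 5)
Your proposal is correct and follows essentially the same route as the paper: the four-term sequence \eqref{relation_local_sheaf_cohom_zero} applied to $(I^n)^{\sat}$ gives the inclusion $\SSS\subset\widehat{S}$, the sequence $0\to (I^n)^{\sat}\to R\to R/(I^n)^{\sat}\to 0$ embeds $\bigl[\HH_{\nn}^1((I^n)^{\sat})\bigr]_{n\cdot\dd}$ into $\bigl[\HH_{\nn}^1(R)\bigr]_{n\cdot\dd}$, whose asymptotic vanishing (the paper cites \cite[Lemma 4.2]{Kleiman_geom_mult}) and the vanishing under $\grade(\nn)\ge 2$ give (i) and (v), with (ii)--(iv) following formally exactly as in the paper.
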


The next example shows that the condition of $S$ being integrally closed cannot be dropped in \autoref{theorem_factorize_rat_map}$(iv)$.
	
\begin{example}
		Let $R = \kk[x_0,x_1]$ and consider the parametrization of the rational quartic 
		$$
		\FF: \PP^1 \rightarrow \PP^3, \quad (x_0:x_1) \rightarrow (x_0^4: x_0^3x_1: x_0x_1^3: x_1^4).
		$$ 
		The homogeneous coordinate ring of the rational quartic, i.e.~the image of $\FF$, is 
		$$
		S \cong \frac{\kk[y_0,y_1,y_2,y_3]}
		{\left({y}_{1} {y}_{2}-{y}_{0} {y}_{3},{y}_{2}^{3}-{y}_{1} {y}_{3}^{2},{y}_{0} {y}_{2}^{2}-{y}_{1}^{2} {y}_{3},{y}_{1}^{3}-{y}_{0}^{2} {y}_{2}
			\right)}.
		$$
		It is known that $S$ is not normal (i.e, the rational quartic curve is not projectively normal; see e.g. \cite[Exercise I.3.18]{HARTSHORNE}, \cite[Exercise 18.8]{EISEN_COMM}).
		It then follows that the condition of $S$ being integrally closed cannot be dropped in \autoref{theorem_factorize_rat_map}$(iv)$. Indeed, 
		by setting $I=(x_0^4, x_0^3x_1, x_0x_1^3, x_1^4)$, one notices that $I^{\sat} = R$ and so one obtains that $\left[I^{\sat}\right]_4 \neq \left[I\right]_4$ whereas $\FF$ is birational onto its image $\Proj(S)$.
		
\end{example}

	All the rational maps that are considered in specific applications are usually such that $\grade(\nn)\ge 2$.
	So, in practice, we always have $\SSS=\widehat{S}$. Nevertheless, we give an example where $\SSS$ and $\widehat{S}$ are different.
	
	\begin{example} Take again the rational quartic curve $\Cc\subset \PP^3$ and consider the map
	$$
		\FF : \mathcal{C}  \rightarrow  \PP^2, \quad 
		(x_0:x_1:x_2:x_3)  \mapsto  \left(x_0:x_1:x_3\right).			
	$$
	In order to be consistent with our general notation in the section, we now denote the homogeneous coordinate ring of $\Cc$ by 
		$$
		R \cong \frac{\kk[x_0,x_1,x_2,x_3]}
		{\left({x}_{1} {x}_{2}-{x}_{0} {x}_{3},{x}_{2}^{3}-{x}_{1} {x}_{3}^{2},{x}_{0} {x}_{2}^{2}-{x}_{1}^{2} {x}_{3},{x}_{1}^{3}-{x}_{0}^{2} {x}_{2}
			\right)}
		$$
		and consider the ideal $I=\left(x_0,x_1,x_3\right) \subset R$.
%
		
		Observe that $R$ is not Cohen-Macaulay with $\dim(R)=2$ and $\depth(R)=1$. Moreover, by setting  $B=\kk[x_0,x_1,x_2,x_3]$ and $\mm=\left(x_0,x_1,x_2,x_3\right) \subset B$, computations with Macaulay2 \cite{MACAULAY2} 
%
show that
		$$
		\Ext_B^3\big(R, B(-4)\big) \cong \big(B/\mm\big)(1),
		$$
		and so the graded local duality theorem (see e.g.~\cite[Theorem 3.6.19]{BRUNS_HERZOG}) yields the isomorphism
		$$
		\HH_{\mm}^1(R) \cong \big(B/\mm\big)(-1).
		$$
		Since we have $I^{\sat} = R$, 
		it follows that
		$$
		{\left[\HH_{\mm R}^1\left(I^{\sat}\right)\right]}_1\cong{\left[\HH_{\mm}^1\left(I^{\sat}\right)\right]}_1={\left[\HH_{\mm}^1(R)\right]}_1 \cong {\left[\big(B/\mm\big)(-1)\right]}_1\cong\kk\neq 0.
		$$
		Therefore, from \autoref{comparison_S_hat_S_tilde1} we deduce that $\left[\SSS\right]_1 \neq \left[\widehat{S}\right]_1$.
	\end{example}

We are now ready to give the proof of the main result of this section.

\begin{proof}[Proof of \autoref{theorem_factorize_rat_map}]
		From our previous discussions \autoref{isom_multProj_Proj}, we have the following commutative diagram
		\begin{center}		
			\begin{tikzpicture}
			\matrix (m) [matrix of math nodes,row sep=3em,column sep=4em,minimum width=2em, text height=1.5ex, text depth=0.25ex]
			{
				X &   \\
				\Proj(T) & Y. \\
			};
			\path[-stealth]
			(m-1-1) edge node [left]	{$\cong$} (m-2-1)
			;				
			\draw[->,dashed] (m-1-1)--(m-2-2) node [midway,above] {$\FF$};	
			\draw[->,dashed] (m-2-1)--(m-2-2) node [midway,below] {$\FF^{\prime}$};	
			\end{tikzpicture}	
		\end{center}
		The rational map $\FF^{\prime}$ can be given by the tuple $(f_0:f_1:\cdots:f_s)$ because each $f_i \in T$. 
		From a geometrical point of view, here we are embedding $X$ in the ``right'' projective space 
		$$
		\PP^{M} \quad\text{ of dimension }\quad M = \prod_{i=1}^m\binom{r_i+d_i}{r_i}-1,
		$$
		where the $f_i$'s are actually linear forms. 
		Then the rational map $\FF^{\prime}:\Proj(T) \dashrightarrow Y=\Proj(S)$ is induced from the canonical inclusion $S \hookrightarrow T$.
		Since we have the canonical inclusions 
		$
		S \hookrightarrow \SSS \hookrightarrow T
		$
		then $\FF^{\prime}$ is given by the composition of the rational maps 
		$$
		\Proj(T) \dashrightarrow \Proj(\SSS) \dashrightarrow \Proj(S).
		$$
		From the condition $\dim(X)= \dim(Y),$ we have that 
		$$
		\Quot(S) \subset \Quot(\SSS) \subset \Quot(T)
		$$
		are algebraic extensions.
		Therefore, by using the same argument of \autoref{lem_deg_map_field_deg}, we get the equalities $\deg(\FF)=\left[T:S\right]$, $\deg(\mathcal{G})=[T:\SSS]$ and $\deg(\mathcal{H})=[\SSS:S]$.
		
		$(i)$
		First we check that the rational map $\mathcal{H}: \Proj(\SSS) \dashrightarrow Y$ is actually a finite morphism.		
		Since $\SSS$ is a finitely generated $S$-module (\autoref{lem_S_hat_S_tilde}$(ii)$),  we even have that $\SSS$ is integral over $S$.
		By the Incomparability theorem (see e.g.~\cite[Corollary 4.18]{EISEN_COMM}), the inclusion $S \hookrightarrow \SSS$ induces a (well defined everywhere) morphism 
		$$
		\mathcal{H}: \Proj(\SSS) \rightarrow \Proj(S).
		$$
		Indeed, for any $\qqq \subsetneq \SSS_+$ we necessarily have that $\qqq \cap S \subsetneq S_{+}$.
		The finiteness  of $\SSS$ as an $S$-module yields that $\mathcal{H}$ is a finite morphism.	
		
		Next we will prove that $\deg(\mathcal{H})=\deg(\FF)$.
		Let us denote by $\widetilde{X}=\biProj(\Rees(I))$ the blow-up of $X$ along $\BB=V(I)$, which can also be seen as the closure of the graph of $\FF$. 
		We then have the commutative diagram 
		\begin{center}		
			\begin{tikzpicture}
			\matrix (m) [matrix of math nodes,row sep=3em,column sep=4em,minimum width=2em, text height=1.5ex, text depth=0.25ex]
			{
				\widetilde{X} \subset X \times \PP^s &  \\
				X &  Y\subset \PP^s\\
			};			
			\draw[->] (m-1-1)--(m-2-1) node [midway,left] {$\pi_1$};	
			\draw[->,dashed] (m-2-1)--(m-2-2) node [midway,above] {$\FF$};	
			\draw[->] (m-1-1)--(m-2-2) node [midway,above] {$\pi_2$};	
			\end{tikzpicture}	
		\end{center}
		with $\pi_1$ being an isomorphism outside $\BB$ (see e.g.~\cite[Proposition 2.3]{Laurent_Jouanolou_Closed_Image}, \cite[Proposition II.7.13]{HARTSHORNE}). 		

		Let $\xi$ be the generic point of $Y$ and consider the fiber product  $W:=\widetilde{X} \times_{Y} \Spec\left(\OO_{Y,\xi}\right)$.
		Denoting $\eta$ as the generic point of $\widetilde{X}$, since $\pi_2$ is assumed to be generically finite, then we have the isomorphism $\Spec\left(\OO_{\widetilde{X},\eta}\right) \cong W$; this is a classical result, for a detailed proof see \cite[\href{https://stacks.math.columbia.edu/tag/02NV}{Tag 02NV}]{stacks-project}. 
		
		Even though $W$ is just a point, we will consider a convenient (and trivial) affine open cover of it. 
		The scheme $Y$ has an affine open cover given by $Y_j=\Spec\left(S_{(y_j)}\right)$ for $j=0,\ldots,s$.
		The open set $\pi_2^{-1}(Y_j)$ is isomorphic to $\multProj_{R\text{-gr}}\left({\Rees(I)}_{(y_j)}\right)$, where ${\Rees(I)}_{(y_j)}$ denotes the multi-graded $R$-module 
		$$
		{\Rees(I)}_{(y_j)}={\left[{\Rees(I)}_{y_j}\right]}_{\mathbf{*},0}
		$$
		defined by restricting to elements of degree zero in the grading corresponding with $S$.
		Then $W$ can be obtained by glueing the open cover 
		$$
		W_j \;:=\;\multProj_{R\text{-gr}}\left({\Rees(I)}_{(y_j)}\right)\; \times_{\Spec\left(S_{(y_j)}\right)}\; \Spec\left(\OO_{Y,\xi}\right)
		$$
		for $j=0,\ldots,s$.
		
		Fix $0 \le j \le s$.
		Similarly to \autoref{affine_cover_R_gr}, the scheme $\multProj_{R\text{-gr}}\left({\Rees(I)}_{(y_j)}\right)$ has an affine open cover 
		$$ {\left(\Spec\left({\Rees(I)}_{(\vartheta_iy_j)}\right)\right)}_{i=1,\ldots,r},
		$$
		where we are using the similar notation ${\Rees(I)}_{(\vartheta_iy_j)}={\left[{\Rees(I)}_{\vartheta_i y_j}\right]}_{\mathbf{0},0}$.
		Now, $W_j$ is obtained by glueing the affine open cover given by
		\begin{equation}
			\label{affine_cover_Rees_full_grad}
			{\Big(\Spec\left({\Rees(I)}_{(\vartheta_iy_j)} \; \otimes_{S_{(y_j)}}\; \OO_{Y,\xi} \right)\Big)}_{i=1,\ldots,r}.
		\end{equation}
		Since $\OO_{Y,\xi}\cong S_{(0)}$, then we have that the ring ${\Rees(I)}_{(\vartheta_iy_j)} \; \otimes_{S_{(y_j)}}\; \OO_{Y,\xi}$ does not depend on $j$. 
		Therefore we obtain that $W=W_j$. 
				
		Let $K$ be the multiplicative set of homogeneous elements of $S$ and $B$ 	be the localized ring $B=K^{-1}S$.
		We denote by $\mathcal{W}={\left(\mathcal{W}_i\right)}_{i=1,\ldots,r}$ the affine open cover of \autoref{affine_cover_Rees_full_grad}. Since we have the following isomorphisms of multi-graded $\AAA$-modules
		$$
		{\Rees(I)}_{\vartheta_{i_1}\vartheta_{i_2}\cdots\vartheta_{i_t} y_j}  \otimes_{S_{y_j}} B \;\cong\; 				{\Rees(I)}_{\vartheta_{i_1}\vartheta_{i_2}\cdots\vartheta_{i_t}} \otimes_{S} B \qquad \text{ for any } 1\le i_1 <i_2<\cdots<i_t \le r,
		$$
		 the corresponding \v{C}ech complex is given by
		$$
		C^{\bullet}(\mathcal{W}):\qquad0 \rightarrow \bigoplus_i {\Rees(I)}_{\vartheta_i} \otimes_S B \rightarrow \bigoplus_{i<k} {\Rees(I)}_{\vartheta_i\vartheta_k} \otimes_S B \rightarrow \cdots \rightarrow {\Rees(I)}_{\vartheta_1\cdots\vartheta_r} \otimes_S B \rightarrow 0.
		$$
		Using the affine open cover  \autoref{affine_cover_R_gr} of $\multProj_{R\text{-gr}}({\Rees(I)})$, we get the similar \v{C}ech complex
		$$
		C^{\bullet}(\mathcal{U}):\qquad0 \rightarrow \bigoplus_i {\Rees(I)}_{\vartheta_i}  \rightarrow \bigoplus_{i<k} {\Rees(I)}_{\vartheta_i\vartheta_k}  \rightarrow \cdots \rightarrow {\Rees(I)}_{\vartheta_1\cdots\vartheta_r}  \rightarrow 0.
		$$
		
		Since $B$ is flat over $S$, we get the isomorphism of multi-graded $\AAA$-modules
		$$
		\HH^0(C^\bullet(\mathcal{U})) \otimes_S B \;\cong\; \HH^0(C^{\bullet}(\mathcal{W})),
		$$
		and restricting to the multi-degree $\mathbf{0}$ part in $R$, we get the following isomorphisms of graded $S$-modules
		\begin{align*}
			\widehat{S} \otimes_S B &= \HH^{0}\left(\multProj_{R\text{-gr}}({\Rees(I)}), \mathcal{O}_{\multProj_{R\text{-gr}}({\Rees(I)})}\right) \otimes_S B \\
			&\cong {\left[\HH^0(C^\bullet(\mathcal{U}))\right]}_{\mathbf{0}} \otimes_S B \;\cong\; {\left[\HH^0(C^{\bullet}(\mathcal{W}))\right]}_{\mathbf{0}}. 
		\end{align*}
		From the fact that $S \hookrightarrow \widehat{S}$ is an algebraic extension, we have $\Quot(\widehat{S}) = \widehat{S} \otimes_S \Quot(S).$ So,  by restricting to the degree zero part, we get the following isomorphisms of rings
		$$
		\widehat{S}_{(0)} = {\left[\widehat{S} \otimes_S B\right]}_0 \cong {\Big[{\left[\HH^0(C^\bullet(\mathcal{U}))\right]}_{\mathbf{0}}\otimes_S B\Big]}_{0} \cong {\left[ \HH^0(C^{\bullet}(\mathcal{W}))\right]}_{\mathbf{0},0} \cong \HH^0(W,\OO_W) = \OO_{\widetilde{X},\eta}.
		$$
		
		Finally, since $\pi_1$ is a birational morphism and $\SSS_{(0)}=\widehat{S}_{(0)}$ (\autoref{lem_S_hat_S_tilde}$(iv)$), we obtain
		$$
		\deg(\mathcal{H})=\left[\SSS_{(0)}:S_{(0)}\right]=\left[\widehat{S}_{(0)}:S_{(0)}\right] = \left[\OO_{\widetilde{X},\eta}:\OO_{Y,\xi}\right] = \deg(\pi_2) = \deg(\FF).
		$$
		
		$(ii)$ From part $(i)$ we have $\deg(\FF)=\deg(\mathcal{H}).$ Then, the equality $\deg(\FF) = \deg(\mathcal{G})\deg(\mathcal{H})$ gives us that $\deg(\mathcal{G})=1$.
		
		$(iii)$ From the associative formula for multiplicity \cite[Corollary 4.6.9]{BRUNS_HERZOG} and part $(i)$, we get 
		$$
		e(\SSS) = \left[\SSS:S\right]e(S) = \deg(\FF)e(S).
		$$
		
		$(iv)$ We only need prove that assuming the birationality of $\FF$ and that $S$ is integrally closed, then we get $\SSS=S$.
		The equality $\deg(\FF)=\deg(\mathcal{H})=\left[\SSS:S\right]$ and the birationality of $\FF$ imply that 
		$\Quot(\SSS)=\Quot(S)$.
		Therefore we have the following canonical inclusions
		$$
		S \subset \SSS \subset \Quot(\SSS) = \Quot(S),
		$$ 
		and so from the fact that $S$ is integrally closed and that $S \hookrightarrow \SSS$ is an integral extension, we  obtain $\SSS=S$.
\end{proof}

We end this subsection by providing a relation between the $j$-multiplicity of an ideal and the multiplicity of the corresponding saturated special fiber ring.
The $j$-multiplicity of an ideal was introduced in \cite{ACHILLES_MANARESI_J_MULT}. It serves as a generalization of the Hilbert-Samuel multiplicity, and has applications in intersection theory (see e.g.~\cite{FLENNER_O_CARROLL_VOGEL}).

Let $A$ be a standard graded $\kk$-algebra of dimension $\delta+1$ which is an integral domain.
Let $\nnn$ be its maximal irrelevant ideal $\nnn=A_{+}$.
For a non necessarily $\nnn$-primary ideal $J\subset A$ its $j$-multiplicity is given by
$$
j(J) := \delta
!\lim\limits_{n\rightarrow\infty} \frac{\dim_{\kk}\left(\HH_{\nnn}^0\left(J^n/J^{n+1}\right)\right)}{n^\delta}.
$$

\begin{lemma}
	\label{lem_j_mult}
	Let $J\subset A$ be a homogeneous ideal equally generated in degree $d$.
	Suppose $J$ has maximal analytic spread $\ell(J)=\delta+1$.
	Then, we have the equality
	$$
	j(J) = d\cdot e(D),
	$$
	where $D = \bigoplus_{n=0}^{\infty} {\left[{(J^n)}^{\sat}\right]}_{nd}$  is the saturated special fiber ring of $J$.
	\begin{proof}
		We consider the associated dominant rational map $\mathcal{G}: \Proj\left(A\right) \dashrightarrow \Proj\left(\kk[J_d]\right)$, that satisfies $\dim(A)=\dim(\kk[J_d])$ because $\ell(J)=\delta+1$.
		From \cite[Theorem 5.3]{KPU_blowup_fibers} and \autoref{theorem_factorize_rat_map}$(iii)$ we obtain 
		$$
		j(J)=d\cdot\deg(\mathcal{G})\cdot e\left(\kk[J_d]\right) \quad\text{ and }\quad e(D)=\deg(\mathcal{G})\cdot e\left(\kk[J_d]\right),
		$$ 
		respectively.
		So the result follows by comparing both equations.
	\end{proof}
\end{lemma}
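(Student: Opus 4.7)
The plan is to reduce the claim to the main result of this section, \autoref{theorem_factorize_rat_map}$(iii)$, combined with a known formula of Kustin--Polini--Ulrich that expresses the $j$-multiplicity as a multiplicity of a special fiber ring times a degree. The key observation is that the hypothesis $\ell(J)=\delta+1=\dim(A)$ is precisely what guarantees that the rational map on $\Proj(A)$ induced by a minimal generating set of $J_d$ has image of the same dimension, so that a meaningful notion of degree of the map is available and the two results can be applied simultaneously.

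More concretely, I would first associate to $J$ the dominant rational map
$$
\mathcal{G}: \Proj(A) \dashrightarrow \Proj(\kk[J_d])
$$
induced by a $\kk$-basis of $J_d$. By definition of analytic spread, $\dim(\kk[J_d])=\ell(J)=\delta+1=\dim(A)$, so $\mathcal{G}$ is dominant between two integral projective schemes of the same dimension; hence $\deg(\mathcal{G})$ is a well-defined positive integer. At this point \autoref{theorem_factorize_rat_map}$(iii)$ applies in the (single-graded) situation of the ambient ring $A$, with base ideal $J$ and saturated special fiber ring equal to $D=\bigoplus_{n\ge 0}[(J^n)^{\sat}]_{nd}$, and it yields
$$
e(D) \;=\; \deg(\mathcal{G})\cdot e\bigl(\kk[J_d]\bigr).
$$

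Next I invoke \cite[Theorem~5.3]{KPU_blowup_fibers}, which under the maximal analytic spread hypothesis expresses the $j$-multiplicity of $J$ as
$$
j(J) \;=\; d\cdot \deg(\mathcal{G})\cdot e\bigl(\kk[J_d]\bigr).
$$
Substituting the preceding equality then gives $j(J)=d\cdot e(D)$, as claimed. The whole argument is essentially a combination of two black-box results, so the ``hard part'' is not a calculation but rather a careful bookkeeping check that the hypotheses of \autoref{theorem_factorize_rat_map}$(iii)$ (domination, equality of dimensions of source and target, integrality) and of \cite[Theorem~5.3]{KPU_blowup_fibers} (maximal analytic spread, equigeneration in degree $d$) are simultaneously satisfied; both reduce to the single assumption $\ell(J)=\delta+1$ together with the ambient integrality of $A$.
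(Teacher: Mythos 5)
Your proposal is correct and follows essentially the same route as the paper: you form the dominant rational map $\mathcal{G}:\Proj(A)\dashrightarrow \Proj(\kk[J_d])$, use $\ell(J)=\delta+1$ to get equality of dimensions, and combine \autoref{theorem_factorize_rat_map}$(iii)$ with \cite[Theorem 5.3]{KPU_blowup_fibers} exactly as in the paper's proof. Nothing is missing.
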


As a direct consequence of this lemma we obtain a refined version of \cite[Theorem 3.1$(iii)$]{JEFFRIES_MONTANO_VARBARO}.

\begin{corollary}
		Let $J\subset A$ be a homogeneous ideal equally generated in degree $d$.
		Suppose $J$ has maximal analytic spread $\ell(J)=\delta+1$.
		If ${\left[{(J^n)}^{\sat}\right]}_{nd}={\left[J^n\right]}_{nd}$ for all $n \gg 0$, then $$
		j(J)=d\cdot e(\kk[J_d]).
		$$
\end{corollary}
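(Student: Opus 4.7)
The plan is to derive the statement as a direct consequence of the preceding lemma (\autoref{lem_j_mult}) together with the elementary observation that two graded algebras which agree in all sufficiently high degrees have the same Hilbert multiplicity.

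First I would apply \autoref{lem_j_mult} to the ideal $J$. Since by hypothesis $J$ is equally generated in degree $d$ and has maximal analytic spread $\ell(J)=\delta+1$, the lemma gives the identity
\[
j(J) \;=\; d\cdot e(D), \qquad D \;=\; \bigoplus_{n=0}^{\infty} \bigl[{(J^n)}^{\sat}\bigr]_{nd}.
\]
Thus it suffices to show that $e(D) = e(\kk[J_d])$ under the additional saturation hypothesis.

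Next I would compare the two standard graded $\kk$-algebras $D$ and $\kk[J_d] = \bigoplus_{n\geq 0}[J^n]_{nd}$. The canonical inclusion $[J^n]_{nd}\hookrightarrow [(J^n)^{\sat}]_{nd}$ makes $\kk[J_d]$ a graded subalgebra of $D$. The hypothesis ${[(J^n)^{\sat}]}_{nd}={[J^n]}_{nd}$ for all $n\gg 0$ says exactly that this inclusion is an equality in every sufficiently large degree. Since the Hilbert polynomial of a finitely generated graded module depends only on its values in large degree, and the multiplicity is read off from the leading coefficient of that polynomial, we conclude $e(D)=e(\kk[J_d])$. (One should check in passing that $\kk[J_d]$ and $D$ have the same Krull dimension $\delta+1$; this is immediate because $\dim\kk[J_d]=\ell(J)=\delta+1$ by hypothesis, and $D$ is a finitely generated $\kk[J_d]$-module by \autoref{lem_S_hat_S_tilde}$(ii)$, hence has the same dimension.)

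Combining the two steps yields $j(J) = d\cdot e(D) = d\cdot e(\kk[J_d])$, as required. There is no real obstacle here; the only point that deserves a sentence of justification is the passage from ``agreement in high degrees'' to equality of multiplicities, which is a standard fact about Hilbert functions of finitely generated graded modules over a standard graded $\kk$-algebra and is why invoking \autoref{lem_S_hat_S_tilde}$(ii)$ is useful.
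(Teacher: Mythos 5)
Your proof is correct and is exactly the argument the paper intends when it calls the corollary a direct consequence of \autoref{lem_j_mult}: apply the lemma to get $j(J)=d\cdot e(D)$, and then note that since $D$ and $\kk[J_d]$ coincide in all sufficiently high degrees their Hilbert functions, and hence their multiplicities, agree. The dimension remark via finite generation of $D$ is harmless but not even needed, since agreement of the Hilbert functions in large degrees already forces equality of the Hilbert polynomials.
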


\begin{remark}
	To emphasize the relevance of \autoref{theorem_factorize_rat_map},  we mention that recently the multiplicity of the saturated special fiber ring has been computed explicitly for some classes of rational maps, illustrating the fact that \autoref{theorem_factorize_rat_map} provides a much better understanding and control of the degree and birationality of those rational maps.
	\begin{itemize}
		\item In \cite{SAT_FIB_PERF_HT_2}, the case of rational maps with \textit{perfect base ideals of height two} was treated.
		\item In \cite{MULT_GOR_HT_3}, the case of rational maps with \textit{Gorenstein base ideals of height three} was treated.
	\end{itemize}
\end{remark}

\subsection{Formula for the degree of multi-graded rational maps}

In this subsection, we prove a new formula that relates the degree of $\FF$ with the multiplicity of the $S$-module ${\left[\HH_{\nn}^1(\Rees(I))\right]}_{\mathbf{0}}$ and the degree of the image. This result will be our main tool for making specific computations. To state it, we will need the following additional notation: for any finitely generated graded $S$-module $N$, the $(\delta+1)$-th multiplicity is defined by (see e.g.~\cite[\S 4.7]{BRUNS_HERZOG})
$$
e_{\delta+1}(N) = \begin{cases}
	e(N) \qquad \text{if } \dim(N) = \delta +1,\\
	0 \quad\;\;\qquad  \text{otherwise.}
\end{cases}
$$

\begin{corollary}
		\label{NEW_CRITERION}
		Let $\FF: X = X_1 \times X_2 \times \cdots \times X_m \dashrightarrow Y$ be a dominant rational map. 
		If $\dim(Y)=\delta$, then the degree of $\FF$ can be computed by 
		$$
		\deg_{\PP^s}(Y)(\deg(\FF)-1) =e_{\delta+1}\Big({\left[\HH_{\nn}^1(\Rees(I))\right]}_{\mathbf{0}}\Big) =  \delta!\lim\limits_{n\rightarrow \infty} \frac{\dim_{\kk}\big({\left[\HH_{\nn}^1(I^n)\right]}_{n\cdot\mathbf{d}}\big)}{n^\delta}.
		$$
		In particular, we have that $\FF$ is birational if and only if $\dim_S\left({\left[\HH_{\nn}^1\left(\Rees(I)\right)\right]}_{\mathbf{0}}\right) < \delta+1$.
		\begin{proof}
			From \autoref{relation_local_sheaf_cohom_zero} we have the exact sequence 
			$$
			0 \rightarrow {\left[\Rees(I)\right]}_{\mathbf{0}} \rightarrow \HH^{0}\left(\multProj_{R\text{-gr}}({\Rees(I)}), \mathcal{O}_{\multProj_{R\text{-gr}}({\Rees(I)})}\right) \rightarrow {\left[\HH_{\nn}^1(\Rees(I))\right]}_{\mathbf{0}} \rightarrow 0			
			$$
			which by using our previous notations can be written as 
			$$
			0 \rightarrow S \rightarrow \widehat{S} \rightarrow {\left[\HH_{\nn}^1(\Rees(I))\right]}_{\mathbf{0}} \rightarrow 0.			
			$$
			We clearly have  $e_{\delta+1}(S)=\deg_{\PP^s}(Y)$, then it follows that
			\begin{align*}
				e_{\delta+1}(\widehat{S}) &= e_{\delta+1}(\SSS)  \qquad\qquad\qquad\text{(by \autoref{lem_S_hat_S_tilde}$(iii)$)}\\
				&= \deg(\FF)\cdot e_{\delta+1}(S)  \qquad\text{\,(by \autoref{theorem_factorize_rat_map}$(iii)$)}\\
				&= \deg(\FF)\cdot\deg_{\PP^s}(Y). 
			\end{align*}
			Therefore, the previous exact sequence yields the equality
			$$
			e_{\delta+1}\Big({\left[\HH_{\nn}^1(\Rees(I))\right]}_{\mathbf{0}}\Big) = e_{\delta+1}(\widehat{S}) - e_{\delta+1}(S) = \deg_{\PP^s}(Y)(\deg(\FF)-1),
			$$
			as claimed.
		\end{proof}
\end{corollary}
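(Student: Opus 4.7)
The plan is to extract the formula from a short exact sequence of graded $S$-modules obtained by restricting a local cohomology sequence to the multi-degree $\mathbf{0}$ part in the $R$-grading, and then apply the multiplicity identities already established.

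First, I would apply \autoref{relation_local_sheaf_cohom_zero} to the multi-graded $R$-module $\Rees(I)$ and take the $\mathbf{0}$-th multi-graded component in $R$. The $\HH_{\nn}^0$ term vanishes since $\Rees(I)$ embeds in the torsion-free module $R[t]$, so one obtains the short exact sequence of finitely generated graded $S$-modules
$$
0 \to S \to \widehat{S} \to {\left[\HH_{\nn}^1(\Rees(I))\right]}_{\mathbf{0}} \to 0,
$$
where finite generation over $S$ is guaranteed by \autoref{prop_finite_dim_Hn_Rees}.

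Second, I would compute the $(\delta+1)$-multiplicity of each term. The classical projective degree formula gives $e_{\delta+1}(S)=\deg_{\PP^s}(Y)$ (using that $\dim(S)=\dim(Y)+1=\delta+1$). By \autoref{lem_S_hat_S_tilde}$(iii)$, $e_{\delta+1}(\widehat{S})=e_{\delta+1}(\SSS)$, and by \autoref{theorem_factorize_rat_map}$(iii)$, this equals $\deg(\FF)\cdot e(S)=\deg(\FF)\cdot \deg_{\PP^s}(Y)$. Then the additivity of $e_{\delta+1}$ along short exact sequences gives
$$
e_{\delta+1}\Big({\left[\HH_{\nn}^1(\Rees(I))\right]}_{\mathbf{0}}\Big) = e_{\delta+1}(\widehat{S}) - e_{\delta+1}(S) = \deg_{\PP^s}(Y)\bigl(\deg(\FF)-1\bigr).
$$
The limit expression on the right of the statement is then simply the definition of this multiplicity as $\delta!$ times the leading coefficient of the Hilbert function, combined with the identification ${\left[\HH_{\nn}^1(\Rees(I))\right]}_{\mathbf{0},n}={\left[\HH_{\nn}^1(I^n)\right]}_{n\cdot\dd}$ coming from the multi-grading of $\Rees(I)$.

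Finally, the birationality characterization is essentially immediate: $\FF$ is birational iff $\deg(\FF)=1$ iff the right-hand side of the displayed formula vanishes iff the $(\delta+1)$-multiplicity of ${\left[\HH_{\nn}^1(\Rees(I))\right]}_{\mathbf{0}}$ is zero, which by definition of $e_{\delta+1}$ occurs exactly when the $S$-module has dimension strictly less than $\delta+1$. The only subtle point I anticipate is the bookkeeping around the relationship between $\SSS$ and $\widehat{S}$ with respect to dimension and multiplicity (so that $e_{\delta+1}(\widehat{S})$ is well-defined and equals $e(\SSS)$ rather than just an inequality), but this is handled once we know $\SSS \subset \widehat{S}$ is an equality in large degree and $\SSS$ is integral over $S$.
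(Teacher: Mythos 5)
Your proposal is correct and follows essentially the same route as the paper: the short exact sequence $0 \to S \to \widehat{S} \to {\left[\HH_{\nn}^1(\Rees(I))\right]}_{\mathbf{0}} \to 0$ from \autoref{relation_local_sheaf_cohom_zero}, the identities $e_{\delta+1}(\widehat{S})=e_{\delta+1}(\SSS)=\deg(\FF)\cdot\deg_{\PP^s}(Y)$ via \autoref{lem_S_hat_S_tilde}$(iii)$ and \autoref{theorem_factorize_rat_map}$(iii)$, and additivity of $e_{\delta+1}$. Your explicit remarks on the vanishing of $\HH_{\nn}^0(\Rees(I))$ and on the birationality criterion are minor elaborations of points the paper leaves implicit.
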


\begin{remark}
		Let $J$ be an ideal in the polynomial ring $\kk[x_1,\ldots,x_p],$ and $\mm$ the maximal irrelevant ideal $(x_1,\ldots,x_p)$. In  \cite{Cutkosky_Ha_Asymp}, it was shown that the limit 
		$$
		\lim\limits_{n \rightarrow \infty} \frac{\lambda(\HH_{\mm}^1(J^n))}{n^p} = 		\lim\limits_{n \rightarrow \infty} \frac{\lambda(\HH_{\mm}^0(R/J^n))}{n^p}
		$$
		always exists under the assumption that $\kk$ is a field of characteristic zero, but, interestingly, it is proved that it is not necessarily a rational number.
		Later, in \cite{Herzog_hilb_polynom} it was obtained that when $J$ is a monomial ideal this limit is a rational number. 
		From the previous \autoref{NEW_CRITERION} we have that a similar limit obtained by restricting to certain graded strands, is always rational and also can give  valuable information for a (multi-graded) rational map. 
\end{remark}

\section{Rational maps with zero-dimensional base locus}

In this section we restrict ourselves to the case where the base locus $\BB=V(I)$ has dimension zero, i.e. that $\BB$ is finite over $\kk$.
In this case, we obtain four main different lines of results, that we gather in four subsections.
Firstly, in
 \autoref{subsection_deg_formula}, we provide an algebraic proof of the degree formula in the general multi-graded case. 
Then, in \autoref{subsection_Sym_algebra}, we derive bounds for the degree of a rational map from \autoref{NEW_CRITERION}, in terms of the symmetric algebra.
Thirdly, in \autoref{subsection_mult_proj_spaces}, we apply our methods in the case of rational maps defined over multi-projective spaces.
And we conclude by providing an upper bound for the degree of a single-graded rational map in terms of certain values of the Hilbert function of the base ideal in \autoref{subsection_single_grad_rat_maps}.

We shall see that these upper bounds are sharp in some cases, and also that we obtain new effective birationality criteria under certain conditions.

\subsection{The degree formula}
\label{subsection_deg_formula}

We give a formula for the degree of a multi-graded rational map, which depends on the degrees of the source and the image, and the multiplicity of the base points. 
This known formula can also be obtained with more geometric techniques (see \cite[Section 4.4]{FULTON_INTERSECTION_THEORY}).
It can be seen as a generalization of the same result in the single-graded case (see \cite[Theorem 2.5]{Laurent_Jouanolou_Closed_Image} and \cite[Theorem 6.6]{Sim_Ulr_Vasc_mult}). Hereafter we use the same notations and conventions of \autoref{subsection_prelim}. We begin with two preliminary results.

\begin{proposition}\label{prop:dimvanish}
	Assume that $\FF:X=X_1\times\cdots\times X_m \dashrightarrow Y$ is generically finite. Then, we have that  $\dim_S\big({\left[\HH_{\nn}^i(\Rees(I))\right]}_{\mathbf{0}}\big) < \dim(S)$  for all $i \ge 2$.
	\begin{proof}
			We have defined $\multProj_{R\text{-gr}}(\Rees(I))$ by considering $\Rees(I)$ as a multi-graded $R$-algebra, and so we have the following morphisms
			\begin{align*}
				\pi_2:&\; \biProj(\Rees(I)) \subset X \times \PP^s \longrightarrow \Proj(S) \subset \PP^s\\
				v:&\; \multProj_{R\text{-gr}}(\Rees(I)) \subset X \times \mathbb{A}^{s+1} \longrightarrow\Spec(S) \subset \mathbb{A}^{s+1}
			\end{align*}
			where both $\pi_2$ and $v$ are determined by the inclusion $S=\kk[\mathbf{y}]/\mathfrak{b} \hookrightarrow \Rees(I)$ that sends $y_i$ into $f_it$, and the only difference  consists on whether we take into account the grading in $\mathbf{y}$ or not.
			Therefore, we have that $v$ is also generically finite, and there exists some $0 \neq L \in S$ for which the morphism
			$$
			v_L : \multProj_{R\text{-gr}}({\Rees(I)}_L) \rightarrow \Spec(S_L)
			$$
			is finite (see \cite[Exercise II.3.7]{HARTSHORNE}).
			Thus, it follows that $\multProj_{R\text{-gr}}({\Rees(I)}_L)$ is affine (see \cite[Exercise II.5.17]{HARTSHORNE}).
			
			From the vanishing of sheaf cohomology (see  \cite[III.3]{HARTSHORNE}) and \autoref{relation_local_sheaf_cohom_positive}, we get 	
			$$
			{\big({\left[\HH_{\nn}^i(\Rees(I))\right]}_{\mathbf{0}}\big)}_{L} \cong {\left[{\HH_{\nn}^i\big({\Rees(I)}_{L}\big)}\right]}_{\mathbf{0}} \cong \HH^{i-1}\left(\multProj_{R\text{-gr}}({\Rees(I)}_L), \mathcal{O}_{\multProj_{R\text{-gr}}({\Rees(I)}_L)}\right) = 0
			$$
			for all $i\ge 2$.
			Since ${\left[\HH_{\nn}^i(\Rees(I))\right]}_{\mathbf{0}}$ is a finitely generated graded $S$-module then it is annihilated by some power of $L$, and the claimed result follows.
	\end{proof}
\end{proposition}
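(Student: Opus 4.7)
The plan is to localize on the target $\Spec(S)$ to reduce to the affine situation, and then invoke the vanishing of higher sheaf cohomology on affine schemes, translated back to local cohomology via \autoref{relation_local_sheaf_cohom_positive}. Concretely, I would first compare the two maps coming from the inclusion $S=\kk[\yy]/\mathfrak{b}\hookrightarrow \Rees(I)$: the rational map $\pi_2: \biProj(\Rees(I))\dashrightarrow \Proj(S)$ obtained by keeping the $\yy$-grading, and the morphism $v:\multProj_{R\text{-gr}}(\Rees(I))\to \Spec(S)$ obtained by forgetting the $\yy$-grading. Since these two maps are given by the same ring inclusion, the generic finiteness of $\FF$ (equivalently of $\pi_2$) forces $v$ to be generically finite as well, because a fiber of $v$ is essentially the affine cone over the corresponding fiber of $\pi_2$.

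Next, I would exploit the standard fact that a generically finite morphism becomes finite after localization: there exists $0\neq L\in S$ such that the restriction $v_L:\multProj_{R\text{-gr}}(\Rees(I)_L)\to \Spec(S_L)$ is a finite morphism. Since $\Spec(S_L)$ is affine and $v_L$ is finite, the source $\multProj_{R\text{-gr}}(\Rees(I)_L)$ is itself affine (this is a standard consequence of the affineness criterion for finite morphisms, e.g.\ \cite[Exercise II.5.17]{HARTSHORNE}).

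Now I would invoke \autoref{relation_local_sheaf_cohom_positive} and the fact that higher sheaf cohomology vanishes on an affine scheme. Localizing the multi-graded $S$-module ${\left[\HH_{\nn}^i(\Rees(I))\right]}_{\mathbf{0}}$ at $L$ commutes with taking local cohomology and with restriction to the multi-degree $\mathbf{0}$ part in $R$, so for $i\ge 2$ one gets
\[
{\bigl({\left[\HH_{\nn}^i(\Rees(I))\right]}_{\mathbf{0}}\bigr)}_{L}
\;\cong\; {\left[\HH_{\nn}^i(\Rees(I)_L)\right]}_{\mathbf{0}}
\;\cong\; \HH^{i-1}\!\bigl(\multProj_{R\text{-gr}}(\Rees(I)_L),\mathcal{O}\bigr)=0.
\]
Finally, I would conclude by finite generation: by \autoref{prop_finite_dim_Hn_Rees}$(i)$, ${\left[\HH_{\nn}^i(\Rees(I))\right]}_{\mathbf{0}}$ is a finitely generated graded $S$-module, and if it is killed after inverting $L$ then it is annihilated by some power of $L$. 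Hence its support is contained in $V(L)\subsetneq \Spec(S)$, giving $\dim_S\big({\left[\HH_{\nn}^i(\Rees(I))\right]}_{\mathbf{0}}\big) < \dim(S)$.

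The main subtlety I expect is the first step, namely justifying that generic finiteness of the $\Proj$-level map $\pi_2$ transfers to generic finiteness of the affine-level map $v$. Once this is clear (it is essentially the statement that removing the $\yy$-grading only adds a $1$-dimensional cone direction that is already matched on both source and target, so the relative fiber dimension is preserved), the rest is a clean localization argument plus cohomological vanishing on affine schemes.
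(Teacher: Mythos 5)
Your proposal is correct and follows essentially the same route as the paper's proof: deduce generic finiteness of the affine-level map $v$ from that of $\pi_2$, localize at some $0\neq L\in S$ to make $v_L$ finite and the source affine, use vanishing of higher sheaf cohomology together with \autoref{relation_local_sheaf_cohom_positive}, and conclude by the finite generation from \autoref{prop_finite_dim_Hn_Rees}$(i)$ that the module is killed by a power of $L$. The extra remark you make about fibers of $v$ being cones over fibers of $\pi_2$ is a reasonable way to justify the step the paper states without elaboration.
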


We define the degree of $X$ as the degree of its corresponding projectively embedded variety in $\PP^N$ by means of the Segre embedding. We have the following relation between the degree of $X$ and the degrees of the projective varieties $X_i \subset \PP^{r_i}$, $i=1,\ldots,m$.  

\begin{lemma}
	\label{lem_degree_Segre}
	The degree of $X=X_1\times\cdots\times X_m$ can be computed as
	$$
	\deg_{\PP^N}(X) = \frac{\delta!}{\delta_1!\delta_2!\cdots \delta_m!} \deg_{\PP^{r_1}}(X_1)\deg_{\PP^{r_2}}(X_2)\cdots \deg_{\PP^{r_m}}(X_m).
	$$
	\begin{proof}
		Since  the homogeneous coordinate ring of the image of $X$ in the Segre embedding is given by $R^{(\Delta)}$, then we have the following equality 
		$$
		P_{R^{(\Delta)}}(t) = P_{A_1}(t)P_{A_2}(t)\cdots P_{A_m}(t)
		$$
		between the Hilbert polynomials of the standard graded $\kk$-algebras $A_1,\ldots,A_m$ and $R^{(\Delta)}$.
		By comparing the leading terms of both sides of the equation we get the claimed result.
	\end{proof}
\end{lemma}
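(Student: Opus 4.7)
The plan is to follow the hint in the author's one-line proof and simply compare leading coefficients of Hilbert polynomials. First, I would recall why the identity $P_{R^{(\Delta)}}(t) = P_{A_1}(t) \cdots P_{A_m}(t)$ holds. By the definition of the multi-Veronese subring associated to $\Delta = (1,\ldots,1)$, we have
$$
R^{(\Delta)}_t \;=\; R_{(t,t,\ldots,t)} \;=\; (A_1)_t \otimes_{\kk} (A_2)_t \otimes_{\kk} \cdots \otimes_{\kk} (A_m)_t,
$$
so taking $\kk$-dimensions yields $\dim_{\kk} R^{(\Delta)}_t = \prod_{i=1}^m \dim_{\kk} (A_i)_t$ for every $t \ge 0$. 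For $t \gg 0$ the Hilbert functions agree with the Hilbert polynomials, which gives the polynomial identity stated above.

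Next I would extract leading terms. For each $i$, the standard graded $\kk$-algebra $A_i$ is the homogeneous coordinate ring of $X_i \subset \PP^{r_i}$, so
$$
P_{A_i}(t) \;=\; \frac{\deg_{\PP^{r_i}}(X_i)}{\delta_i!}\, t^{\delta_i} + (\text{lower order terms}).
$$
On the other hand, under the isomorphism $X \cong \Proj(R^{(\Delta)})$ coming from \autoref{isom_multProj_Proj}, the algebra $R^{(\Delta)}$ is the homogeneous coordinate ring of the Segre image of $X$ inside $\PP^N$, so
$$
P_{R^{(\Delta)}}(t) \;=\; \frac{\deg_{\PP^{N}}(X)}{\delta!}\, t^{\delta} + (\text{lower order terms}).
$$

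Finally I would equate the leading coefficients on both sides of $P_{R^{(\Delta)}}(t) = \prod_{i=1}^m P_{A_i}(t)$. The degree of the product polynomial is $\delta_1 + \cdots + \delta_m = \delta$, confirming that $\dim(X) = \delta$, and the leading coefficient of the product is $\prod_{i=1}^m \deg_{\PP^{r_i}}(X_i)/\delta_i!$. Equating with $\deg_{\PP^N}(X)/\delta!$ and clearing the factorials yields the desired formula. There is no real obstacle here; the only things worth checking carefully are that the tensor-product identity for the graded pieces of $R$ is valid (true because tensor products over a field are exact and respect dimension) and that the Segre embedding really does produce $R^{(\Delta)}$ as its homogeneous coordinate ring, which is the content of \autoref{isom_multProj_Proj}.
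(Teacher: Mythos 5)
Your proposal is correct and follows exactly the same route as the paper's proof: it establishes the identity $P_{R^{(\Delta)}}(t)=P_{A_1}(t)\cdots P_{A_m}(t)$ via the tensor decomposition of the graded pieces of the Segre product and then compares leading coefficients. You have merely written out the details that the paper leaves implicit, so there is nothing to add.
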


Under the present condition $\dim(\BB)=0$, we define the multiplicity of $\BB$ in $X$ by the following formula
\begin{equation}
\label{mult_base_points}
e(\BB) := \delta!\lim\limits_{n\rightarrow \infty} \frac{\dim_{\kk}\Big(\HH^0\Big(X, \mathcal{O}_X/(I^n)^\sim\Big)\Big)}{n^\delta}.
\end{equation}
Since we have the equalities 
\begin{align*}
\dim_{\kk}\Big(\HH^0\Big(X, \mathcal{O}_X/(I^n)^\sim\Big)\Big)  &= \sum_{\pp \in \BB} \dim_{\kk}\Big({\big(\mathcal{O}_X/(I^n)^\sim\big)}_{\pp}\Big) \\
&=\sum_{\pp \in \BB} \left[\kk(\pp):\kk\right]\cdot \text{length}_{\mathcal{O}_{X,\pp}}\Big({\big(\mathcal{O}_X/(I^n)^\sim\big)}_{\pp}\Big) \\
&= \sum_{\pp \in \BB} \left[\kk(\pp):\kk\right]\cdot \text{length}_{R_{\pp}}\Big(R_{\pp}/I_{\pp}^n\Big),
\end{align*}
the expression $\dim_{\kk}\big(\HH^0\big(X, \mathcal{O}_X/(I^n)^\sim\big)\big)$ becomes a polynomial  for $n\gg0$.
Also,  we can compute \autoref{mult_base_points} with the following equation
$$
e(\BB) = \sum_{\pp \in \BB} \left[\kk(\pp):\kk\right] \cdot e_{I_{\pp}}(R_{\pp}),
$$
where $e_{I_{\pp}}(R_{\pp})$ denotes the Hilbert-Samuel multiplicity of the local ring $R_{\pp}$ with respect to the $\pp R_{\pp}$-primary ideal $I_{\pp}$ (see \cite[Section 4.5]{BRUNS_HERZOG}).

The degree of the base locus $\BB=V(I)$ is defined in a similar way to its multiplicity \autoref{mult_base_points}.
When $\dim(\BB)=0, \ \deg(\BB)$ is given by the formula
\begin{align*}
\deg(\BB) := 
\dim_{\kk}\Big(\HH^0\Big(X, \mathcal{O}_X/I^\sim\Big)\Big)  &= \sum_{\pp \in \BB} \dim_{\kk}\Big({\big(\mathcal{O}_X/I^\sim\big)}_{\pp}\Big) \\
&=\sum_{\pp \in \BB} \left[\kk(\pp):\kk\right]\cdot \text{length}_{\mathcal{O}_{X,\pp}}\Big({\big(\mathcal{O}_X/I^\sim\big)}_{\pp}\Big) \\
&= \sum_{\pp \in \BB} \left[\kk(\pp):\kk\right]\cdot \text{length}_{R_{\pp}}\Big(R_{\pp}/I_{\pp}\Big).
\end{align*}

The theorem below provides a new algebraic proof of the degree formula for a multi-graded rational map with finitely many base points.

\begin{theorem}
	\label{thm_degree_formula_base_points}
	Let $\FF: X = X_1 \times X_2 \times \cdots \times X_m \dashrightarrow Y$ be a dominant rational  map. 
	If $\dim(Y)=\delta$ and $\dim(\BB)=0$, then 
	$$
	d_1^{\delta_1}\cdots d_m^{\delta_m}\deg_{\PP^N}(X) = \deg_{\PP^s}(Y)\deg(\FF) + e(\BB), 
	$$
	or equivalently
	$$
	d_1^{\delta_1}\cdots d_m^{\delta_m} \frac{\delta!}{\delta_1!\cdots \delta_m!} \deg_{\PP^{r_1}}(X_1)\cdots \deg_{\PP^{r_m}}(X_m) = \deg_{\PP^s}(Y)\deg(\FF) + e(\BB).
	$$
	\begin{proof}
						For $n\ge 1$ we have the exact sequence of sheaves
		$$
		0\rightarrow {(I^n)}^\sim(n\cdot\mathbf{d}) \rightarrow \mathcal{O}_X(n\cdot\mathbf{d}) \rightarrow \frac{\mathcal{O}_X}{{(I^n)}^\sim}(n\cdot\mathbf{d}) \rightarrow 0,
		$$
		that gives us the following equation relating Euler characteristics
		$$
		\chi\big(X,{(I^n)}^\sim(n\cdot\mathbf{d})\big) + \chi\Big(X, \frac{\mathcal{O}_X}{{(I^n)}^\sim}(n\cdot\mathbf{d})\Big) = \chi(X, \mathcal{O}_X(n\cdot\mathbf{d})).
		$$
		
		The term $\chi(X, \mathcal{O}_X(n\cdot\mathbf{d}))$ for sufficiently large $n$ becomes
		$$
		\chi(X, \mathcal{O}_X(n\cdot\mathbf{d})) = \dim_{\kk}\big( \HH^0(X, \mathcal{O}_X(n\cdot\mathbf{d}))\big) = \dim_{\kk}(R_{n\cdot\mathbf{d}}) = \frac{d_1^{\delta_1}\cdots d_m^{\delta_m}\deg_{\PP^N}(X)}{\delta!}n^\delta+\text{lower terms}				
		$$
		the Hilbert polynomial of the standard graded $\kk$-algebra $T (=R^{(\mathbf{d})})$ (recall that $\HH^i(X, \mathcal{O}_X(n\cdot\mathbf{d}))=0$ for $i\ge 1$ and $n\gg0$; see \cite[Theorem 1.6]{HYRY_MULTIGRAD}).
		
		Since $\dim(\BB)=0$, the summand $\chi\Big(X, \frac{\mathcal{O}_X}{{(I^n)}^\sim}(n\cdot\mathbf{d})\Big)$ for all $n\gg0$ is a polynomial 
		$$
		\chi\Big(X, \frac{\mathcal{O}_X}{{(I^n)}^\sim}(n\cdot\mathbf{d})\Big)=\dim_{\kk}\Big(\HH^0\Big(X, \frac{\mathcal{O}_X}{{(I^n)}^\sim}\Big)\Big) = \frac{e(\BB)}{\delta!}n^\delta + \text{lower terms}
		$$
		whose leading coefficient is equal to the multiplicity of the base points.
		
		We clearly have that $\FF$ is a generically finite map, so \autoref{prop:dimvanish} yields that for any $i\ge 1$  and $n\gg 0$, the expression
		$$
		\dim_{\kk}\big(\HH^i(X,{(I^n)}^\sim(n\cdot\mathbf{d}))\big)=\dim_{\kk}\big( {\left[\HH_{\nn}^{i+1}(\Rees(I))\right]}_{\mathbf{0},n} \big)
		$$
		becomes a polynomial of degree strictly less than $\delta$.		
		This implies that the leading coefficient of the polynomial determined by 
		$
		\chi\big(X,{(I^n)}^\sim(n\cdot\mathbf{d})\big)
		$
		coincides with the one of the polynomial determined by 
		$$
		\dim_{\kk}\big(\HH^0(X,{(I^n)}^\sim(n\cdot\mathbf{d}))\big).
		$$		
		Therefore, from \autoref{theorem_factorize_rat_map}$(iii)$, for $n\gg0$ the function  $\chi\big(X,{(I^n)}^\sim(n\cdot\mathbf{d})\big)$ is also a polynomial that has the form 
		$$
		\chi\big(X,{(I^n)}^\sim(n\cdot\mathbf{d})\big) = \frac{\deg_{\PP^s}(Y)\deg(\FF)}{\delta!}n^\delta+\text{lower terms}.
		$$
		Finally, comparing the leading coefficients of these polynomials, the equation  
		$$
		d_1^{\delta_1}\cdots d_m^{\delta_m}\deg_{\PP^N}(X) = \deg_{\PP^s}(Y)\deg(\FF) + e(\BB)
		$$
		follows.
		The other formula is equivalent from \autoref{lem_degree_Segre}.
	\end{proof}
\end{theorem}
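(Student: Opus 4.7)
The plan is to set up a short exact sequence of sheaves on $X$ and compare asymptotic Euler characteristics of the three terms. For each $n\geq 1$ I would take
\[
0 \to (I^n)^{\sim}(n\cdot\dd) \to \OO_X(n\cdot\dd) \to \big(\OO_X/(I^n)^{\sim}\big)(n\cdot\dd) \to 0
\]
and, by additivity of $\chi$, write
\[
\chi\big(X,(I^n)^{\sim}(n\cdot\dd)\big) + \chi\Big(X,\big(\OO_X/(I^n)^{\sim}\big)(n\cdot\dd)\Big) = \chi\big(X,\OO_X(n\cdot\dd)\big).
\]
I would then read off the leading coefficient of each polynomial-in-$n$.

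For the right-hand side, higher cohomology of $\OO_X(n\cdot\dd)$ vanishes for $n\gg 0$, so $\chi(X,\OO_X(n\cdot\dd))=\dim_{\kk}R_{n\cdot\dd}$ is the Hilbert polynomial of $T=R^{(\dd)}$; expanding this against the Segre embedding identifies its leading term as $d_1^{\delta_1}\cdots d_m^{\delta_m}\deg_{\PP^N}(X)/\delta!$. For the middle summand, the hypothesis $\dim(\BB)=0$ forces $\OO_X/(I^n)^{\sim}$ to have finite support, so higher cohomology vanishes and the twist is immaterial; thus $\chi$ equals $\dim_{\kk}H^0(X,\OO_X/(I^n)^{\sim})$, whose leading coefficient is $e(\BB)/\delta!$ directly from \eqref{mult_base_points}.

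The main obstacle is the first summand $\chi(X,(I^n)^{\sim}(n\cdot\dd))$: I need to know that its leading coefficient agrees with that of $\dim_{\kk}H^0(X,(I^n)^{\sim}(n\cdot\dd))$, i.e.~that the higher cohomologies do not contribute to the $n^\delta$-term. For this I would exploit the generic finiteness of $\FF$ (which holds since $\dim(X)=\delta=\dim(Y)$) and invoke \autoref{prop:dimvanish}, yielding $\dim_S[\HH_{\nn}^i(\Rees(I))]_{\mathbf{0}} < \delta+1$ for $i\geq 2$. Combined with the multi-graded identification $H^i(X,(I^n)^{\sim}(n\cdot\dd)) \cong [\HH_{\nn}^{i+1}(\Rees(I))]_{\mathbf{0},n}$ from \eqref{relation_local_sheaf_cohom_positive}, this shows all $H^{\geq 1}$-contributions grow polynomially of degree strictly less than $\delta$, hence do not affect the leading coefficient in question.

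To finish, \autoref{lem_S_hat_S_tilde}(iii) and \autoref{theorem_factorize_rat_map}(iii) identify the leading coefficient of $\dim_{\kk}H^0(X,(I^n)^{\sim}(n\cdot\dd))=\dim_{\kk}[\widehat{S}]_n$ as $e(\widehat{S})/\delta!=e(\SSS)/\delta!=\deg(\FF)\cdot e(S)/\delta!=\deg(\FF)\deg_{\PP^s}(Y)/\delta!$. Matching the three leading terms in the Euler characteristic identity produces the first formula, and the second is then immediate from \autoref{lem_degree_Segre}.
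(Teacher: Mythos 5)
Your proposal is correct and follows essentially the same route as the paper's proof: the same short exact sequence, the same Euler-characteristic comparison, the same use of \autoref{prop:dimvanish} together with \autoref{relation_local_sheaf_cohom_positive} to kill the higher-cohomology contributions, and the same identification of the leading coefficient of the $H^0$-term via the saturated special fiber ring and \autoref{theorem_factorize_rat_map}$(iii)$. The only difference is that you spell out explicitly the role of $\widehat{S}$ and \autoref{lem_S_hat_S_tilde}$(iii)$, which the paper leaves implicit.
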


\subsection{Degree and syzygies of the base ideal}
\label{subsection_Sym_algebra}

In this subsection, using the close link between the Rees and the symmetric algebras, we derive some consequences of \autoref{NEW_CRITERION} in terms of the symmetric algebra of the base ideal of a rational map.  
Under the assumption of having a zero dimensional base locus, we bound the multiplicity $e_{\delta+1}\left({\left[\HH_{\nn}^1(\Rees(I))\right]}_{\mathbf{0}}\right)$ of the Rees algebra with the corresponding multiplicity $e_{\delta+1}\left( {\left[\HH_{\nn}^1\big(\Sym(I)\big)\right]}_\mathbf{0} \right)$ of the symmetric algebra, and the later one is bounded by using the $\ZZC_\bullet$ approximation complex.

We keep here similar notations with respect to the previous ones, but we assume that the image $Y$ is the projective space $\PP^{\delta}$.
We take this assumption because in general the symmetric algebra $\Sym(I)$ is only a $\kk[\yy]$-module and not an $S$-module (\autoref{notations_section_2}).
To be precise, we restate the notations that we use in this subsection.

\begin{notation}
	\label{notations_section_3}
	Let  $\mathcal{F} : X = X_1 \times X_2 \times \cdots \times X_m \dashrightarrow \PP^\delta$ be a dominant rational map defined by $\delta+1$ multi-homogeneous forms $\mathbf{f}=\{f_0, f_1, \ldots, f_\delta\} \subset R$ of the same multi-degree $\mathbf{d}=(d_1,\ldots,d_m)$, where $\delta$ is the dimension of $X$.
	Let $I \subset R$ be the multi-homogeneous ideal generated by $f_0,f_1,\ldots,f_\delta$.
	Let $S$ be the homogeneous coordinate ring $S = \kk[y_0,y_1,\ldots,y_\delta]$ of $\PP^\delta$.
\end{notation}

\begin{remark}
	\label{mult_eq_rank}
		Given a finitely generated $S$-module $N$, from the associative formula for multiplicity \cite[Corollary 4.6.9]{BRUNS_HERZOG}, we get 
		$$
		e_{\delta+1}\left(N\right) = \rank\left(N\right).
		$$
\end{remark}

The Rees algebra $\Rees(I)$ has a natural structure of multi-graded $\AAA$-module by \autoref{presentation_Rees}.
Also, from the minimal graded presentation of $I$
$$
F_1 \xrightarrow{\varphi} F_0 \xrightarrow{\left(f_0,\ldots,f_\delta\right)} I \rightarrow 0,
$$
the symmetric algebra 
$$
\Sym(I) \cong \AAA / I_1 \big( \left(y_0,\cdots,y_\delta\right) \cdot \varphi \big)
$$
has a natural structure of multi-graded $\AAA$-module.
Therefore, we have a canonical exact sequence of multi-graded $\AAA$-modules relating both algebras 
\begin{equation}
		\label{canonical_map_Sym_Rees}
		0 \rightarrow \EEQ \rightarrow \Sym(I) \rightarrow \Rees(I) \rightarrow 0.		
\end{equation}

\begin{remark}
	\label{torsion_multi_sym_alg}
	From the pioneering work \cite{MICALI_REES} (see also \autoref{lem_torsion_multi_sym_alg}) we have that the torsion submodule $\EEQ$ is given by
	$$
	\EEQ = \HH_{I}^0\left(\Sym(I)\right).
	$$	
\end{remark}

The following result is likely part of the folklore, but we include a proof for the sake of completeness.
\begin{lemma}
	\label{vanish_local_cohom}
	Let $M$ be a multi-graded $R$-module (not necessarily finitely generated) and $Z\subset X$ be a closed subset of dimension zero.
	If $\left(\Supp_R(M) \cap X \right)\subset Z$, then we have $\HH_{\nn}^j(M)=0$ for any $j \ge 2$.
	\begin{proof}
			Let $i:Z \rightarrow X$ be the inclusion of the closed set $Z$,  $\MM$ the sheafification $\MM=\widetilde{M}(\mathbf{n})$ of $M$ twisted by $\mathbf{n} \in \ZZ^m$, and ${\MM\mid}_{Z}$ the restriction of $\MM$ to $Z$.
			Since the support of $\MM$  is contained in $Z$, then extending ${\MM\mid}_{Z}$ by zero gives the isomorphism $\MM \cong i_{*}\left({\MM\mid}_{Z}\right)$ (see \cite[Exercise II.1.19(c)]{HARTSHORNE}).
			Using \autoref{relation_local_sheaf_cohom_positive}, \cite[Lemma III.2.10]{HARTSHORNE} and the Grothendieck vanishing theorem \cite[Theorem III.2.7]{HARTSHORNE}, we get 
			$$
			{\left[\HH_{\nn}^{j}\big(M\big)\right]}_{\mathbf{n}} \cong \HH^{j-1}\left(X,\MM\right) \cong			\HH^{j-1}\left(X,i_{*}\left({\MM\mid}_{Z}\right)\right) = \HH^{j-1}\left(Z, {\MM\mid}_{Z}\right)=0
			$$
			for any $j\ge 2$ and $\mathbf{n} \in \ZZ^m$.
	\end{proof}
\end{lemma}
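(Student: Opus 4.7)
The plan is to pass from local cohomology on the multi-graded ring $R$ to ordinary sheaf cohomology on $X$ via \autoref{relation_local_sheaf_cohom_positive}, and then exploit the fact that a quasi-coherent sheaf supported on a zero-dimensional closed subscheme can only have nonzero cohomology in degree zero. Since vanishing of a multi-graded module is equivalent to vanishing of each multi-graded piece, I would fix $\mathbf{n}\in\ZZ^m$ and argue the equality ${[\HH_{\nn}^j(M)]}_{\mathbf{n}}=0$ for every $j\ge 2$.

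The first step is to set $\MM=\widetilde{M}(\mathbf{n})$ on $X$. The hypothesis $\Supp_R(M)\cap X\subset Z$ implies $\Supp(\MM)\subset Z$ as a quasi-coherent sheaf on $X$, so by the standard extension-by-zero description of sheaves supported on a closed subset (\cite[Exercise II.1.19(c)]{HARTSHORNE}) there is an isomorphism $\MM\cong i_*(\MM|_Z)$, where $i\colon Z\hookrightarrow X$ denotes the inclusion of the closed subscheme. Next, since $i$ is a closed immersion, $i_*$ is exact on quasi-coherent sheaves, and \cite[Lemma III.2.10]{HARTSHORNE} gives $\HH^{j-1}(X,\MM)\cong \HH^{j-1}(Z,\MM|_Z)$ for every $j\ge 1$. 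Because $\dim Z=0$, Grothendieck's vanishing theorem \cite[Theorem III.2.7]{HARTSHORNE} yields $\HH^{j-1}(Z,\MM|_Z)=0$ as soon as $j-1\ge 1$. Plugging this into \autoref{relation_local_sheaf_cohom_positive} gives ${[\HH_{\nn}^j(M)]}_{\mathbf{n}}=0$ for every $j\ge 2$ and every $\mathbf{n}\in\ZZ^m$, which is the desired conclusion.

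The main obstacle I anticipate is that $M$ is not assumed to be finitely generated, while the isomorphism \autoref{relation_local_sheaf_cohom_positive} is usually stated for finitely generated modules. I would address this either by invoking the fact that, over the noetherian multi-graded setup of $X$, both local and sheaf cohomology commute with filtered direct limits and writing $M=\varinjlim_\alpha M_\alpha$ as the limit of its finitely generated multi-graded submodules (each $M_\alpha$ still satisfies $\Supp_R(M_\alpha)\cap X\subset Z$, so the vanishing passes to the limit), or alternatively by reviewing the Čech-cohomological derivation of \autoref{relation_local_sheaf_cohom_positive} and observing that it goes through verbatim for an arbitrary multi-graded $R$-module. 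Once this technical point is dispatched, the geometric heart of the argument — support on a zero-dimensional scheme kills higher cohomology — yields the statement immediately.
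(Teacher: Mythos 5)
Your argument is correct and follows essentially the same route as the paper's proof: sheafify each graded piece, use the support hypothesis and extension by zero to write $\MM\cong i_*(\MM|_Z)$, identify cohomology on $X$ with cohomology on $Z$ via \cite[Lemma III.2.10]{HARTSHORNE}, and conclude by Grothendieck vanishing together with \autoref{relation_local_sheaf_cohom_positive}. Your extra care about the lack of finite generation (direct limits, or noting the \v{C}ech derivation of \autoref{relation_local_sheaf_cohom_positive} works for arbitrary multi-graded modules) is a sound way to handle a point the paper passes over silently.
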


\begin{lemma}
	\label{lem_mult_H_m_1_Sym_and_Rees}
	The following statements hold:
	\begin{enumerate}[(i)]
		\item For each $i \ge 0,\ {\left[\HH_{\nn}^i(\Sym(I))\right]}_\mathbf{0}$ is a finitely generated graded $S$-module.
		\item If $\dim(\BB)=0$, then 
		$$
		\rank\Big( {\left[\HH_{\nn}^1(\Sym(I))\right]}_\mathbf{0} \Big) = \rank\Big( {\left[\HH_{\nn}^1(\Rees(I))\right]}_\mathbf{0} \Big) + \rank\Big( {\left[\HH_{\nn}^1\big(\HH_I^0(\Sym(I))\big)\right]}_\mathbf{0} \Big).
		$$
	\end{enumerate}
	\begin{proof}
			$(i)$ The proof of \autoref{prop_finite_dim_Hn_Rees}$(i)$ applies verbatim.
			
			$(ii)$ From \autoref{torsion_multi_sym_alg}, we can make the identification $\EEQ = \HH_I^0(\Sym(I))$ in the short exact sequence \autoref{canonical_map_Sym_Rees}.
			Hence, we can obtain the following long exact sequence in local cohomology
			$$
			\HH_{\nn}^0(\Rees(I)) \rightarrow \HH_{\nn}^1\big(\HH_I^0(\Sym(I))\big) \rightarrow \HH_{\nn}^1(\Sym(I)) \rightarrow  \HH_{\nn}^1(\Rees(I))
			\rightarrow
			\HH_{\nn}^2\big(\HH_I^0(\Sym(I))\big).
			$$
			We clearly have that $\HH_{\nn}^0(\Rees(I))=0$, and from \autoref{vanish_local_cohom} we get that $\HH_{\nn}^2\big(\HH_I^0(\Sym(I))\big)=0$.
			Therefore, the assertion follows.
	\end{proof}
\end{lemma}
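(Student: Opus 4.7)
For part (i), the plan is to adapt the double-complex/spectral-sequence argument of \autoref{prop_finite_dim_Hn_Rees}$(i)$ verbatim. The crucial input there was that $\Rees(I)$ is a finitely generated multi-graded $\AAA$-module; for $\Sym(I)$ this is even more transparent, since the presentation $\Sym(I) \cong \AAA/I_1((y_0,\ldots,y_\delta)\cdot\varphi)$ exhibits it as a cyclic multi-graded $\AAA$-module. Taking a minimal multi-graded free resolution $F_\bullet$ of $\Sym(I)$ over $\AAA$ and tensoring with the \v{C}ech complex $\mathcal{C}_\nn^\bullet$ yields a double complex whose total cohomology is $\HH_\nn^\bullet(\Sym(I))$. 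The spectral sequence with $E_1^{-p,q}=\HH_\nn^q(F_p)$ then shows that $[\HH_\nn^{q-p}(\Sym(I))]_\mathbf{0}$ is a subquotient of a finitely generated $\kk[\yy]$-module, hence finitely generated over $S=\kk[\yy]$.

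For part (ii), I would start from the canonical sequence \autoref{canonical_map_Sym_Rees} together with the identification $\EEQ=\HH_I^0(\Sym(I))$ furnished by \autoref{torsion_multi_sym_alg}. Taking local cohomology with respect to $\nn$ gives a long exact sequence with three consecutive terms
$$\HH_\nn^0(\Rees(I)) \to \HH_\nn^1(\HH_I^0(\Sym(I))) \to \HH_\nn^1(\Sym(I)) \to \HH_\nn^1(\Rees(I)) \to \HH_\nn^2(\HH_I^0(\Sym(I))).$$
The outer terms should vanish. For the leftmost: $X$ is assumed integral, so $R$ is a domain, $\Rees(I)\subset R[t]$ is a domain, and $\HH_\nn^0(\Rees(I))=0$ since $\nn$ is nonzero. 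For the rightmost: every element of $\HH_I^0(\Sym(I))$ is $I$-power-torsion, so its $R$-annihilator contains a power of $I$; therefore $\Supp_R(\HH_I^0(\Sym(I)))\subset V(I)$, and the intersection with $X$ is contained in $\BB$, which by hypothesis has dimension zero. Then \autoref{vanish_local_cohom} applies and gives $\HH_\nn^j(\HH_I^0(\Sym(I)))=0$ for all $j\ge 2$.

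With these vanishings the long exact sequence collapses to a short exact sequence of multi-graded $\AAA$-modules. Restricting to multi-degree $\mathbf{0}$ in $R$ is an exact functor, so one obtains a short exact sequence of graded $S$-modules, all three of which are finitely generated by part (i) and \autoref{prop_finite_dim_Hn_Rees}$(i)$. The identity on ranks then follows from additivity of the rank function on short exact sequences of finitely generated modules over the integral domain $S$. The only place where any care is required is the support computation for $\HH_I^0(\Sym(I))$, which I expect to be the main subtlety since $\Sym(I)$ itself is not finitely generated over $R$; once that is in place, everything else is formal.
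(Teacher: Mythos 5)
Your proposal is correct and follows essentially the same path as the paper: part (i) by the verbatim adaptation of the spectral-sequence argument of \autoref{prop_finite_dim_Hn_Rees}$(i)$, and part (ii) by taking the long exact sequence of $\HH_{\nn}^\bullet$ applied to \autoref{canonical_map_Sym_Rees} with $\EEQ=\HH_I^0(\Sym(I))$, killing the outer terms via $\HH_{\nn}^0(\Rees(I))=0$ and \autoref{vanish_local_cohom}, and then comparing ranks in degree $\mathbf{0}$. Your explicit justification of the support condition for $\HH_I^0(\Sym(I))$ and of the vanishing of $\HH_{\nn}^0(\Rees(I))$ only makes precise what the paper leaves implicit.
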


In the rest of this subsection one of the main tools to be used will be the so-called approximation complexes.
These complexes were introduced in \cite{Simis_Vasc_Syz_Conormal_Mod}, and extensively developed in \cite{HSV_Approx_Complexes_I}, \cite{HSV_Approx_Complexes_II} and \cite{HSV_TRENTO_SCHOOL}.
In particular, we will consider the $\ZZC_\bullet$ complex in order to obtain an approximation of a resolution of $\Sym(I)$.

We fix some notations regarding the  approximation complexes, and for more details we refer the reader to \cite{HSV_TRENTO_SCHOOL}.
Let $K_\bullet=K(f_0,\ldots,f_\delta; R)$ be the graded Koszul complex  of $R$-modules
\begin{equation*}
	K_\bullet: 0 \rightarrow K_{\delta+1} \xrightarrow{d_{\delta+1}} K_\delta \xrightarrow{d_\delta}\ldots \xrightarrow{d_2} K_1 \xrightarrow{d_1} K_0 \xrightarrow{d_0} 0
\end{equation*}
associated to the sequence $\{f_0,\ldots,f_\delta\}$.
For each $i\ge0$, let $Z_i$ be the $i$-th Koszul cycle and $H_i$ be the $i$-th Koszul homology, that is
$Z_i = \Ker(d_i)$ and $H_i=\HH_i(K_\bullet)$.
Using the Koszul complex $K(y_0,\ldots,y_\delta; \AAA)$, one can construct the approximation complexes $\ZZC_\bullet$ and $\mathcal{M}_\bullet$ (see \cite[Section 4]{HSV_TRENTO_SCHOOL}). 
The $\ZZC_\bullet$ complex is given by  
\begin{equation*}
		\label{Z_complex}
		\ZZC_\bullet: 0 \rightarrow \ZZC_{\delta+1} \rightarrow \ZZC_{\delta} \rightarrow \cdots \rightarrow \ZZC_{1} \rightarrow \ZZC_0 \rightarrow 0,	
\end{equation*}
where $\ZZC_{i} = \big[Z_i \otimes_R \AAA\big](i\cdot\mathbf{d},-i)$ for all $1\le i \le \delta+1$. 
We have that $\HH_0(\ZZC_\bullet) \cong \Sym(I)$ and $\ZZC_{\delta+1}=0$.
Similarly, the $\MM_\bullet$ complex is given by 
\begin{equation*}
\label{M_complex}
\MM_\bullet: 0 \rightarrow \MM_{\delta+1} \rightarrow \MM_{\delta} \rightarrow \cdots \rightarrow \MM_{1} \rightarrow \MM_0 \rightarrow 0,	
\end{equation*}
where $\MM_{i} = \big[H_i \otimes_R \AAA\big](i\cdot\mathbf{d},-i)$ for all $1\le i \le \delta+1$. 

\medskip

The next theorem contains the main results of this subsection.

\begin{theorem}
	\label{degree_in_terms_of_Sym}
	Let $\FF: X = X_1 \times X_2 \times \cdots \times X_m \dashrightarrow \PP^{\delta}$ be a dominant rational map. 
	If $\dim(\BB)=0$,  then the following statements hold:
	\begin{enumerate}[(i)]
		\item $\deg(\FF) = \rank\Big( {\left[\HH_{\nn}^0\big(\HH_I^1(\Sym(I))\big)\right]}_\mathbf{0}  \Big) + 1$.
		\item We have  
		$$\deg(\FF) \le \rank\Big( {\left[\HH_{\nn}^1\big(\Sym(I)\big)\right]}_\mathbf{0}  \Big) + 1,$$
		with equality if $I$ is of linear type.
		\item In terms of the Koszul cycles $Z_i$, we get the following upper bound
		$$
		\deg(\FF) \le 1+ \sum_{i=0}^{\delta} \dim_{\kk}\Big({\left[\HH_{\nn}^{i+1}(Z_i)\right]}_{i\cdot \mathbf{d}} \Big).
		$$
	\end{enumerate} 
	\begin{proof}
		$(i)$ We will consider the double complex $F^{\bullet,\bullet}=C_{\nn}^{\bullet} \otimes_R C_I^\bullet \otimes_R \Sym(I)$, where $C_{\nn}^{\bullet}$ and $C_I^\bullet$ are the \v{C}ech complexes corresponding with $\nn$ and $I$, respectively.
		We have the spectral sequence 
		$$
		E_2^{p,q} = \HH_{\nn}^p\big( \HH_I^q(\Sym(I)) \big) \Longrightarrow \HH^{p+q}(\text{Tot}(F^{\bullet,\bullet})) \cong \HH_{\nn}^{p+q}(\Sym(I)).
		$$
		From \autoref{vanish_local_cohom} we obtain that
		$E_2^{p,q} = 0$ for $p \ge 2$.
		Therefore, the
		spectral sequence converges with  $E_2^{p,q}=E_{\infty}^{p,q}$.
		
		 The filtration of the term $\HH^{1}(\text{Tot}(F^{\bullet,\bullet})) \cong \HH_{\nn}^1(\Sym(I))$ yields the equality 
		$$
		\rank \Big( {\left[\HH_{\nn}^1(\Sym(I))\right]}_\mathbf{0} \Big) = \rank \Big( {\left[\HH_{\nn}^0\big( \HH_I^1(\Sym(I)) \big)\right]}_\mathbf{0} \Big) + \rank \Big( {\left[\HH_{\nn}^1\big( \HH_I^0(\Sym(I)) \big)\right]}_\mathbf{0} \Big),
		$$
		and assembling with \autoref{mult_eq_rank}, \autoref{NEW_CRITERION},  and \autoref{lem_mult_H_m_1_Sym_and_Rees}$(ii)$ we get 
		$$
		\deg(\FF) = \rank \Big( {\left[\HH_{\nn}^0\big( \HH_I^1(\Sym(I)) \big)\right]}_\mathbf{0} \Big) + 1.
		$$
		
		$(ii)$ It follows from \autoref{mult_eq_rank}, \autoref{NEW_CRITERION} and \autoref{lem_mult_H_m_1_Sym_and_Rees}$(ii)$.
		
		$(iii)$ 
		For any $i \ge 0$, we have that $I\cdot H_i=0$ and so the support of $H_i$ is contained in $\BB=V(I)$.
		Hence, for any $\pp \not\in \BB$ we have $(\MM_\bullet)_{\pp}=0$.
		Applying basic properties of approximation complexes (see e.g.~\cite[Corollary 4.6]{HSV_TRENTO_SCHOOL}), we can obtain that $\HH_i(\ZZC_{\bullet})_{\pp}=0$ for any $\pp \not\in \BB$ and $i \ge 1$.
		Therefore, from \autoref{vanish_local_cohom} we get that $\HH_{\nn}^j(\HH_i(\ZZC_{\bullet}))=0$ for any $j \ge 2$ and $i \ge 1$.
		  
		Let $\{\vartheta_1,\ldots,\vartheta_r \}$ be a set of generators of $\nn$ and  $G^{\bullet,\bullet}$ be the corresponding double complex
		 \begin{center}		
		 	\begin{tikzpicture}
		 	\matrix (m) [matrix of math nodes,row sep=1em,column sep=2em,minimum width=2em, text height=1.5ex, text depth=0.25ex]
		 	{
		 		0 & \ZZC_\delta \otimes_R C_{\nn}^{r} & \ZZC_{\delta-1} \otimes_R C_{\nn}^{r}  & \cdots & \ZZC_0 \otimes_R C_{\nn}^{r}  & 0\\
		 		& \vdots & \vdots & & \vdots & \\
		 		0 & \ZZC_\delta \otimes_R C_{\nn}^{1} & \ZZC_{\delta-1} \otimes_R C_{\nn}^{1}  & \cdots & \ZZC_0 \otimes_R C_{\nn}^{1}  & 0\\
		 		0 & \ZZC_\delta \otimes_R C_{\nn}^{0} & \ZZC_{\delta-1} \otimes_R C_{\nn}^{0}  & \cdots & \ZZC_0 \otimes_R C_{\nn}^{0}  & 0\\
		 	};
		 	\path[-stealth]
			(m-1-1) edge (m-1-2)
			(m-1-2) edge (m-1-3)
			(m-1-3) edge (m-1-4)
			(m-1-4) edge (m-1-5)			
			(m-1-5) edge (m-1-6)
			(m-3-1) edge (m-3-2)
			(m-3-2) edge (m-3-3)
			(m-3-3) edge (m-3-4)
			(m-3-4)	 edge (m-3-5)			
			(m-3-5) edge (m-3-6)
			(m-4-1) edge (m-4-2)
			(m-4-2) edge (m-4-3)
			(m-4-3) edge (m-4-4)
			(m-4-4)	 edge (m-4-5)			
			(m-4-5) edge (m-4-6)
			(m-2-2) edge (m-1-2)
			(m-2-3) edge (m-1-3)
			(m-2-5) edge (m-1-5)
			(m-3-2) edge (m-2-2)
			(m-3-3) edge (m-2-3)
			(m-3-5) edge (m-2-5)
			(m-4-2) edge (m-3-2)
			(m-4-3) edge (m-3-3)
			(m-4-5) edge (m-3-5)			
		 	;
		 	\end{tikzpicture}	
		 \end{center}
	 	  given by $\ZZC_\bullet \otimes_R C_{\nn}^\bullet$.
		 The double complex above is written in the second quadrant.
		 Then, the spectral sequence corresponding with the second filtration is given by
		 $$
		 {}^{\text{II}}E_2^{p,-q} = \begin{cases}
		 		\HH_{\nn}^p\left(\Sym(I)\right) \qquad \text{if } q = 0\\
				\HH_{\nn}^p\left(\HH_q(\ZZC_\bullet)\right) \qquad \text{if } p \le 1 \text{ and } q \ge 1\\
				0 \qquad \qquad \qquad \quad \text{otherwise.}
		 \end{cases}
		 $$
		 Thus, it converges with  $E_2^{p,-q}=E_{\infty}^{p,-q}$.
		 In particular, we have $\HH^1(\text{Tot}(G^{\bullet,\bullet}))\cong\HH_{\nn}^1(\Sym(I))$.
		 
		 On the other hand, by computing with the first filtration we get 
		 $$
		 {}^{\text{I}}E_1^{-p,q} = \HH_{\nn}^q(\ZZC_p).
		 $$ 
		 Therefore we obtain the following upper bound
		 $$
		 \rank\Big( {\left[\HH_{\nn}^1(\Sym(I))\right]}_\mathbf{0} \Big) \le \sum_{i=0}^{\delta} \rank\Big( {\left[\HH_{\nn}^{i+1}(\ZZC_{i})\right]}_\mathbf{0} \Big).
		 $$
		 For each $0\le i \le \delta$, since $\ZZC_{i}=\left[Z_i\otimes_R \AAA\right] (i\cdot \mathbf{d},-i)$ then we have that
		\begin{align*}
			\rank\Big({\left[\HH_{\nn}^{i+1}(\ZZC_{i})\right]}_\mathbf{0}\Big) = \rank\Big( {\left[\HH_{\nn}^{i+1}(Z_i)\right]}_{i\cdot \mathbf{d}} \otimes_{\kk} S(-i) \Big) = \dim_{\kk}\big({\left[\HH_{\nn}^{i+1}(Z_i)\right]}_{i\cdot \mathbf{d}}\big).
		\end{align*}
		Finally, the inequality follows from part $(ii)$.
	\end{proof}
\end{theorem}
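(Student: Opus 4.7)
\textbf{Proof proposal for \autoref{degree_in_terms_of_Sym}.}

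The central idea is that \autoref{NEW_CRITERION} gives the degree of $\FF$ in terms of the rank of ${\left[\HH_{\nn}^1(\Rees(I))\right]}_{\mathbf 0}$, so to relate this to the symmetric algebra I would transfer information through the short exact sequence \autoref{canonical_map_Sym_Rees}, whose kernel is identified with $\HH_I^0(\Sym(I))$ by \autoref{torsion_multi_sym_alg}. For part~(i), I would consider the Grothendieck composite-functor spectral sequence
\[
E_2^{p,q}=\HH_{\nn}^p\!\left(\HH_I^q(\Sym(I))\right)\;\Longrightarrow\;\HH_{\nn}^{p+q}(\Sym(I)).
\]
The decisive observation is that every module $\HH_I^q(\Sym(I))$ is $I$-torsion, hence supported on $V(I)\cap X=\BB$, which is zero-dimensional. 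By \autoref{vanish_local_cohom}, $E_2^{p,q}=0$ for $p\ge 2$, so the sequence collapses and yields a two-step filtration on $\HH_{\nn}^1(\Sym(I))$ with graded pieces $\HH_{\nn}^1(\HH_I^0(\Sym(I)))$ and $\HH_{\nn}^0(\HH_I^1(\Sym(I)))$. Taking ranks of the multi-degree-$\mathbf 0$ parts and comparing with the rank equality of \autoref{lem_mult_H_m_1_Sym_and_Rees}$(ii)$, the term $\rank\big({\left[\HH_{\nn}^1(\HH_I^0(\Sym(I)))\right]}_{\mathbf 0}\big)$ cancels and one obtains
\[
\rank\!\Big({\left[\HH_{\nn}^0(\HH_I^1(\Sym(I)))\right]}_{\mathbf 0}\Big)=\rank\!\Big({\left[\HH_{\nn}^1(\Rees(I))\right]}_{\mathbf 0}\Big).
\]
Since $Y=\PP^{\delta}$ has degree $1$, \autoref{NEW_CRITERION} together with \autoref{mult_eq_rank} identifies the right-hand side with $\deg(\FF)-1$, giving (i).

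For (ii), the same filtration shows that $\rank\big({\left[\HH_{\nn}^1(\Sym(I))\right]}_{\mathbf 0}\big)$ is the sum of the two nonzero graded pieces, and the first of these is exactly $\deg(\FF)-1$ by (i); this yields the inequality. When $I$ is of linear type, $\Sym(I)\xrightarrow{\cong}\Rees(I)$, so $\HH_I^0(\Sym(I))=0$, the filtration is trivial, and the inequality is an equality.

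For (iii), I would truncate the $\ZZC_\bullet$-complex against the \v Cech complex $C_{\nn}^{\bullet}$ to form the double complex $G^{\bullet,\bullet}=\ZZC_\bullet\otimes_R C_{\nn}^{\bullet}$, as in the statement. The key preliminary step is to argue that $\HH_i(\ZZC_\bullet)$ for $i\ge 1$ is supported on $\BB$: this uses the comparison with the $\MM_\bullet$-complex, whose components $H_i\otimes_R\AAA$ are annihilated by $I$, combined with the standard acyclicity criterion (\cite[Corollary 4.6]{HSV_TRENTO_SCHOOL}) to conclude $\HH_i(\ZZC_\bullet)_{\pp}=0$ for $\pp\notin\BB$. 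Then \autoref{vanish_local_cohom} kills $\HH_{\nn}^p(\HH_i(\ZZC_\bullet))$ for $p\ge 2,\ i\ge 1$, so the vertical-first spectral sequence collapses and identifies $\HH^1(\mathrm{Tot}(G^{\bullet,\bullet}))$ with $\HH_{\nn}^1(\Sym(I))$. The horizontal-first spectral sequence has $E_1$-page $\HH_{\nn}^q(\ZZC_p)$ and provides an upper bound
\[
\rank\!\Big({\left[\HH_{\nn}^1(\Sym(I))\right]}_{\mathbf 0}\Big)\;\le\;\sum_{i=0}^{\delta}\rank\!\Big({\left[\HH_{\nn}^{i+1}(\ZZC_i)\right]}_{\mathbf 0}\Big).
\]
Because $\ZZC_i=[Z_i\otimes_R\AAA](i\cdot\mathbf d,-i)$ and $\AAA=R[\mathbf y]$ is $R$-flat, each summand equals $\dim_{\kk}\big({\left[\HH_{\nn}^{i+1}(Z_i)\right]}_{i\cdot\mathbf d}\big)$. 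Combining with (ii) gives the claimed inequality.

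The likely main obstacle is the support claim for the higher $\ZZC_\bullet$-homologies and the careful bookkeeping ensuring the two spectral sequences live in the right quadrants so that convergence and rank estimates behave well; once that is set up, the ranks over $S$ add along filtrations and the arithmetic is routine.
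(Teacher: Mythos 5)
Your proposal is correct and follows essentially the same route as the paper: the $\HH_{\nn}\circ\HH_I$ spectral sequence collapsing via \autoref{vanish_local_cohom} for (i), the rank bookkeeping through \autoref{lem_mult_H_m_1_Sym_and_Rees}$(ii)$, \autoref{mult_eq_rank} and \autoref{NEW_CRITERION} for (ii), and the two filtrations of $\ZZC_\bullet\otimes_R C_{\nn}^{\bullet}$ together with the support argument via the $\MM_\bullet$-complex for (iii). No gaps to report.
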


\subsection{Rational maps defined over multi-projective spaces}\label{subsection_mult_proj_spaces}

Here we specialize further our approach to the case of a multi-graded dominant rational map from a multi-projective space to a projective space.
The main results of this subsection are given in \autoref{thm_bigrad_map_1_n} and \autoref{thm_bigrad_map_2_2}, where we provide effective criteria for the birationality of a bi-graded rational map of the form $\PP^1\times\PP^1 \dashrightarrow \PP^2$ with low bi-degree. 
We set the following notation.

\begin{notation}
	\label{notations_section_5}
	Let $m\ge 1$.
	For each $i = 1,\ldots,m$, let $X_i$ be the projective space $X_i=\PP^{r_i}$ and $A_i$ be its coordinate ring $A_i=\kk[\xx_i]=\kk[x_{i,0},x_{i,1},\ldots,x_{i,r_i}]$.	
	Let  $\mathcal{F} : X = X_1 \times X_2 \times \cdots \times X_m \dashrightarrow \PP^{\delta}$ be a dominant rational map defined by $\delta+1$ multi-homogeneous polynomials $\mathbf{f}=\{f_0, f_1, \ldots, f_\delta\} \subset R:=A_1\otimes_{\kk}A_2\otimes_{\kk}\cdots A_m$ of the same multi-degree $\mathbf{d}=(d_1,\ldots,d_m)$, where $\delta=r_1+r_2+\cdots+r_m$ is the dimension of $X$.
	Let $I \subset R$ be the multi-homogeneous ideal generated by $f_0,f_1,\ldots,f_\delta$.
	Let $S$ be the homogeneous coordinate ring $S = \kk[y_0,y_1,\ldots,y_\delta]$ of $\PP^\delta$.
	Let $\nn$ be the irrelevant multi-homogeneous ideal of $R$, which is given by 
	$
	\nn = \bigoplus_{j_1>0,\ldots,j_m>0} R_{j_1,\ldots,j_m}.
	$
\end{notation}

First we give a description of the local cohomology modules $\HH_{\nn}^j(R)$, with special attention to its multi-graded structure.
We provide a shorter proof than the one obtained in \cite[Section 6.1]{BOTBOL_IMPLICIT_SURFACE}.

Given any subset $\alpha$ of $\{1,\ldots,m\}$, then we define its weight by 
$
\lvert\lvert \alpha \rvert\rvert = \sum_{i \in \alpha} r_i.
$
For $i \in\{1,\ldots,m\},$ let $\mm_i$ be the maximal irrelevant ideal $\mm_i=(\xx_i)=(x_{i,0},x_{i,1},\ldots,x_{i,r_i}).$ We then have that 
$$
\HH_{\mm_i}^j(A_i) \cong \begin{cases}
\frac{1}{\xx_i}\kk[\xx_i^{-1}] \qquad\text{if } j=r_i+1\\
0 \qquad\qquad\quad\;\; \text{otherwise.}
\end{cases}
$$

\begin{proposition}
	\label{local_cohom_multi_grad}
	For any $j \ge 0$ we have that
	$$
	\HH_{\nn}^j(R) 
	\;\cong\; \bigoplus_{\substack{\alpha \subset \{1,\ldots,m\}\\
			\lvert\lvert\alpha \rvert\rvert + 1 = j}
	} 		
	\left( 
	\bigotimes_{i \in \alpha}
	\frac{1}{\xx_i}\kk[\xx_i^{-1}]
	\right) \otimes_{\kk} 		
	\left( 
	\bigotimes_{i \not\in \alpha}
	A_i
	\right).
	$$		
	\begin{proof}
		First we check that $\HH_{\nn}^0(R)=\HH_{\nn}^1(R)=0$.
		It is clear that $\HH_{\nn}^0(R)=0$, and using \autoref{relation_local_sheaf_cohom_zero} we get the exact sequence 
		$$
		0  \rightarrow R \rightarrow \bigoplus_{\mathbf{n} \in \ZZ^m} \HH^0(X, \OO_X(\mathbf{n})) \rightarrow \HH_{\nn}^1(R) \rightarrow 0.
		$$
		From the K\"unneth formula (see \cite[\href{https://stacks.math.columbia.edu/tag/0BEC}{Tag 0BEC}]{stacks-project} for a detailed proof) and \cite[Proposition II.5.13]{HARTSHORNE} we obtain 
		$$
		\HH^0(X, \OO_X(\mathbf{n})) \cong \HH^0\left(X_1,\OO_{X_1}(n_1)\right) \otimes_{\kk}\cdots \otimes_{\kk} \HH^0\left(X_m,\OO_{X_m}(n_m)\right) \cong {\left[A_1\right]}_{n_1} \otimes_{\kk}\cdots\otimes_{\kk}{\left[A_m\right]}_{n_m} \cong R_{\mathbf{n}},
		$$
		so we conclude that $\HH_{\nn}^1(R)=0$.
		
		Let $j\ge 2$. Then, the K\"unneth formula and \autoref{relation_local_sheaf_cohom_positive} yield the following isomorphisms 
		\begin{align*}
		\HH_{\nn}^{j}(R) &\cong \bigoplus_{\mathbf{n} \in \ZZ^m}\HH^{j-1}(X, \OO_X(\mathbf{n}))\\
		&\cong
		\bigoplus_{\mathbf{n} \in \ZZ^m}\left(
		\bigoplus_{j_1+\cdots+j_m=j-1}
		\HH^{j_1}\left(X_1,\OO_{X_1}(n_1)\right) \otimes_{\kk}\cdots \otimes_{\kk} \HH^{j_m}\left(X_m,\OO_{X_m}(n_m)\right)
		\right)
		.
		\end{align*}
		For each $i=1,\ldots,m$ we have that 
		$$
		\bigoplus_{n\in \ZZ}\HH^{j_i}\left(X_i,\OO_{X_i}(n)\right) \cong \begin{cases}
		A_i \qquad\qquad\quad\; \text{ if } j_i=0\\
		\HH_{\mm_i}^{r_i+1}(A_i) \qquad \text{ if } j_i=r_i\\
		0 \qquad\qquad\quad\;\;\; \text{ otherwise.}
		\end{cases}
		$$
		Therefore, we get the formula
		$$
		\HH_{\nn}^{j}(R) \cong \bigoplus_{\substack{j_1+\cdots+j_m=j-1\\j_i = 0 \text{ or } j_i=r_i}}
		\left( 
		\bigotimes_{\{i\mid j_i=r_i\}}
		\HH_{\mm_i}^{r_i+1}(A_i)
		\right) \otimes_{\kk} 		
		\left( 
		\bigotimes_{\{i\mid j_i=0\}}
		A_i
		\right),
		$$
		which is equivalent to the statement of the proposition.
	\end{proof}
\end{proposition}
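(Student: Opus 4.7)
The plan is to push the computation through the bridge between local and sheaf cohomology recorded in \eqref{relation_local_sheaf_cohom_zero}--\eqref{relation_local_sheaf_cohom_positive}, combined with the K\"unneth formula on the product $X=\PP^{r_1}\times\cdots\times \PP^{r_m}$ and Serre's classical computation of the sheaf cohomology of line bundles on projective space. The target formula is naturally indexed by a choice of subset $\alpha\subset\{1,\ldots,m\}$, and this subset will arise as the set of factors on which top-degree (rather than degree zero) cohomology contributes in the K\"unneth decomposition.

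First, I would dispose of the small degrees. The vanishing $\HH_{\nn}^0(R)=0$ is immediate because $R$ is a domain and $\nn$ contains the nonzero element $x_{1,0}x_{2,0}\cdots x_{m,0}$, so no element of $R$ is $\nn$-torsion. Substituting into \eqref{relation_local_sheaf_cohom_zero} reduces the vanishing $\HH_{\nn}^1(R)=0$ to showing that $\bigoplus_{\mathbf{n}\in\ZZ^m}\HH^0(X,\OO_X(\mathbf{n}))$ equals $R$ multidegree by multidegree. The K\"unneth formula together with the standard identification $\HH^0(\PP^{r_i},\OO(n_i))\cong [A_i]_{n_i}$ immediately yields this. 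For $j\ge 2$ I would invoke \eqref{relation_local_sheaf_cohom_positive} to write $\HH_{\nn}^j(R)\cong\bigoplus_{\mathbf{n}\in\ZZ^m}\HH^{j-1}(X,\OO_X(\mathbf{n}))$ and then K\"unneth each summand as
$$\HH^{j-1}(X,\OO_X(\mathbf{n}))\;\cong\;\bigoplus_{j_1+\cdots+j_m=j-1}\bigotimes_{i=1}^{m}\HH^{j_i}(X_i,\OO_{X_i}(n_i)).$$
Serre's theorem forces $\HH^{j_i}(\PP^{r_i},\OO(n_i))=0$ unless $j_i\in\{0,r_i\}$. Aggregating over $n_i\in\ZZ$, the $j_i=0$ contribution reassembles into $A_i$ and the $j_i=r_i$ contribution reassembles into $\HH_{\mm_i}^{r_i+1}(A_i)\cong\tfrac{1}{\xx_i}\kk[\xx_i^{-1}]$. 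The constraint $\sum_i j_i=j-1$ with each $j_i\in\{0,r_i\}$ translates exactly to choosing $\alpha=\{i:j_i=r_i\}$ with $\lvert\lvert\alpha\rvert\rvert=j-1$, recovering the stated decomposition.

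The only subtle point, and the step I would scrutinize most carefully, is the K\"unneth decomposition for arbitrary $\mathbf{n}\in\ZZ^m$: one needs the relevant spectral sequence to degenerate and the direct sums over $\mathbf{n}$ to commute with the tensor products uniformly in $\mathbf{n}$. Because each $\OO_{X_i}(n_i)$ is locally free of finite rank on a projective space over the field $\kk$, the sheaves are flat and the K\"unneth spectral sequence collapses at $E_2$, so the decomposition is compatible with the multigrading degree by degree. Everything else is formal bookkeeping, and a shorter alternative would be to run the K\"unneth formula directly on the \v{C}ech complexes computing $\HH_{\mm_i}^{\bullet}(A_i)$ after writing $\HH_{\nn}^{\bullet}(R)$ as the cohomology of the tensor product of these \v{C}ech complexes; either route leads to the same conclusion.
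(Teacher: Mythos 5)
Your proposal is correct and follows essentially the same route as the paper's own proof: the vanishing of $\HH_{\nn}^0(R)$ and $\HH_{\nn}^1(R)$ via the sheaf--local cohomology exact sequence, then for $j\ge 2$ the K\"unneth decomposition of $\HH^{j-1}(X,\OO_X(\mathbf{n}))$ combined with Serre's computation on each $\PP^{r_i}$, reassembled over the multidegrees into $A_i$ or $\HH_{\mm_i}^{r_i+1}(A_i)\cong\frac{1}{\xx_i}\kk[\xx_i^{-1}]$ according to the subset $\alpha=\{i : j_i=r_i\}$. No gaps; your closing remark about working directly with tensor products of \v{C}ech complexes is the longer alternative the paper attributes to the earlier literature.
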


Now we give a different proof of \autoref{theorem_factorize_rat_map}$(iv)$; in this case we recover the equivalence between the birationality of $\FF$ and the equality $\SSS=S$.
The following result is a  generalization of
\cite[Proposition 1.2]{PAN_RUSSO}.

\begin{proposition}
	\label{birationality_implies_sat}
	Let  $\FF: \PP^{r_1} \times \PP^{r_2} \times \cdots \times \PP^{r_m}\dashrightarrow \PP^\delta$ be a dominant rational map with $r_1+r_2+\ldots+r_m=\delta$.
	Then, the map $\FF$ is birational if and only if for all $n\ge 1$ we have
	$$
	{\left[I^n\right]}_{n\cdot\dd}={\left[{(I^n)}^{\sat}\right]}_{n\cdot\dd}.
	$$		  
	\begin{proof}
		From \autoref{theorem_factorize_rat_map}$(iii)$, the equality above implies the birationality of $\FF$. 
		
		For the other implication, let us assume that $\FF$ is birational.
		Since $\FF$ is dominant, then $S=\kk[y_0,\ldots,y_\delta]$ is isomorphic to the coordinate ring $S \cong \kk[I_{\dd}]=\kk[f_0,\ldots,f_\delta]$ of the image.
		Let $T$ be the multi-Veronese subring $T=\kk[R_{\dd}]$, then after regrading we have a canonical inclusion $S \cong \kk[I_{\dd}] \subset T$ of standard graded $\kk$-algebras. 
		From \autoref{lem_deg_map_field_deg} and the assumption of birationality we get 
		$$
		\left[T:S\right]= \deg(\FF)=1.
		$$
		So we have $\Quot(S)=\Quot(T)$ and the following canonical inclusions
		$$
		S \subset T \subset \Quot(T) = \Quot(S).
		$$
		
		Let $n\ge 1$.
		It is enough to prove that  for any $w \in {\left[{(I^n)}^{\sat}\right]}_{n\cdot\dd} \subset T_n$, we have that $w$ is integral over $S$.
		Indeed, since $S$ is integrally closed, it will imply that $w \in S_{n} \cong {\left[I^n\right]}_{n\cdot\dd}$.
		
		Let $w \in {\left[{(I^n)}^{\sat}\right]}_{n\cdot\dd}$.
		We shall prove the equivalent condition that $S[w]$ is a finitely generated $S$-module (see e.g.~\cite[Proposition 5.1]{ATIYAH_MACDONALD}).				
		From the condition $w \in {\left[{(I^n)}^{\sat}\right]}_{n\cdot\dd}$, we can choose some $r > 0$ such that 
		$$
		T_{rn}\cdot w = R_{rn\cdot\dd}\cdot w \subset {\left[I^n\right]}_{(r+1)n\cdot\dd}.
		$$
		We claim that for any $q \ge r+1$ we have $w^q \in S \cdot R_{rn\cdot\dd}$.
		If we prove this claim, then it will follow that $S[w]$ is a finitely generated $S$-module.
		
		Let $\{F_1,\ldots,F_c\}$ be a minimal generating set of the ideal $I^n$.
		For $q=r+1$, since $w^r \in R_{rn\cdot\dd}$ we can write 
		$$
		w^{r+1} =w^{r}w=h_1F_1 + h_2F_2+\cdots+h_cF_c,
		$$
		where $\deg(h_i)=rn\cdot \dd$ for each $i=1,\ldots,c$.
		For $q=r+2$, since $h_i\in R_{rn\cdot\dd}$ we get
		$$
		w^{r+2}=ww^{r+1}=\sum_{i=1}^{c} \left(h_iw\right)F_i = \sum_{i=1}^c\left(\sum_{j=1}^ch_{ij}F_j\right)F_i=\sum_{i,j}h_{ij}F_iF_j,
		$$
		where  each $h_{ij}$ has degree $\deg(h_{ij})=rn\cdot\dd$.
		Following this inductive process, we have that for each $q \ge r+1$ we can write
		$$
		w^{q} = \sum_{\substack{\beta}} h_{\beta}{\mathbf{F}}^{\beta},
		$$
		where $\deg(h_\beta)=rn\cdot\dd$ for each multi-index $\beta$.
		This gives us the claim that $w^q \in S \cdot R_{rn\cdot \dd}$ for each $q \ge r+1$.
	\end{proof}
\end{proposition}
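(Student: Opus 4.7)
The plan is to establish the two implications separately. The ``if'' direction will follow essentially immediately from \autoref{theorem_factorize_rat_map}, whereas the ``only if'' direction requires an integral-dependence argument that crucially exploits the fact that the homogeneous coordinate ring of the target $\PP^\delta$ is a polynomial ring, hence integrally closed.

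For the ``if'' direction I would observe that the hypothesis ${\left[I^n\right]}_{n\cdot\dd}={\left[{(I^n)}^{\sat}\right]}_{n\cdot\dd}$ for every $n\ge 1$ is precisely the equality $S=\SSS$. Because the target $\PP^\delta$ has coordinate ring $S=\kk[y_0,\ldots,y_\delta]$, which is a polynomial ring and therefore integrally closed, \autoref{theorem_factorize_rat_map}$(iv)$ applies and directly yields the birationality of $\FF$.

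For the converse, my plan is to assume $\FF$ birational, fix $n\ge 1$ and $w\in {\left[{(I^n)}^{\sat}\right]}_{n\cdot\dd}$, and to view $w$ inside the multi-Veronese subring $T=\kk[R_{\dd}]$. Identifying $S$ with $\kk[I_{\dd}]\subset T$, \autoref{lem_deg_map_field_deg} gives $[T:S]=\deg(\FF)=1$, so $\Quot(S)=\Quot(T)$ and there are canonical inclusions $S\subset T\subset\Quot(S)$. Since $S$ is a polynomial ring, hence integrally closed in its own quotient field, it will suffice to prove that $w$ is integral over $S$: this then forces $w\in S_n\cong {\left[I^n\right]}_{n\cdot\dd}$, as required.

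To show integrality I would argue that $S[w]$ is a finitely generated $S$-module. The saturation hypothesis produces an integer $r\ge 1$ for which the key inclusion
$$
R_{rn\cdot\dd}\cdot w \;\subset\; {\left[I^n\right]}_{(r+1)n\cdot\dd}
$$
holds. Fixing a generating set $\{F_1,\ldots,F_c\}\subset S_n$ of $I^n$ in multi-degree $n\cdot\dd$, I would then establish by induction on $q\ge r+1$ that $w^q\in S\cdot R_{rn\cdot\dd}$: the base case $q=r+1$ uses $w^r\in R_{rn\cdot\dd}$ to rewrite $w^{r+1}=w\cdot w^r$ as a combination $\sum h_i F_i$ with $h_i\in R_{rn\cdot\dd}$, and the inductive step multiplies such an expression by $w$ and applies the key inclusion to each product $h_i w$, rewriting it as a new combination $\sum_j h_{ij}F_j$ with $h_{ij}\in R_{rn\cdot\dd}$. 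The conclusion is that $S[w]$ lies inside the finitely generated $S$-module $S+Sw+\cdots+Sw^r+S\cdot R_{rn\cdot\dd}$, giving the desired integrality. The main obstacle is purely combinatorial bookkeeping: ensuring that the multi-degrees line up at every step of the induction so that each coefficient produced genuinely lands in $R_{rn\cdot\dd}$.
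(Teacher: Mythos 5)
Your proposal is correct and follows essentially the same route as the paper: the easy direction via \autoref{theorem_factorize_rat_map} and, for the converse, the identification $S\cong\kk[I_{\dd}]\subset T=\kk[R_{\dd}]$ with $[T:S]=\deg(\FF)=1$ from \autoref{lem_deg_map_field_deg}, followed by the same integral-dependence argument showing $S[w]$ sits inside a finitely generated $S$-module and using that the polynomial ring $S$ is integrally closed. The only cosmetic difference is that you invoke part $(iv)$ of \autoref{theorem_factorize_rat_map} for the easy direction where the paper uses part $(iii)$; this is logically fine, though citing $(iii)$ better fits the paper's stated aim of giving an independent re-proof of the equivalence in $(iv)$.
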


From \autoref{birationality_implies_sat}
we deduce that for single-graded birational maps with non saturated base ideal,  the module $I^{\sat}/I$ is generated by elements of degree $\ge d+1$.

\begin{corollary}
	Let $\FF:\PP^r \dashrightarrow \PP^r$ be a birational map whose base ideal $I=(f_0,\cdots,f_r)$ is given by  $r+1$ relatively prime polynomials of the same degree $d$.
	Then, we have that 
	$$
	{\left[I^{\sat}/I\right]}_{\le d}=0.
	$$
	\begin{proof}
		From \autoref{birationality_implies_sat} we already have ${\left[I^{\sat}\right]}_d=I_d$.
		If we assume that there exists $0\neq h \in {\left[I^{\sat}\right]}_{d-1}$, then we get the contradiction $I_d=(x_0h,x_1h,\cdots,x_rh)$.
		Therefore, we have ${\left[I^{\sat}/I\right]}_{\le d}=0$.
	\end{proof}
\end{corollary}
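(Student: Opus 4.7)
The plan is to apply \autoref{birationality_implies_sat} at level $n = 1$ to identify $\left[I^{\sat}\right]_d = I_d$, and then to reduce the vanishing of the lower-degree strata of $I^{\sat}$ to a dimension count inside $R_d$. Since $I$ is equigenerated in degree $d$, we have $\left[I\right]_e = 0$ for every $e < d$, so the desired conclusion $\left[I^{\sat}/I\right]_{\le d} = 0$ is equivalent to proving $\left[I^{\sat}\right]_e = 0$ for all $e < d$ together with $\left[I^{\sat}\right]_d = I_d$. The top-degree equality is immediate from \autoref{birationality_implies_sat} applied in the single-graded setting $m=1$ and $n=1$, using that $\FF$ is birational.

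For the degree $d-1$ piece, I would argue by contradiction: suppose there exists $0 \neq h \in \left[I^{\sat}\right]_{d-1}$. Because $I^{\sat}$ is an ideal, $x_i h \in \left[I^{\sat}\right]_d = I_d$ for every $i = 0,\ldots,r$. Dominance of $\FF$ forces $f_0,\ldots,f_r$ to be $\kk$-linearly independent in $R_d$, and since $I$ is generated in degree $d$ by exactly these $r+1$ forms, $\dim_{\kk} I_d = r+1$. On the other hand, $R$ being a domain together with $h \neq 0$ ensure that $x_0 h,\ldots,x_r h$ are $\kk$-linearly independent, hence span an $(r+1)$-dimensional subspace of $I_d$ that must coincide with $I_d$. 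It follows that each $f_i$ admits an expression $f_i = \ell_i h$ with $\ell_i \in R_1$, so $h$ is a common divisor of $f_0,\ldots,f_r$ of degree $d-1 \ge 1$, contradicting the hypothesis that the $f_i$ are relatively prime.

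The vanishing in the remaining degrees $e < d-1$ then follows by propagation: for any $h \in \left[I^{\sat}\right]_e$, we have $x_0^{d-1-e} h \in \left[I^{\sat}\right]_{d-1} = 0$, and $R$ being a domain forces $h = 0$. The single delicate step is the dimension count in the middle paragraph; it is sharp and makes essential use of both the equigeneracy of $I$ and the linear independence of $f_0,\ldots,f_r$, features that come automatically from the hypotheses. I expect no further obstacles.
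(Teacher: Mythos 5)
Your proposal is correct and follows essentially the same route as the paper's proof: invoke \autoref{birationality_implies_sat} to get ${\left[I^{\sat}\right]}_d=I_d$, then derive a contradiction from a nonzero $h\in{\left[I^{\sat}\right]}_{d-1}$ by showing $I_d$ would equal the span of $x_0h,\ldots,x_rh$, forcing $h$ to be a common divisor of the $f_i$. You merely make explicit the dimension count and the propagation to degrees below $d-1$, which the paper leaves implicit.
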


For multi-graded birational maps the previous condition must not be necessarily satisfied. 

\begin{example}
	Let $\FF:\PP^1 \times \PP^1 \dashrightarrow \PP^2$ be the birational map given by 
	$$
	(x_{1,0}:x_{1,1}) \times (x_{2,0}:x_{2,1}) \mapsto (x_{1,0}x_{2,0}:x_{1,1}x_{2,0}:x_{1,1}x_{2,1}).
	$$
	Here, the base ideal $I=(x_{1,0}x_{2,0},\,x_{1,1}x_{2,0},\,x_{1,1}x_{2,1})$ is generated by forms of bi-degree $(1,1)$ and $\nn=(x_{1,0},x_{1,1}) \cap (x_{2,0},x_{2,1})$.
	The map $\FF$ is birational, but we have that $I^{\sat}=(I:\nn^{\infty})=(x_{1,1},x_{2,0})$ and so
	$$
	{\left[I^{\sat}/I\right]}_{(1,0)} \neq 0 \qquad \text{and} \qquad {\left[I^{\sat}/I\right]}_{(0,1)} \neq 0.
	$$
\end{example}

From now on, we focus on a dominant rational map of the form 
$
\FF:\PP^1\times\PP^1 \dashrightarrow \PP^2.
$
We shall adapt our previous results to this case and obtain a general upper bound for the degree of $\FF$.
More interestingly, we give a criterion for birationality when the bi-degrees of the $f_i$'s are of the form $\dd=(d_1,d_2)$ and $d_1=1$.
This result extends the work of \cite{EFFECTIVE_BIGRAD}, where a criterion was given for the bi-degrees $(1,1)$ and $(1,2)$.
Also, in the case $\dd=(d_1,d_2)=(2,2)$ we provide a general characterization for the birationality of $\FF$ (see \cite[Theorem 16]{EFFECTIVE_BIGRAD} for a more specific result).

\begin{proposition}
	\label{upper_bound_degree_P1_P1}
	Let $\FF:\PP^1\times\PP^1 \dashrightarrow \PP^2$ be a dominant rational map such that $\dim(\BB)=0$.
	Then, we have the inequality
	$$
	\deg(\FF) \le 1 + (d_1-1)(d_2-1) + \dim_{\kk}\Big({\left[I^{\sat}/I\right]}_{\dd}\Big).
	$$
	\begin{proof}
		From \autoref{degree_in_terms_of_Sym}$(iii)$ we have the inequality
		$$
		\deg(\FF) \le 1+ \dim_{\kk}\Big({\left[\HH_{\nn}^3(Z_2)\right]}_{2\cdot\dd}\Big) + 	\dim_{\kk}\Big({\left[\HH_{\nn}^2(Z_1)\right]}_{\dd}\Big) + \dim_{\kk}\Big({\left[\HH_{\nn}^1(Z_0)\right]}_{\dd}\Big).
		$$
		By \autoref{local_cohom_multi_grad} and the fact that $Z_0\cong R$ and $Z_2\cong R(-3\cdot \dd)$,  we obtain the isomorphisms $\HH_{\nn}^1(Z_0)=0$ and 
		$$
		\HH_{\nn}^3(Z_2)\cong \HH_{\nn}^3(R)(-3\cdot\dd)\cong \left(\frac{1}{\xx_1}\kk[\xx_1^{-1}]\right)(-3d_1) \otimes_{\kk} \left(\frac{1}{\xx_2}\kk[\xx_2^{-1}]\right)(-3d_2).
		$$
		Thus, we get that
		$$
		\dim_{\kk}\Big({\left[\HH_{\nn}^3(Z_2)\right]}_{2\cdot\dd}\Big)=\dim_{\kk}\left({\left[\frac{1}{\xx_1}\kk[\xx_1^{-1}]\right]}_{-d_1} \otimes_{\kk} {\left[\frac{1}{\xx_2}\kk[\xx_2^{-1}]\right]}_{-d_2}\right)=(d_1-1)(d_2-1).
		$$ 
		The exact sequences
		\begin{align*}
		0 \rightarrow Z_1 \rightarrow R^{3}&(-d_1,-d_2) \rightarrow I \rightarrow 0\\
		0 \rightarrow I \rightarrow &R \rightarrow R/I \rightarrow 0
		\end{align*} 
		and \autoref{local_cohom_multi_grad} yield the isomorphisms 
		$$
		\dim_{\kk}\big({\left[\HH_{\nn}^{2}(Z_1)\right]}_{\dd}\big) = \dim_{\kk}\big({\left[\HH_{\nn}^1(I)\right]}_{\dd}\big)=\dim_{\kk}\big({\left[\HH_{\nn}^0(R/I)\right]}_{\dd}\big)=\dim_{\kk}\Big({\left[I^{\text{sat}}/I\right]}_{\dd}\Big).
		$$
		Therefore, by combining these computations, we get the claimed  upper bound.
	\end{proof}
\end{proposition}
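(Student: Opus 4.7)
The plan is to specialize \autoref{degree_in_terms_of_Sym}$(iii)$ to the present setting where $\delta=2$, obtaining
$$
\deg(\FF)\;\le\; 1 + \sum_{i=0}^{2} \dim_{\kk}\bigl({[\HH_{\nn}^{i+1}(Z_i)]}_{i\cdot \dd}\bigr),
$$
and then to identify each of the three summands using the explicit description of $\HH_{\nn}^{*}(R)$ from \autoref{local_cohom_multi_grad}. The $i=0$ term is immediate: $Z_0\cong R$ and $\HH_{\nn}^1(R)=0$, so it does not contribute.

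For the $i=2$ contribution, I would first argue that $\dim(\BB)=0$ forces $\grade(I,R)\ge 2$. Indeed, every minimal prime of $I$ in $R$ either contains $\nn$, hence contains one of the two height-$2$ components $(\xx_1)$ or $(\xx_2)$ of $\nn$, or else does not contain $\nn$ and corresponds to a point of $\BB\subset \PP^1\times\PP^1$, whose defining prime in $R$ again has height $2$. Since $R$ is Cohen--Macaulay, grade equals height, and the Koszul complex on $f_0,f_1,f_2$ then satisfies $H_2=0$. Because $R$ is a domain and the $f_i$'s are nonzero, $d_3:K_3\to K_2$ is injective, so $Z_2\cong K_3\cong R(-3\dd)$. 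Restricting $\HH_{\nn}^3(R)(-3\dd)$ to multi-degree $2\dd$ and applying \autoref{local_cohom_multi_grad} reduces the count to the dimension of $\bigl[\tfrac{1}{\xx_1}\kk[\xx_1^{-1}]\bigr]_{-d_1}\otimes_{\kk}\bigl[\tfrac{1}{\xx_2}\kk[\xx_2^{-1}]\bigr]_{-d_2}$, which equals $(d_1-1)(d_2-1)$ by counting pairs $(a,b)$ with $a,b\ge 1$ and $a+b=d_i$ in each factor.

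The $i=1$ contribution is where the saturation term appears. Starting from the syzygy presentation $0\to Z_1\to R^3(-\dd)\to I\to 0$, the long exact sequence in local cohomology at multi-degree $\dd$ has bounding terms $[\HH_{\nn}^1(R)]_0^3=0$ and $[\HH_{\nn}^2(R)]_0^3=0$; the latter vanishes because every summand of $\HH_{\nn}^2(R)$ produced by \autoref{local_cohom_multi_grad} contains a factor $\tfrac{1}{\xx_i}\kk[\xx_i^{-1}]$ that has no nonzero element in degree $0$. Hence $[\HH_{\nn}^2(Z_1)]_{\dd}\cong [\HH_{\nn}^1(I)]_{\dd}$, and the short exact sequence $0\to I\to R\to R/I\to 0$ combined with $\HH_{\nn}^0(R)=\HH_{\nn}^1(R)=0$ yields $[\HH_{\nn}^1(I)]_{\dd}\cong [\HH_{\nn}^0(R/I)]_{\dd}=[I^{\sat}/I]_{\dd}$. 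Summing the three contributions produces the stated inequality. I expect the main obstacle to be the $i=2$ step: although $f_0,f_1,f_2$ need not form a regular sequence, the hypothesis $\dim(\BB)=0$ is exactly what is needed to control $\grade(I,R)$ through the structure of the minimal primes, and therefore to kill $H_2$ and upgrade the tautological inclusion $K_3\hookrightarrow Z_2$ into an honest isomorphism.
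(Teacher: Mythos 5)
Your proof is correct and follows essentially the same route as the paper: specialize \autoref{degree_in_terms_of_Sym}$(iii)$ to $\delta=2$, kill the $i=0$ term and evaluate the $i=2$ term via \autoref{local_cohom_multi_grad}, and identify the $i=1$ term with $\dim_{\kk}\big({\left[I^{\sat}/I\right]}_{\dd}\big)$ through the two short exact sequences for $Z_1$ and $I$. The only difference is that you explicitly justify $Z_2\cong R(-3\cdot\dd)$ by showing that $\dim(\BB)=0$ forces $\grade(I)\ge 2$, hence $H_2=0$, a step the paper's proof uses without comment.
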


\begin{theorem}
	\label{thm_bigrad_map_1_n}
	Let $\FF:\PP^1\times\PP^1 \dashrightarrow \PP^2$ be a dominant rational map such that $\dim(\BB)=0$ and $\dd=(1,d_2)$.
	Then, $\FF$ is birational if and only if $I_{\dd}={\left[I^{\sat}\right]}_{\dd}$.
	\begin{proof}
		We get one implication from \autoref{birationality_implies_sat} and the other by specializing the data in the inequality of \autoref{upper_bound_degree_P1_P1}.
	\end{proof}
\end{theorem}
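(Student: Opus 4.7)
The plan is a short two-implication proof that simply specializes two earlier results to the case $\dd=(1,d_2)$.

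For the forward direction, I would apply \autoref{birationality_implies_sat} directly: if $\FF$ is birational, then ${\left[I^n\right]}_{n\cdot\dd}={\left[(I^n)^{\sat}\right]}_{n\cdot\dd}$ for every $n\ge 1$, so in particular the $n=1$ case yields $I_{\dd}=\left[I^{\sat}\right]_{\dd}$. This half needs no extra work beyond invoking the proposition.

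For the reverse direction, I would specialize \autoref{upper_bound_degree_P1_P1} at $d_1=1$. The inequality
\begin{equation*}
\deg(\FF)\;\le\; 1 + (d_1-1)(d_2-1) + \dim_{\kk}\Big({\left[I^{\sat}/I\right]}_{\dd}\Big)
\end{equation*}
collapses because the middle term $(d_1-1)(d_2-1)$ vanishes when $d_1=1$. The hypothesis $I_{\dd}=\left[I^{\sat}\right]_{\dd}$ forces $\left[I^{\sat}/I\right]_{\dd}=0$, hence $\deg(\FF)\le 1$. Since $\FF$ is dominant and $\dim(X)=\dim(\PP^2)=2$, the degree is a positive integer, so $\deg(\FF)=1$ and $\FF$ is birational.

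There is essentially no obstacle here: the entire content of the theorem has been packaged into \autoref{birationality_implies_sat} and \autoref{upper_bound_degree_P1_P1}, and the new input is only the numerical observation that the cohomological term coming from $\HH_{\nn}^3(Z_2)$ disappears exactly when $d_1=1$ (or symmetrically $d_2=1$). The only mild point worth flagging is that the hypothesis $\dim(\BB)=0$ is needed to invoke \autoref{upper_bound_degree_P1_P1} in the reverse direction, while it is not needed for the forward direction.
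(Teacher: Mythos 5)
Your proposal is correct and follows exactly the paper's own route: the forward implication is \autoref{birationality_implies_sat} specialized to $n=1$, and the converse is the inequality of \autoref{upper_bound_degree_P1_P1} with $d_1=1$, where the term $(d_1-1)(d_2-1)$ vanishes and the hypothesis kills $\dim_{\kk}\big({\left[I^{\sat}/I\right]}_{\dd}\big)$, forcing $\deg(\FF)=1$. Your remark that $\dim(\BB)=0$ is only needed for the converse is a fair observation but does not change the argument.
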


To illustrate this theorem, let $\FF$ be as above and assume moreover that there exists a nonzero syzygy of $I$ of bi-degree $(0,1)$. As in \cite[Remark 10]{EFFECTIVE_BIGRAD}, we deduce that $x_{2,0}(\sum_{i=0}^2\alpha_if_i)-x_{2,1}(\sum_{i=0}^2\beta_if_i)=0$
for some $\alpha_i$'s and $\beta_i$'s in $\kk$ and hence we deduce that there exist three polynomials $p,q,r$ of bi-degree $(1,d_2-1)$ such that $I=(x_{2,0}p,\, x_{2,1}p,\,x_{2,0}q+x_{2,1}r)$. Therefore, the ideal $I$ admits a Hilbert-Burch presentation of the form 
$$ F_\bullet: \quad0 \rightarrow R(-1,-1-d_2)\oplus R(-2,-2d_2+1) \rightarrow R(-1,-d_2)^3 \rightarrow R.$$
Studying the two spectral sequences coming from the double complex $F_\bullet \otimes_{R} C_{\nn}^\bullet$, together with  \autoref{local_cohom_multi_grad}, it is then easy to see that 
${\left[I^{\sat}/I\right]}_{\dd} \cong {\left[\HH^0_{\nn}(R/I)\right]}_{\dd}
=0$. 
Thus, \autoref{thm_bigrad_map_1_n} implies that $\FF$ is birational, a fact that can be deduced more directly and that is the main ingredient to ensure birationality in  \cite{SEDERBERG20161}. But \autoref{thm_bigrad_map_1_n} provides actually a finer result. 
Indeed, suppose that the ideal $I$ admits the following more general Hilbert-Burch presentation
 $$ 0 \rightarrow R(-1,-\mu-d_2)\oplus R(-2,-2d_2+\mu) \rightarrow R(-1,-d_2)^3 \rightarrow R$$
where $\mu$ is a positive integer. Then, a similar computation shows that 
${\left[\HH^0_{\nn}(R/I)\right]}_{\dd}\cong
 {\left[\HH^2_{\nn}(R)\right]}_{(0,-\mu)}$ and from here we deduce that $\FF$ cannot be a birational map if $\mu>1$.

\begin{lemma}
	\label{lem_deg_six}
	Let $\FF:\PP^1\times\PP^1 \dashrightarrow \PP^2$ be a dominant rational map such that $\dim(\BB)=0$ and $\dd=(2,2)$.
	Then, 
	$I_{\dd}={\left[I^{\sat}\right]}_{\dd}$ if and only if $\deg(\BB)=6$.
	\begin{proof}
		From \autoref{relation_local_sheaf_cohom_zero} we have the short exact sequence 
		$$
		0 \rightarrow {\left[\HH_{\nn}^0(R/I)\right]}_{\dd} \rightarrow {\left[R/I\right]}_{\dd} \rightarrow \HH^0\left(X, \left(\OO_X/I^\sim\right) (\dd)\right) \rightarrow {\left[\HH_{\nn}^1(R/I)\right]}_{\dd} \rightarrow 0.
		$$
		Using \cite[Lemma 5]{EFFECTIVE_BIGRAD} we deduce that ${\left[\HH_{\nn}^1(R/I)\right]}_{\dd}=0$.
		Therefore, we obtain
		\begin{align*}
		\deg(\BB) &= \dim_{\kk}\left(\HH^0\left(X, \left(\OO_X/I^\sim\right) (\dd)\right)\right)\\
		&=\dim_{\kk}\left({\left[R/I\right]}_{\dd}\right)- \dim_{\kk}\left({\left[\HH_{\nn}^0(R/I)\right]}_{\dd}\right) = 6 - \dim_{\kk}\left({\left[\HH_{\nn}^0(R/I)\right]}_{\dd}\right) 		
		\end{align*}
		from the exact sequence above,
		and so the claimed result follows.
	\end{proof}
\end{lemma}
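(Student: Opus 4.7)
The plan is to reformulate both sides of the claimed equivalence as a single cohomological condition on $R/I$. First, observe that $I_{\dd}=[I^{\sat}]_{\dd}$ is equivalent to $[I^{\sat}/I]_{\dd}=0$, and since $I^{\sat}/I \cong \HH_{\nn}^0(R/I)$ we can rephrase the left-hand side as $[\HH_{\nn}^0(R/I)]_{\dd}=0$. The strategy is then to show that $\deg(\BB)$ and $\dim_{\kk}[\HH_{\nn}^0(R/I)]_{\dd}$ add up to a constant, namely $6$, so that one vanishes iff the other equals $6$.

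The main tool will be the four-term exact sequence coming from \autoref{relation_local_sheaf_cohom_zero} applied to $R/I$ in multi-degree $\dd$:
$$ 0 \to [\HH_{\nn}^0(R/I)]_{\dd} \to [R/I]_{\dd} \to \HH^0(X, (\OO_X/I^\sim)(\dd)) \to [\HH_{\nn}^1(R/I)]_{\dd} \to 0. $$
Since $\BB$ is zero-dimensional, the sheaf $\OO_X/I^\sim$ has finite support, and twisting by $\OO_X(\dd)$ leaves the global sections unchanged in dimension (the twist is locally free of rank one near each isolated point of the support). Thus $\dim_{\kk}\HH^0(X, (\OO_X/I^\sim)(\dd))=\deg(\BB)$ by the definition of the degree of $\BB$.

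Next I would compute $\dim_{\kk}[R/I]_{\dd}=6$. Here $R_{(2,2)}$ has $\kk$-dimension $9$ (as a tensor product of two copies of a $3$-dimensional space), and the three generators $f_0,f_1,f_2$ of $I$ are linearly independent in this space because $\FF$ is dominant and hence the $f_i$ cannot satisfy a linear relation. So $\dim_{\kk} I_{(2,2)}=3$ and $\dim_{\kk}[R/I]_{(2,2)}=6$.

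The last ingredient, and the part I expect to be the only delicate one, is the vanishing $[\HH_{\nn}^1(R/I)]_{\dd}=0$. I would invoke \cite[Lemma~5]{EFFECTIVE_BIGRAD} for this (it is precisely a statement that local cohomology of $R/I$ vanishes in the appropriate bi-degree under these hypotheses). Once this is in hand, the exact sequence collapses to
$$ \deg(\BB) = 6 - \dim_{\kk}[\HH_{\nn}^0(R/I)]_{\dd}, $$
and the desired equivalence $I_{\dd}=[I^{\sat}]_{\dd} \Longleftrightarrow \deg(\BB)=6$ follows at once.
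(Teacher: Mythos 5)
Your proposal is correct and follows essentially the same route as the paper: the four-term exact sequence from sheaf/local cohomology applied to $R/I$ in degree $\dd$, the vanishing $[\HH_{\nn}^1(R/I)]_{\dd}=0$ via the same cited lemma, and the count $\dim_{\kk}[R/I]_{\dd}=9-3=6$. The extra details you supply (the twist not affecting global sections on a zero-dimensional support, and linear independence of the $f_i$ from dominance) are exactly the facts the paper uses implicitly.
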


\begin{theorem}
	\label{thm_bigrad_map_2_2}
	Let $\FF:\PP^1\times\PP^1 \dashrightarrow \PP^2$ be a dominant rational map.
	Suppose that $\dim(\BB)=0$ and $\dd=(2,2)$.
	Then, $\FF$ is birational if and only if the following conditions are satisfied:
	\begin{enumerate}[(i)]
		\item $I_{\dd}={\left[I^{\sat}\right]}_{\dd}$.
		\item $I$ is not locally a complete intersection at its minimal primes.
	\end{enumerate}
	\begin{proof}
		The degree formula of \autoref{thm_degree_formula_base_points} applied in our setting gives
		$$
		\deg(\FF) = 8 - e(\BB).
		$$
		Hence, we deduce that $e(\BB)\le 7$ and that $\FF$ is birational if and only if $e(\BB)=7$.
		We know that 
		$
		\deg(\BB) \le e(\BB),
		$		
		and that $\deg(\BB) = e(\BB)$ if and only if $I$ is locally a complete intersection at its minimal primes.		
		Moreover, we have already seen that the  condition $I_{\dd}={\left[I^{\sat}\right]}_{\dd}$ is necessary for the birationality of $\FF$ (\autoref{birationality_implies_sat}) and that it is equivalent to $\deg(\BB)=6$ (\autoref{lem_deg_six}).
		Therefore, assuming $I_{\dd}={\left[I^{\sat}\right]}_{\dd}$, we have that $I$ is not locally a complete intersection at its minimal primes
		if and only if
		$$
		6=\deg(\BB) < e(\BB)=7,
		$$ 
		and the later one is equivalent to the birationality of $\FF$.
	\end{proof}
\end{theorem}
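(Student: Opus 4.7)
The plan is to reduce everything to the degree formula from \autoref{thm_degree_formula_base_points}, which in our setting (with $X=\PP^1\times\PP^1$, $Y=\PP^2$, $\mathbf{d}=(2,2)$) specializes to
$$\deg(\FF) \;=\; d_1^{\delta_1}d_2^{\delta_2}\deg_{\PP^N}(X)\,-\,e(\BB)\;=\;4\cdot 2 - e(\BB)\;=\;8-e(\BB).$$
Since $\deg(\FF)\ge 1$, this immediately gives $e(\BB)\le 7$, and the map is birational if and only if $e(\BB)=7$. The whole theorem is then a matter of reading off when $e(\BB)=7$ from the two conditions (i) and (ii).

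Next, I would recall the standard inequality $\deg(\BB)\le e(\BB)$, with equality precisely when $I$ is locally a complete intersection at each of its minimal primes; this is a classical fact about Hilbert--Samuel multiplicities for zero-dimensional schemes. Therefore (ii) is equivalent to the strict inequality $\deg(\BB)<e(\BB)$.

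Now I would use the two preparatory results to translate condition (i) into a numerical statement. On the one hand, \autoref{birationality_implies_sat} guarantees that (i) is a necessary consequence of birationality (taking $n=1$ in that proposition). On the other hand, \autoref{lem_deg_six} identifies condition (i) with the equality $\deg(\BB)=6$. Assembling these ingredients: if $\FF$ is birational then $e(\BB)=7$, so (i) holds via $\deg(\BB)=6$, and $6=\deg(\BB)<e(\BB)=7$ forces (ii). Conversely, if (i) and (ii) both hold, then $\deg(\BB)=6$ and $\deg(\BB)<e(\BB)$, hence $e(\BB)\ge 7$; combined with $e(\BB)\le 7$ this forces $e(\BB)=7$, and thus $\deg(\FF)=1$.

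There is not really a hard step here: once the degree formula is in place, the argument is a short numerical squeeze using $\deg(\BB)\le e(\BB)\le 7$ and $\deg(\BB)=6$. The only subtle point is making sure that \autoref{birationality_implies_sat} is invoked only in the direction ``birational $\Rightarrow$ (i)'' and not as an ``iff'' at level $n=1$, since a priori that proposition requires the equality $[I^n]_{n\mathbf{d}}=[(I^n)^{\sat}]_{n\mathbf{d}}$ for every $n$; this is precisely why the extra hypothesis (ii) is needed for the converse implication.
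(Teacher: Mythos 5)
Your proposal is correct and follows essentially the same route as the paper's proof: the degree formula $\deg(\FF)=8-e(\BB)$, the equivalence of (ii) with $\deg(\BB)<e(\BB)$, and the identification of (i) with $\deg(\BB)=6$ via \autoref{birationality_implies_sat} and \autoref{lem_deg_six}, followed by the numerical squeeze $6=\deg(\BB)<e(\BB)\le 7$. Your closing remark about using \autoref{birationality_implies_sat} only in the direction ``birational $\Rightarrow$ (i)'' matches exactly how the paper invokes it.
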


\subsection{An explicit upper bound for the degree of a rational map defined over a projective space}
\label{subsection_single_grad_rat_maps}

In this subsection we consider the more specific case of single-graded  dominant rational maps. The main result here is  \autoref{graded_case_degree_in_terms_of_Sym} where the upper bound for the degree of a rational map given in \autoref{degree_in_terms_of_Sym}$(iii)$, is expressed solely in terms of the Hilbert functions of $R/I$ and $I^{\sat}/I$, instead of some local cohomology modules of Koszul cycles. We also show that this bound is sharp in some cases. We set the following notation.

\begin{notation}
	\label{nota_single_graded}
	Let $R$ be the standard graded polynomial ring $R=\kk[x_0,x_1,\ldots,x_r]$, and $\mm$ be the maximal irrelevant ideal $\mm=(x_0,\ldots,x_r)$.
	Let  $\mathcal{F} : \PP^r \dashrightarrow  \PP^r$ be a dominant rational map defined by $r+1$ homogeneous polynomials $\mathbf{f}=\{f_0, f_1, \ldots, f_r\} \subset R$ of the same degree $d$.
	Let $I \subset R$ be the homogeneous ideal generated by $f_0,f_1,\ldots,f_r$.
	Let $S$ be the standard graded polynomial ring $S = \kk[y_0,y_1,\ldots,y_r]$.
	Let $\AAA$ be the bigraded polynomial ring $\AAA = R \otimes_{\kk} S$, where $\bideg(x_i)=(1,0)$ and $\bideg(y_j)=(0,1)$. For any graded $R$-module $M$, we set $M^\vee={}^*\Hom_{\kk}(M,\kk)$ to be the graded Matlis dual of $M$ (see e.g.~\cite[Section 3.6]{BRUNS_HERZOG}).
\end{notation}

 The following lemma is equivalent to \cite[Lemma 1]{LAURENT_CHARDIN_IMPLICIT} in our setting; we include a proof for the sake of completeness and the convenience of the reader. 

\begin{lemma}
	\label{lem_local_cohom_Kosz_cycles}
	Let $Z_i$ and $H_i$ be the cycles and homology modules of the Koszul complex $K(\mathbf{f}; R)$, respectively.
	Assume that $\dim(R/I) \le 1$ and let $\xi = (r+1)(d-1)$.
	Then, 
	\begin{enumerate}[(i)]
		\item $Z_{r+1}=0$, $Z_{r}\cong R\left(-(r+1)d\right)$, $Z_0 = R$, $H_i=0$ for $i>1$,
		$H_1=0$ if and only if $\dim(R/I)=0$.
		If $\dim(R/I)=1$, then $H_1\cong \omega_{R/I}(-\xi)$.
		\item If $r \ge 2$ and $1 \le p < r$, then
		$$
		\HH_{\mm}^q(Z_p) \cong \begin{cases}
		\HH_{\mm}^{q-2}(R/I) 				 \qquad \;\;\text{if } p  = 1 \text{ and } q \le r \\
		H_{q-p-1}^\vee(-\xi) \qquad \text{if } 2 \le p < r \text{ and } q \le r\\
		Z_{r-p}^\vee(-\xi) \qquad \;\;\;\;\text{if } q = r+1.\\				
		\end{cases}
		$$
	\end{enumerate}
	\begin{proof}
		$(i)$ This part follows from well-known properties of the Koszul complex (see e.g.~\cite[Section 1.6]{BRUNS_HERZOG}).
		
		$(ii)$ We only need to compute the local cohomology modules of $Z_p$ for $1 \le p < r$.
		
		Let $2 \le \ell < r$. We denote by $K_{\bullet}^{>\ell}$ the truncated Koszul complex
		$$
		K_{\bullet}^{>\ell}: \quad 0 \rightarrow K_{r+1} \rightarrow K_r \rightarrow \cdots \rightarrow  K_{\ell+1} \rightarrow Z_{\ell} \rightarrow 0,
		$$
		which is exact from the condition $\dim(R/I) \le 1$.
		Let $F^{\bullet,\bullet}$ be the double complex given by $F^{\bullet,\bullet} = K_{\bullet}^{>\ell} \otimes_{R} C_{\mm}^{\bullet}$.
		The exactness of $				K_{\bullet}^{>\ell}$ implies that $\HH^{\bullet}\left(\text{Tot}(F^{\bullet,\bullet})\right)=0$.
		Hence computing with the first filtration we get the spectral sequence ${}^{\text{I}}E_{1}^{-p,q}=\HH_{\mm}^q(K_p^{>\ell}) \Rightarrow 0$, which at the first page is given by
		\begin{center}		
			\begin{tikzpicture}
			\matrix (m) [matrix of math nodes,row sep=1em,column sep=2em,minimum width=2em, text height=1.5ex, text depth=0.25ex]
			{
				\HH_{\mm}^{r+1}(K_{r+1}) &  \HH_{\mm}^{r+1}(K_{r}) & \cdots & \HH_{\mm}^{r+1}(K_{\ell+1}) & \HH_{\mm}^{r+1}(Z_\ell)\\
				0 &  0 & \cdots & 0  & \HH_{\mm}^{r}(Z_\ell)\\
				\vdots & \vdots &  & \vdots & \vdots & \\
				0 & 0 & \cdots  & 0  & \HH_{\mm}^0(Z_\ell).\\
			};
			\path[-stealth]
			(m-1-1) edge (m-1-2)
			(m-1-2) edge (m-1-3)
			(m-1-3) edge (m-1-4)
			(m-1-4) edge (m-1-5)
			;
			\end{tikzpicture}	
		\end{center}
		From the graded local duality theorem (see e.g.~\cite[Theorem 3.6.19]{BRUNS_HERZOG}) and the self-duality  of the Koszul complex, we have the following isomorphisms of complexes
		$$
		\HH_{\mm}^{r+1}(K_{\bullet}) \cong {\big( \Hom_R\left(K_{\bullet},R(-r-1)\right) \big)}^{\vee} \cong {\big(K_\bullet[r+1]\left((r+1)d-r-1\right)\big)}^\vee  \cong {\big(K_{\bullet}[r+1]\big)}^\vee(-\xi),			 $$
		where $[r+1]$ denotes homological shift degree.
		So the top row of the diagram above is given by the complex
		$$
		K_0^\vee(-\xi) \rightarrow K_1^\vee(-\xi) \rightarrow \cdots \rightarrow 	K_{r+1-(\ell+1)}^\vee(-\xi) \rightarrow \HH_{\mm}^{r+1}(Z_\ell).
		$$	
		For each $q \le r,$ 
		when we compute cohomology in the page $r+3-q,$
		we get the exact sequence 
		$$
		0 \rightarrow {}^{\text{I}}E_{r+3-q}^{-(\ell+r+2-q),r+1} \rightarrow H_{q-\ell-1}^\vee(-\xi) \rightarrow \HH_{\mm}^q(Z_\ell) \rightarrow {}^{\text{I}}E_{r+3-q}^{-\ell,q} \rightarrow 0.
		$$
		Since ${}^{\text{I}}E_{r+3-q}^{-(\ell+r+2-q),r+1}={}^{\text{I}}E_{\infty}^{-(\ell+r+2-q),r+1}=0$ and ${}^{\text{I}}E_{r+3-q}^{-\ell,q}={}^{\text{I}}E_{\infty}^{-\ell,q}=0$, then we get the isomorphism $\HH_{\mm}^q(Z_\ell) \cong H_{q-\ell-1}^\vee(-\xi)$ when $q \le r$.
		
		In the case of $q=r+1$, we have the exact sequence 
		$$
		K_{r+1-(\ell+2)}^\vee(-\xi)  \rightarrow 	K_{r+1-(\ell+1)}^\vee(-\xi) \rightarrow \HH_{\mm}^{r+1}(Z_\ell) \rightarrow 0,
		$$
		that induces the isomorphism $\HH_{\mm}^{r+1}(Z_{\ell})\cong Z_{r-\ell}^\vee(-\xi)$.
		
		When $\ell=1$, we consider the truncated Koszul complex 
		$$
		K_{\bullet}^{>1}: \quad 0 \rightarrow K_{r+1} \rightarrow K_r \rightarrow \cdots \rightarrow  K_{2} \rightarrow Z_{1} \rightarrow 0,
		$$
		that is not exact only at the module $Z_1$.
		The double complex $G^{\bullet,\bullet}=K_{\bullet}^{>1} \otimes_{R} C_{\mm}^{\bullet}$ now yields the spectral sequence 
		$$
		{}^{\text{I}}E_{1}^{-p,q}=\HH_{\mm}^q(K_p^{>\ell})  \Longrightarrow \HH^{-p+q}\left(G^{\bullet,\bullet}\right) 
		= \begin{cases}
		\HH_{\mm}^1(H_1) \quad \text{ if} -p+q=0\\
		0 			\qquad\qquad\; \text{otherwise.}
		\end{cases}
		$$ 
		Thus again we  have the exact sequence
		$$
		K_{r-2}^\vee(-\xi)  \rightarrow 	K_{r-1}^\vee(-\xi) \rightarrow \HH_{\mm}^{r+1}(Z_1) \rightarrow 0,
		$$
		and this gives us the isomorphism $\HH_{\mm}^{r+1}(Z_1)\cong Z_{r-1}^\vee(-\xi)$.
		
		Finally, using the following two short exact sequences
		\begin{align*}
		0 \rightarrow Z_1 \rightarrow K_1 \rightarrow I \rightarrow 0\\
		0 \rightarrow I \rightarrow R \rightarrow R/I \rightarrow 0
		\end{align*}
		we can obtain the isomorphisms
		$
		\HH_{\mm}^{q}(Z_1) \cong \HH_{\mm}^{q-1}(I) \cong \HH_{\mm}^{q-2}(R/I)
		$ for $q \le r$.
	\end{proof}	
\end{lemma}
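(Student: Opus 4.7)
The plan is to prove (i) by unwinding standard Koszul-complex generalities and then derive (ii) from a spectral-sequence argument applied to a truncated Koszul complex tensored with a \v{C}ech complex.

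For part (i), I would start from the identifications $K_i = \bigwedge^i R(-d)^{r+1}$: the top cycle $Z_{r+1}=0$ is immediate, and $Z_r$ is the kernel of $K_r \to K_{r-1}$, hence is generated by the single ``Cramer'' relation of degree $(r+1)d$, so $Z_r\cong R(-(r+1)d)$. The vanishing $H_i=0$ for $i>1$ reflects that $\grade(I)\geq r$ whenever $\dim(R/I)\le 1$, and the $H_1$ vanishes exactly in the complete-intersection case $\grade(I)=r+1$, i.e.~$\dim(R/I)=0$. For the identification $H_1\cong \omega_{R/I}(-\xi)$ in the one-dimensional case, I would appeal to the self-duality of the Koszul complex (viewing it as a twisted dualizing complex for a complete intersection of $r+1$ forms of degree $d$ in $R$) combined with graded local duality: the leftmost surviving homology is canonically the canonical module of $R/I$, up to the twist $\xi=(r+1)(d-1)$ coming from $a(R)=-r-1$ and the sum of degrees $(r+1)d$.

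For part (ii), the main tool is the truncated Koszul complex
\[
K_\bullet^{>\ell}: \quad 0 \to K_{r+1} \to K_r \to \cdots \to K_{\ell+1} \to Z_\ell \to 0,
\]
which, by (i), is acyclic for $\ell\ge 2$ and has sole homology $H_1$ at $Z_1$ when $\ell=1$. Tensoring with the \v{C}ech complex $C_\mm^\bullet$ produces a double complex whose total cohomology is respectively $0$ or $\HH_\mm^\bullet(H_1)$. Computing via the filtration by rows: since each $K_i$ is free, $\HH_\mm^q(K_i)$ vanishes unless $q=r+1$, so on the first page only the top row survives. Graded local duality plus Koszul self-duality then gives
\[
\HH_\mm^{r+1}(K_i)\;\cong\; K_{r+1-i}^\vee(-\xi),
\]
converting the top row into a shifted, dualized truncation of the Koszul complex. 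Running the spectral sequence and matching the abutment $0$ (resp.~$\HH_\mm^\bullet(H_1)$) with what the first filtration converges to yields the isomorphisms $\HH_\mm^q(Z_p)\cong H_{q-p-1}^\vee(-\xi)$ for $q\le r$, $2\le p<r$, and $\HH_\mm^{r+1}(Z_p)\cong Z_{r-p}^\vee(-\xi)$ at the top spot.

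The remaining case $p=1$ with $q\le r$ I would handle by transporting along the two short exact sequences
\[
0 \to Z_1 \to K_1 \to I \to 0, \qquad 0 \to I \to R \to R/I \to 0,
\]
which, using that the free module $K_1$ has local cohomology only in degree $r+1$ and that $R$ itself has $\HH_\mm^q(R)=0$ for $q\le r$, yield the shifts $\HH_\mm^q(Z_1)\cong\HH_\mm^{q-1}(I)\cong\HH_\mm^{q-2}(R/I)$. The main obstacle will be careful bookkeeping of the twists: both the duality isomorphism for $K_i$ and the identification of $H_1$ with $\omega_{R/I}(-\xi)$ introduce the same shift $\xi=(r+1)(d-1)$, and one must check that the edge terms in the spectral sequence (specifically ${}^{\mathrm{I}}E_{r+3-q}^{-(\ell+r+2-q),r+1}$ and ${}^{\mathrm{I}}E_{r+3-q}^{-\ell,q}$) vanish at $E_\infty$, which is precisely where the assumption $\dim(R/I)\le 1$ pays off.
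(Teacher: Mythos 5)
Your proposal is correct and follows essentially the same route as the paper: truncated Koszul complexes $K_\bullet^{>\ell}$ tensored with the \v{C}ech complex $C_\mm^\bullet$, comparison of the two filtrations, graded local duality plus Koszul self-duality to identify the top row as $K_\bullet^\vee(-\xi)$, and the two short exact sequences $0\to Z_1\to K_1\to I\to 0$, $0\to I\to R\to R/I\to 0$ for the case $p=1$, $q\le r$. The only difference is one of detail, not of method: the paper spells out the edge-term vanishing and the separate treatment of $\ell=1$ (where the abutment is $\HH_{\mm}^1(H_1)$) that you flag as the remaining bookkeeping.
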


Since the linear type condition has almost no geometrical meaning, we briefly restate the equality of \autoref{degree_in_terms_of_Sym}$(ii)$ in the locally complete intersection case.

\begin{lemma}
	Let $\FF: \PP^r \dashrightarrow \PP^r$ be a dominant rational map with a dimension $1$ base ideal $I$. 
	If $I$ is locally a complete intersection at its minimal primes then 
	$$
	\deg(\FF) = \rank\Big( {\left[\HH_{\mm}^1\big(\Sym(I)\big)\right]}_0  \Big) + 1.
	$$
	\begin{proof}
		From either \cite[Section 5]{HSV_TRENTO_SCHOOL} or \cite[Proposition 3.7]{Simis_Vasc_Syz_Conormal_Mod} we get that $I$ is of linear type. 
		Thus, the assertion follows from \autoref{degree_in_terms_of_Sym}$(ii)$.
	\end{proof}
\end{lemma}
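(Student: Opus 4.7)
The plan is to reduce this statement to the equality case of \autoref{degree_in_terms_of_Sym}$(ii)$, which asserts that the inequality
$$
\deg(\FF) \le \rank\Big( {\left[\HH_{\mm}^1\big(\Sym(I)\big)\right]}_0 \Big) + 1
$$
becomes an equality whenever $I$ is of linear type (i.e.~when the canonical surjection $\Sym(I) \twoheadrightarrow \Rees(I)$ is an isomorphism). So the entire content of the lemma is to verify that the hypothesis ``locally a complete intersection at its minimal primes'' suffices to force $I$ to be of linear type in the present single-graded setting with $\dim(R/I)=1$.

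First I would recall the general criterion from the theory of approximation complexes: the $\ZZC_\bullet$ complex is acyclic (and in particular $I$ is of linear type) provided that the Koszul homology modules $H_i$ of the generating sequence of $I$ satisfy certain sliding depth-type conditions. In our situation $I$ is generated by $r+1$ forms in a polynomial ring of dimension $r+1$ and $\dim(R/I)=1$, so the generators form an almost complete intersection. Under this numerical setup, the paper of Simis--Vasconcelos on the syzygetic/conormal module (\cite{Simis_Vasc_Syz_Conormal_Mod}, Proposition 3.7) and the general theory of \cite{HSV_TRENTO_SCHOOL} both yield the implication: if $I$ is locally a complete intersection at its minimal primes, then $I$ is of linear type. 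Concretely, the locally complete intersection hypothesis at the minimal primes guarantees that the torsion of $\Sym(I)$ vanishes generically along $V(I)$, and combined with the low codimension situation, this forces the kernel $\EEQ = \HH_I^0(\Sym(I))$ in \autoref{canonical_map_Sym_Rees} to be zero.

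Second, once linear type is established, I apply \autoref{degree_in_terms_of_Sym}$(ii)$ directly, obtaining
$$
\deg(\FF) = \rank\Big( {\left[\HH_{\mm}^1\big(\Sym(I)\big)\right]}_0 \Big) + 1,
$$
as desired.

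The only potentially delicate step is the first one, namely checking that in the stated generality (arbitrary $r$, no further depth assumption) the locally complete intersection condition at the minimal primes implies linear type. This is not an immediate consequence of the definition; it relies on the fact that for an almost complete intersection ideal of low codimension, the Valabrega--Valla or Simis--Vasconcelos type criteria apply. Because the paper's own proof simply invokes \cite[Section 5]{HSV_TRENTO_SCHOOL} or \cite[Proposition 3.7]{Simis_Vasc_Syz_Conormal_Mod} for this implication, I would do the same rather than reprove the result from scratch; the rest of the argument is then just a citation of part $(ii)$ of \autoref{degree_in_terms_of_Sym}.
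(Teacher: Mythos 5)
Your proposal is correct and follows the paper's proof exactly: establish that the locally complete intersection hypothesis (for an almost complete intersection with $\dim(R/I)=1$) forces $I$ to be of linear type by citing \cite[Section 5]{HSV_TRENTO_SCHOOL} or \cite[Proposition 3.7]{Simis_Vasc_Syz_Conormal_Mod}, and then invoke \autoref{degree_in_terms_of_Sym}$(ii)$. The extra commentary on why the cited results apply is accurate but does not change the argument.
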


The next theorem translates \autoref{degree_in_terms_of_Sym}$(iii)$ in terms of the Hilbert functions of $R/I$ and $I^{\sat}/I$.

\begin{theorem}
	\label{graded_case_degree_in_terms_of_Sym}
	Let $\FF:\PP^r \dashrightarrow \PP^r$ be a dominant rational map with base ideal $I$. 
	If $\dim(R/I) \le 1$, then we have the following upper bound
	$$
	\deg(\FF) \le 1+ \binom{d-1}{r}+ \dim_{\kk}\left({\left[I^{\sat}/I\right]}_d\right) + \sum_{i=2}^{r-1} \dim_{\kk}\left({\left[R/I\right]}_{(r+1-i)d-r-1}\right).
	$$
	\begin{proof}
		Since $Z_0=R$ and $Z_r\cong R(-(r+1)d)$, we have $\HH_{\mm}^1(Z_0)=0$ and 
		$$
		\dim_{\kk}\left({\left[\HH_{\mm}^{r+1}(Z_r)\right]}_{rd}\right) = \dim_{\kk}\left({\left[\frac{1}{\xx}\kk[\xx^{-1}]\right]}_{-d}\right) = \binom{(d-r-1)+r}{r} = \binom{d-1}{r}.
		$$
		From \autoref{lem_local_cohom_Kosz_cycles} we obtain that 
		$$
		\dim_{\kk}\left({\left[\HH_{\mm}^{2}(Z_1)\right]}_d\right) = \dim_{\kk}\left({\left[\HH_{\mm}^0\left(
			R/I\right)\right]}_d\right) = \dim_{\kk}\left({\left[I^{\sat}/I\right]}_d\right)
		$$
		and 
		$$
		\dim_{\kk}\left({\left[\HH_{\mm}^{i+1}(Z_i)\right]}_{id}\right) = \dim_{\kk}\left({\left[H_0^\vee\right]}_{(i-r-1)d+r+1}\right) = \dim_{\kk}\left({\left[R/I\right]}_{(r+1-i)d-r-1}\right)	
		$$
		for any $2\le i \le r-1$.	
		Finally, by substituting these computations in \autoref{degree_in_terms_of_Sym}$(iii)$, we obtain the required upper bound.
	\end{proof}
\end{theorem}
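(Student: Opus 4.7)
The plan is to specialize Theorem~\ref{degree_in_terms_of_Sym}$(iii)$ to the single-graded setting (where $\delta=r$, $\nn=\mm$, $\mathbf{d}=d$) and evaluate each summand
$\dim_{\kk}\!\left({\left[\HH_{\mm}^{i+1}(Z_i)\right]}_{id}\right)$
for $i=0,1,\ldots,r$ using \autoref{lem_local_cohom_Kosz_cycles}. The four ranges $i=0$, $i=1$, $2\le i\le r-1$, and $i=r$ each require a slightly different piece of that lemma, so the proof consists in assembling these four contributions and checking that they match the stated formula.

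For $i=0$, we have $Z_0=R$ and $\HH_{\mm}^1(R)=0$, hence this summand vanishes. For $i=r$, the isomorphism $Z_r\cong R(-(r+1)d)$ identifies
${\left[\HH_{\mm}^{r+1}(Z_r)\right]}_{rd}$ with ${\left[\HH_{\mm}^{r+1}(R)\right]}_{-d}$, which is the degree $-d$ part of $\frac{1}{\xx}\kk[\xx^{-1}]$ and therefore has $\kk$-dimension $\binom{d-1}{r}$. For $i=1$, part $(ii)$ of \autoref{lem_local_cohom_Kosz_cycles} yields $\HH_{\mm}^2(Z_1)\cong \HH_{\mm}^0(R/I)$, whose degree $d$ part is exactly ${\left[I^{\sat}/I\right]}_d$.

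The remaining range $2\le i\le r-1$ is where the bookkeeping is slightly more delicate: \autoref{lem_local_cohom_Kosz_cycles}$(ii)$ identifies $\HH_{\mm}^{i+1}(Z_i)\cong H_{0}^{\vee}(-\xi)$ with $\xi=(r+1)(d-1)$ and $H_0=R/I$. Using that the graded Matlis dual satisfies $\dim_{\kk}[M^{\vee}]_n=\dim_{\kk}[M]_{-n}$, a short computation gives
\[
\dim_{\kk}\!\left({\left[\HH_{\mm}^{i+1}(Z_i)\right]}_{id}\right)=\dim_{\kk}\!\left({\left[R/I\right]}_{\xi-id}\right)=\dim_{\kk}\!\left({\left[R/I\right]}_{(r+1-i)d-r-1}\right).
\]
Summing the four contributions and plugging them into the inequality of \autoref{degree_in_terms_of_Sym}$(iii)$ yields the claimed bound.

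I do not expect any real obstacle: every ingredient is supplied either by \autoref{degree_in_terms_of_Sym}$(iii)$ or by \autoref{lem_local_cohom_Kosz_cycles}, and the only nontrivial step is keeping track of the degree shifts (in particular the identity $\xi-id=(r+1-i)d-r-1$ and the sign convention for the graded Matlis dual). The hypothesis $\dim(R/I)\le 1$ is used exclusively through \autoref{lem_local_cohom_Kosz_cycles}, which requires it in order to guarantee the exactness of the truncated Koszul complexes used there.
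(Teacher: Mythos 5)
Your proposal is correct and follows essentially the same route as the paper: specialize \autoref{degree_in_terms_of_Sym}$(iii)$ and evaluate each term $\dim_{\kk}\left({\left[\HH_{\mm}^{i+1}(Z_i)\right]}_{id}\right)$ via \autoref{lem_local_cohom_Kosz_cycles}, with the same degree-shift bookkeeping $\xi-id=(r+1-i)d-r-1$. The only nitpick is that the hypothesis $\dim(R/I)\le 1$ is not used solely through \autoref{lem_local_cohom_Kosz_cycles}; it is also what guarantees $\dim(\BB)=0$, which is required to invoke \autoref{degree_in_terms_of_Sym}$(iii)$ in the first place.
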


To end this subsection, we show that the above upper bound becomes sharp for dominant plane rational maps when the base ideal is of linear type and is defined by polynomials degree $d\le 3$.

\begin{proposition}
	\label{P^2_case}
	Let $\FF:\PP^2 \dashrightarrow \PP^2$ be a dominant rational map with a dimension $1$ base ideal $I$.
	Then, the following statements hold:
	\begin{enumerate}[(i)]
		\item $\deg(\FF) \le \frac{(d-1)(d-2)}{2} + \dim_{\kk}\Big( {\left[I^{\text{sat}}/I\right]}_d \Big) + 1.$
		\item If $I$ is of linear type and is generated in degree $d\le 3$, then 
		$$
		\deg(\FF) = \frac{(d-1)(d-2)}{2} + \dim_{\kk}\Big( {\left[I^{\text{sat}}/I\right]}_d \Big) + 1.
		$$
	\end{enumerate}
	\begin{proof}
		$(i)$ 
		It follows from \autoref{graded_case_degree_in_terms_of_Sym}.
		
		$(ii)$ From \autoref{degree_in_terms_of_Sym}$(ii)$, the linear type assumption implies $\deg(\FF) = \rank\Big( {\left[\HH_{\mm}^1\big(\Sym(I)\big)\right]}_0  \Big) + 1$.
		The spectral sequence $ {}^{\text{I}}E_1^{-p,q} = \HH_{\mm}^q(\ZZC_p)$ of the proof of \autoref{degree_in_terms_of_Sym}$(iii)$ is given by:
		\begin{center}		
			\begin{tikzpicture}
			\matrix (m) [matrix of math nodes,row sep=1em,column sep=3em,minimum width=2em, text height=1.5ex, text depth=0.25ex]
			{
				\HH_{\mm}^3(\ZZC_{2}) & \HH_{\mm}^3(\ZZC_{1}) & \HH_{\mm}^3(\ZZC_{0}) \\
				0 & \HH_{\mm}^2(\ZZC_{1}) & 0 \\
				0 & 0 & 0 \\
				0 & 0 & 0 \\
			};
			\path[-stealth]		
			(m-1-1) edge (m-1-2)	
			(m-1-2) edge (m-1-3)
			;
			\end{tikzpicture}	
		\end{center}
		Therefore, if we prove that ${\left[\HH_{\mm}^3(\ZZC_{1})\right]}_0=0$ then the convergence of this spectral sequence implies the required equality.
		Since $\ZZC_1=\big[Z_1 \otimes_R \AAA\big](d,-1)$, then it is enough to check that ${\left[\HH_{\mm}^3(Z_1)\right]}_d=0$.
		The short exact sequence
		$$
		0 \rightarrow Z_1 \rightarrow R^3(-d) \rightarrow I \rightarrow 0
		$$
		yields the following exact sequence
		$$
		0 \rightarrow \HH_{\mm}^2(I) \rightarrow \HH_{\mm}^3(Z_1) \rightarrow \HH_{\mm}^3(R^3)(-d),
		$$	
		and so we have ${\left[\HH_{\mm}^3(Z_1)\right]}_d \cong {\left[\HH_{\mm}^2(I)\right]}_d \cong {\left[\HH_{\mm}^1(R/I)\right]}_d$.
		Finally, from 
		\cite[Theorem 1.2$(ii)$]{Hassanzadeh_Simis_Cremona_Sat} we have that $\text{end}(\HH_{\mm}^1(R/I))\le 2d-4$, and so under the assumption $d \le 3$ we have ${\left[\HH_{\mm}^1(R/I)\right]}_d=0$. 
	\end{proof}
\end{proposition}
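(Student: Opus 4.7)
Part (i) should fall out as a direct specialization. Taking $r=2$ in Theorem \ref{graded_case_degree_in_terms_of_Sym}, the binomial $\binom{d-1}{r}$ becomes $\binom{d-1}{2}=(d-1)(d-2)/2$, and the residual sum $\sum_{i=2}^{r-1}\dim_\kk[R/I]_{(r+1-i)d-r-1}$ is indexed by the empty set, so it contributes nothing. Thus the inequality in (i) is immediate from the general bound in \autoref{graded_case_degree_in_terms_of_Sym}. For (ii), the plan is to upgrade this inequality to an equality by running the same spectral-sequence argument from the proof of \autoref{degree_in_terms_of_Sym}$(iii)$ more carefully in the low-dimensional setting $r=\delta=2$, exploiting the linear type assumption and the Hassanzadeh--Simis regularity bound.

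Concretely, the linear-type hypothesis combined with \autoref{degree_in_terms_of_Sym}$(ii)$ gives $\deg(\FF)=\rank\!\big([\HH_\mm^1(\Sym(I))]_0\big)+1$. I would then revisit the spectral sequence ${}^{\mathrm{I}}E_1^{-p,q}=\HH_\mm^q(\ZZC_p)\Rightarrow\HH_\mm^{-p+q}(\Sym(I))$ of \autoref{degree_in_terms_of_Sym}$(iii)$. Since $\ZZC_3=0$, the terms on the total-degree-$1$ diagonal in multidegree $\mathbf{0}$ are just three: $[\HH_\mm^1(\ZZC_0)]_0=[\HH_\mm^1(R)]_0=0$; $[\HH_\mm^2(\ZZC_1)]_0$, whose dimension equals $\dim_\kk[I^\sat/I]_d$ by \autoref{lem_local_cohom_Kosz_cycles}$(ii)$; and $[\HH_\mm^3(\ZZC_2)]_0$, whose dimension equals $\binom{d-1}{2}$, since $Z_2\cong R(-3d)$. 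To conclude equality, I must check that no differential destroys these three pieces. Most vanish automatically: the differential out of $(-1,2)$ lands in $[\HH_\mm^2(R)]_0=0$ (depth reason); higher differentials out of $(-2,3)$ land either in the zeroed column $(0,2)$ or in the first quadrant; and there are no incoming differentials from columns to the left because $\ZZC_3=0$. The only critical check is that the $d_1$ differential $[\HH_\mm^3(\ZZC_2)]_0\to[\HH_\mm^3(\ZZC_1)]_0$ is trivial, for which it suffices to prove $[\HH_\mm^3(\ZZC_1)]_0=0$.

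Translating back via $\ZZC_1=(Z_1\otimes_R\AAA)(d,-1)$, the assertion becomes $[\HH_\mm^3(Z_1)]_d=0$. This is handled by the standard two-step chasing: the Koszul short exact sequence $0\to Z_1\to R^3(-d)\to I\to 0$ and the vanishing of $[\HH_\mm^3(R)]_0$ give $[\HH_\mm^3(Z_1)]_d\cong[\HH_\mm^2(I)]_d$, and then $0\to I\to R\to R/I\to 0$ with $\HH_\mm^{\le 2}(R)=0$ produces $[\HH_\mm^2(I)]_d\cong[\HH_\mm^1(R/I)]_d$. The main obstacle, and the reason for the degree restriction $d\le 3$, is precisely this last vanishing: it is supplied by the Hassanzadeh--Simis bound $\mathrm{end}(\HH_\mm^1(R/I))\le 2d-4$, which forces $[\HH_\mm^1(R/I)]_d=0$ as soon as $d>2d-4$, i.e.\ $d<4$. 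This closes the argument and yields the stated equality.
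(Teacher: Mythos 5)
Your proposal is correct and follows essentially the same route as the paper: part (i) by specializing \autoref{graded_case_degree_in_terms_of_Sym} to $r=2$, and part (ii) via \autoref{degree_in_terms_of_Sym}$(ii)$, the spectral sequence ${}^{\text{I}}E_1^{-p,q}=\HH_{\mm}^q(\ZZC_p)$, the reduction of the key vanishing to ${\left[\HH_{\mm}^1(R/I)\right]}_d=0$ through the two Koszul short exact sequences, and the Hassanzadeh--Simis bound $\text{end}(\HH_{\mm}^1(R/I))\le 2d-4$. Your explicit verification that all other differentials in total degree one vanish is only a more detailed spelling-out of what the paper compresses into ``the convergence of this spectral sequence implies the required equality.''
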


\section{Multi-graded Jacobian dual criterion of birationality}\label{sec:jacdualcriterion}

A rational map is birational if and only if its degree is equal to one, so the results we have previously developed provide birationality criteria. Nevertheless, because of its theoretical and practical importance, some more specific techniques have been developed to decide birationality, mostly for single-graded rational maps. In particular, it has been shown that birationality is controlled by a single numerical invariant that corresponds to the rank of a certain matrix called the \textit{Jacobian dual matrix} (see \cite{SIM_ULRICH_VASC_JACOBIAN}, \cite{Simis_cremona}, \cite[\S 2.3 and \S 2.4]{AB_INITIO} and \cite[Section 2.2]{EFFECTIVE_BIGRAD}). 
In this section, we extend this theory to the multi-graded setting. In \autoref{subsection_jacobian_criterion}, the multi-graded version of the Jacobian dual matrix is introduced and a general birationality criterion is proved (\autoref{jacob_main_crit}). 
As an illustration, a very simple birationality criterion is deduced for certain monomial multi-graded maps (\autoref{cor:monomialmaps}). Then, in \autoref{subsection_conseq_jacobian}, we investigate how birationality can be detected by using only the syzygies of the base ideal $I$ of a rational map, instead of the whole collection of equations of the Rees algebra of $I$ (\autoref{prop_linear_syz_imply_birat}), which are required for the Jacobian dual matrix. 
Under the assumption that $I$ is of linear type, we also obtain a syzygy-based birationality criterion (\autoref{thm:syzygy-jacobiandual}). 

\medskip

In this section we use the same notations and conventions of \autoref{subsection_prelim}.
If the dominant rational map $\FF : X = X_1 \times X_2 \times \cdots \times X_m \dashrightarrow Y$ has an inverse, then it is denoted by  
$$
\mathcal{G}: Y  \dashrightarrow (X_1, X_2, \ldots, X_m).
$$
Each rational map $Y \dashrightarrow X_i \subset \PP^{r_i}$ is defined by $r_i+1$ homogeneous polynomials $\mathbf{g}_i=\{g_{i,0}, g_{i,1},\ldots, g_{i,r_i}\} \subset S$ of the same degree. 
For each $i=1,\ldots,m$, we set $J_i$ to be the homogeneous ideal generated by $\mathbf{g}_i$.

\subsection{Jacobian dual matrices and the main criterion}
\label{subsection_jacobian_criterion}

We begin this section with the following preliminary lemma which is based on \cite[Lemma 1]{EFFECTIVE_BIGRAD}, \cite[Proposition 2.1]{Simis_cremona} and \cite[Theorem 2.18]{AB_INITIO}.

\begin{lemma}
	\label{lem_birat_implies_isom_Rees}
	Assume that $\mathcal{F}$ is a birational map with inverse $\mathcal{G}$.
	Let $I = (\mathbf{f})$ and $J_1 = (\mathbf{g}_1), \ldots, J_m = (\mathbf{g}_m)$. 
	Then, the identity map of $\kk[\mathbf{x},\mathbf{y}]$ induces a $\kk$-algebra isomorphism between the Rees algebra $\Rees_R(I)$ and the multi-graded Rees algebra $\Rees_S(J_1\oplus J_2 \oplus \cdots \oplus J_m)$.
	\begin{proof}
		First we note that both algebras can be identified as a quotient of $R \otimes_{\kk} S \cong \frac{\kk[\mathbf{x}, \mathbf{y}]}{\mathfrak{a}_1, \ldots, \mathfrak{a}_m, \mathfrak{b}}$.
		The algebra $\Rees_R(I)$ has a presentation given by 
		\begin{eqnarray*}
					\frac{\kk[\mathbf{x}]}{\mathfrak{a}_1,\ldots,\mathfrak{a}_m}[\mathbf{y}]   & \twoheadrightarrow & \Rees_R(I) = R[\mathbf{f}t]\\
					y_i  & \mapsto & f_it.
		\end{eqnarray*}
		Let $\overline{\REQ} = (\REQ, \mathfrak{a}_1,\ldots,\mathfrak{a}_m)/(\mathfrak{a}_1,\ldots,\mathfrak{a}_m)$ denote the kernel of this map. 
		Since  $Y$ can be identified with $\text{Proj}(\kk[\mathbf{f}])$ and the two algebras $\kk[\mathbf{f}]$ and $\kk[\mathbf{f}t]$ are isomorphic, then we get $\mathfrak{b} = \Ker(\kk[\mathbf{y}] \twoheadrightarrow \kk[\mathbf{f}]) = \Ker(\kk[\mathbf{y}] \twoheadrightarrow \kk[\mathbf{f}t]) \subset \REQ$, as required.
		
		Similarly, the algebra $\Rees_S(J_1\oplus \cdots \oplus J_m)$ has a presentation 
		\begin{eqnarray*}
		\frac{\kk[\mathbf{y}]}{\mathfrak{b}}[\mathbf{x}]&  \twoheadrightarrow & \Rees_S(J_1\oplus \cdots \oplus J_m) = S[\mathbf{g}_1t_1, \ldots, \mathbf{g}_mt_m]\\
		x_{i,j} & \mapsto & g_{i,j}t_i.
		\end{eqnarray*}
		We denote by $\overline{\mathcal{J}} = (\mathcal{J}, \mathfrak{b})/\mathfrak{b}$ the kernel of this map. 
		For each $i=1,\ldots,m$, we can identify $X_i$ with $\text{Proj}(\kk[\mathbf{g}_i])$ and as before we get $\mathfrak{a}_i = \Ker(\kk[\mathbf{x}_i] \twoheadrightarrow \kk[\mathbf{g}_i]) = \Ker(\kk[\mathbf{x}_i] \twoheadrightarrow \kk[\mathbf{g}_it_i]) \subset \mathcal{J}$.
		
		Since now we can regard $\Rees_R(I)$ and $\Rees_S(J_1\oplus\cdots\oplus J_m)$ as quotients of $\frac{\kk[\mathbf{x}, \mathbf{y}]}{\mathfrak{a}_1, \ldots, \mathfrak{a}_m, \mathfrak{b}}$, then it is enough to prove that $\mathcal{J} \subset (\REQ, \mathfrak{a}_1,\ldots,\mathfrak{a}_m)$ and that $\REQ \subset (\mathcal{J}, \mathfrak{b})$. 
		
		Let $F(\mathbf{y}, \mathbf{x}_1,\ldots,\mathbf{x}_m) \in \mathcal{J}$ be multi-homogeneous, then we have $$
		F(\mathbf{y},\mathbf{g}_1t_1, \ldots, \mathbf{g}_mt_m) = 0 \in S[\mathbf{g}_1t_1, \ldots, \mathbf{g}_mt_m]
		$$
		and using the multi-homogeneity of $F$ we get $F(\mathbf{y}, \mathbf{g}_1,\ldots,\mathbf{g}_m)=0 \in S$.
		From the canonical injection $S \cong \kk[\mathbf{f}] \hookrightarrow R$ we make the substitution $y_i \mapsto f_i$, and we obtain
		$$
		F(\mathbf{f}, \mathbf{g}_1(\mathbf{f}), \ldots, \mathbf{g}_m(\mathbf{f})) = 0 \in R.		 	
		$$
		By the assumption of $\mathcal{F}$ being birational, there exist nonzero multi-homogeneous forms $D_1, \ldots,D_m$ in $R$, possibly of different multi-degrees, such that 
		$$
		\mathbf{g}_1(\mathbf{f}) = D_1 \mathbf{x}_1,\; \mathbf{g}_2(\mathbf{f}) = D_2 \mathbf{x}_2,\; \ldots, \;\mathbf{g}_m(\mathbf{f}) = D_m \mathbf{x}_m.
		$$
		Again, from the multi-homogeneity of $F$ we get 
		$$
		F(\mathbf{f}, \mathbf{g}_1(\mathbf{f}), \ldots, \mathbf{g}_m(\mathbf{f})) = D_1^{\alpha_1}\cdots D_m^{\alpha_m} F(\mathbf{f}, \mathbf{x}_1, \ldots, \mathbf{x}_m) = 0 \in R,
		$$
		and so $F(\mathbf{f}, \mathbf{x}_1, \ldots, \mathbf{x}_m) = 0$ because $R$ is an integral domain. 
		From the identification $\kk[\mathbf{f}]\cong\kk[\mathbf{f}t]$ we get 
		$$
		F(\mathbf{f}t,\mathbf{x}_1,\ldots,\mathbf{x}_m) = 0 \in R[\mathbf{f}t],
		$$
		then by definition we get $F \in (\REQ, \mathfrak{a}_1,\ldots,\mathfrak{a}_m)$. Therefore, $\mathcal{J} \subset (\REQ, \mathfrak{a}_1,\ldots,\mathfrak{a}_m)$.
		
		We can prove the other containment with similar arguments. 
	\end{proof}		
\end{lemma}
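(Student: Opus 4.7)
My plan is to realize both Rees algebras as quotients of a common ambient ring, so that checking the isomorphism reduces to comparing two defining ideals. Specifically, I would set up the presentation $R[\mathbf{y}] = \kk[\mathbf{x},\mathbf{y}]/(\mathfrak{a}_1,\ldots,\mathfrak{a}_m) \twoheadrightarrow \Rees_R(I)$ via $y_i\mapsto f_it$, with kernel $\overline{\REQ}$, and observe that since $t$ is transcendental over $\kk[\mathbf{f}]$ we have $\kk[\mathbf{f}]\cong\kk[\mathbf{f}t]$; hence $\mathfrak{b}=\Ker(\kk[\mathbf{y}]\to\kk[\mathbf{f}])$ is contained in $\overline{\REQ}$, so $\Rees_R(I)$ is really a quotient of the symmetric ring $\kk[\mathbf{x},\mathbf{y}]/(\mathfrak{a}_1,\ldots,\mathfrak{a}_m,\mathfrak{b})$. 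A parallel argument on the $\mathcal{G}$-side, using the presentation $S[\mathbf{x}]\twoheadrightarrow \Rees_S(J_1\oplus\cdots\oplus J_m)$ sending $x_{i,j}\mapsto g_{i,j}t_i$ with kernel $\overline{\mathcal{J}}$, shows that each $\mathfrak{a}_i\subset\overline{\mathcal{J}}$, because $\kk[\mathbf{g}_i]\cong\kk[\mathbf{g}_it_i]$. Thus both Rees algebras are quotients of the same ring and it suffices to prove the two inclusions $\overline{\mathcal{J}}\subset\overline{\REQ}$ and $\overline{\REQ}\subset\overline{\mathcal{J}}$.

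For $\overline{\mathcal{J}}\subset\overline{\REQ}$, I would pick a multi-homogeneous $F(\mathbf{y},\mathbf{x}_1,\ldots,\mathbf{x}_m)\in\overline{\mathcal{J}}$, plug in $x_{i,j}\mapsto g_{i,j}t_i$, and use multi-homogeneity in each $\mathbf{x}_i$-block to factor out the $t_i$, arriving at the identity $F(\mathbf{y},\mathbf{g}_1,\ldots,\mathbf{g}_m)=0$ in $S$. Substituting $y_i\mapsto f_i$ through $S\cong\kk[\mathbf{f}]\hookrightarrow R$ yields $F(\mathbf{f},\mathbf{g}_1(\mathbf{f}),\ldots,\mathbf{g}_m(\mathbf{f}))=0$ in $R$. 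The decisive use of birationality enters here: since $\mathcal{G}\circ\mathcal{F}=\mathrm{id}$, there exist nonzero multi-homogeneous forms $D_i\in R$ with $\mathbf{g}_i(\mathbf{f})=D_i\mathbf{x}_i$. Multi-homogeneity of $F$ in each block lets me pull out a common factor $D_1^{\alpha_1}\cdots D_m^{\alpha_m}$, and since $R$ is a domain I conclude $F(\mathbf{f},\mathbf{x})=0$; reintroducing $t$ using $\mathbf{y}$-homogeneity places $F\in\overline{\REQ}$.

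The reverse inclusion $\overline{\REQ}\subset\overline{\mathcal{J}}$ is proved by a symmetric argument using $\mathcal{F}\circ\mathcal{G}=\mathrm{id}$, which produces nonzero forms $E_j\in S$ with $f_j(\mathbf{g}_1,\ldots,\mathbf{g}_m)=E_jy_j$, after which the same substitution-and-cancellation pattern is carried out in $S$. The step I expect to be most delicate is the multi-graded bookkeeping: the forms $D_i$ (and $E_j$) live in a priori different multi-degrees, one per factor $X_i$, so I must verify that the multi-homogeneity of $F$ in each block $\mathbf{x}_i$ (resp.\ $\mathbf{y}$) is strong enough to factor out each $D_i^{\alpha_i}$ (resp.\ $E_j^{\beta_j}$) cleanly and that the resulting vanishing occurs in the correct integral domain. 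Once this multi-graded substitution is pushed through carefully, the two inclusions combine to give the required $\kk$-algebra isomorphism induced by the identity on $\kk[\mathbf{x},\mathbf{y}]$.
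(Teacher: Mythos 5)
Your proposal follows essentially the same route as the paper's own proof: both Rees algebras are realized as quotients of $\kk[\mathbf{x},\mathbf{y}]/(\mathfrak{a}_1,\ldots,\mathfrak{a}_m,\mathfrak{b})$, the problem is reduced to the two inclusions of defining ideals, and the inclusion $\overline{\mathcal{J}}\subset\overline{\REQ}$ is obtained exactly as in the paper by substituting, invoking the inversion factors $D_i$ with $\mathbf{g}_i(\mathbf{f})=D_i\mathbf{x}_i$, and cancelling in the domain $R$, with the reverse inclusion done symmetrically (the paper likewise leaves it to ``similar arguments''). One minor precision for the reverse direction: the relation coming from $\mathcal{F}\circ\mathcal{G}=\mathrm{id}$ is $\mathbf{f}(\mathbf{g})=E\,\mathbf{y}$ with a single inversion factor $E\in S$ for the whole block $\mathbf{y}$, not separate factors $E_j$ per coordinate, and it is this single factor that lets you pull out the common power $E^{\beta}$ and cancel it in the domain $S$.
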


Let $(\mathfrak{a}_1,\ldots,\mathfrak{a}_m,\REQ) \subset \kk[\mathbf{x},\mathbf{y}]$ be the defining equations of the Rees algebra $\Rees_R(I)$. We shall adopt the following notation.

\begin{notation}
	For each $1 \le i \le m$, let $\{h_{i,1},\ldots,h_{i,k_i}\}$ be a minimal set of generators of the multi-graded part of $(\mathfrak{a}_1,\ldots,\mathfrak{a}_m, \REQ)$ of multi-degree 
	$$
	(0,\ldots, \underbrace{1}_{i\text{-th}}, \ldots,0, *),
	$$
	where $*$ denotes arbitrary degree in $\mathbf{y}$.
	We denote by $\psi_i$ the Jacobian matrix of the collection of polynomials $\{h_{i,1},\ldots,h_{i,k_i}\}$ with respect to $\mathbf{x}_i$, that is
	$$
	\psi_i = \left( 
	\begin{array}{cccc}
	\frac{h_{i,1}}{\partial x_{i, 0}} & \frac{h_{i,1}}{\partial x_{i, 1}} & \cdots & \frac{h_{i,1}}{\partial x_{i, r_i}}	\\
	\frac{h_{i,2}}{\partial x_{i, 0}} & \frac{h_{i,2}}{\partial x_{i, 1}} & \cdots & \frac{h_{i,2}}{\partial x_{i, r_i}}	\\
	\vdots & \vdots & & \vdots \\		
	\frac{h_{i,k_i}}{\partial x_{i, 0}} & \frac{h_{i,k_i}}{\partial x_{i, 1}} & \cdots & \frac{h_{i,k_i}}{\partial x_{i, r_i}}
	\end{array} 
	\right).
	$$
	Following \cite{Simis_cremona,AB_INITIO,EFFECTIVE_BIGRAD}, the matrix $\psi_i$ will be called the $\mathbf{x}_i$-partial Jacobian dual matrix. We note that its entries are polynomials in $\kk[\yy]$.
	Finally, the matrix obtained by concatenating all the $\psi_i$'s in the main diagonal
	$$
	\psi = \left( 
	\begin{array}{cccc}
		\psi_1  & 0 & \cdots & 0	\\
		0 & \psi_2  & \cdots & 0	\\
		\vdots & \vdots & & \vdots \\		
		0 & 0 & \cdots & \psi_m
	\end{array} 
	\right)
	$$
	will be called the full Jacobian dual matrix.
\end{notation}

The next proposition is based on \cite[Proposition 2.15]{AB_INITIO}.
It shows that the ranks of the Jacobian dual matrices are sensitive to the dimensions of the source and the target.

\begin{proposition}
	\label{ineq_dim_jdrank}
	Let $\mathcal{F}: X_1 \times \cdots \times X_m \dashrightarrow Y$ be a dominant rational map.
	Then, we have the following inequalities:
	\begin{align}
	\label{general_ineq_sum}
	&\dim(X_1) + \cdots + \dim(X_m) - \dim(Y) \le \sum_{i=1}^{m} r_i - \sum_{i=1}^{m} \rank_S(\psi_i \otimes_{\kk[\mathbf{y}]} S).\\	
	\label{ineq_rank_phi_i}	
	&\rank_S(\psi_i \otimes_{\kk[\mathbf{y}]} S) \le r_i \text{ for each } i=1,\ldots,m.
	\end{align}
	\begin{proof}
		We begin with the first inequality. 	
		For each $1 \le i \le m$. 
		Let $E_i$ be the $S$-module $E_i = \Coker_S(\psi_i^t \otimes_{\kk[\mathbf{y}]} S)$ with presentation 
		$$
		S^{k_i} \xrightarrow{\psi_i^t \otimes_{\kk[\mathbf{y}]} S} S^{r_i+1} \rightarrow E_i \rightarrow 0. 
		$$  
		The direct sum $E = E_1 \oplus E_2 \oplus \cdots \oplus E_m$ is an $S$-module with presentation 
		$$
		S^{k_1}\oplus S^{k_2}\oplus\cdots \oplus S^{k_m} \xrightarrow{\psi^t \otimes_{\kk[\mathbf{y}]} S} S^{r_1+1}\oplus S^{r_2+1}\oplus\cdots \oplus S^{r_m+1} \rightarrow E \rightarrow 0.
		$$
		By the definition of the Jacobian dual matrices we have that $I_1(\mathbf{x} \cdot \psi^t) \subset \REQ$, and we saw in the proof of \autoref{lem_birat_implies_isom_Rees} that $\mathfrak{b} \subset \REQ$. Hence, we get a canonical surjective homomorphism $\Sym_S(E) \twoheadrightarrow \Rees_R(I)$ of $S$-algebras given by 
		\begin{align*}
		\alpha: \Sym_S(E) &\cong S[\mathbf{x}]/I_1(\mathbf{x} \cdot \big(\psi^t \otimes_{\kk[\mathbf{y}]} S)\big) \cong \kk[\mathbf{y}][\mathbf{x}]/\big(\mathfrak{b}, I_1(\mathbf{x} \cdot \psi^t)\big) \twoheadrightarrow R[\mathbf{y}]/\big(\mathfrak{b},I_1(\mathbf{x} \cdot \psi^t)\big) \\
		&\twoheadrightarrow \Rees_R(I).
		\end{align*}
		
		Following \cite{Rees_alg_modules}, we have that $\Rees_S(E) = \Sym_S(E) / \mathcal{T}$ where $\mathcal{T}$ represents the $S$-torsion submodule of $\Sym_S(E)$.
		Let $G \in \mathcal{T}$, there exists some $s \in S \setminus 0$ such that $s\cdot G = 0 \in \Sym_S(E)$. By using the isomorphisms $S \cong \kk[\mathbf{f}]  \cong \kk[\mathbf{f}t] \subset \Rees_R(I),$  we can see that 
		$$
		0=\alpha(s\cdot G)=\alpha(s)\alpha(G)=s(\mathbf{f}t)\alpha(G) \in \Rees_R(I)
		$$
		where $s(\mathbf{f}t) \neq 0$. 
		Since $\Rees_R(I)$ is an integral domain then it follows that $\alpha(G)=0$, and so we have a canonical surjective homomorphism 
		\begin{equation}
		\label{map_Rees_E_Rees_I}
		\Rees_S(E) \twoheadrightarrow \Rees_R(I)
		\end{equation}			
		of $S$-algebras. 
		
		Finally, from \cite[Proposition 2.2]{Rees_alg_modules} we get 
		\begin{align*}
		\dim\big(\Rees_R(I)\big) &\le \dim\big(\Rees_S(E)\big) \\
		\dim(R) + 1 &\le \dim(S) + m + \sum_{i=1}^{m} r_i - \sum_{i=1}^{m} \rank_S(\psi_i \otimes_{\kk[\mathbf{y}]} S), 					
		\end{align*}
		and using the equality $\dim(R) = \dim(A_1) + \cdots + \dim(A_m),$ we substitute 
		\begin{align*}
		\dim(X_1)+\cdots +\dim(X_m) + m + 1 &\le \dim(Y) + 1 + m + \sum_{i=1}^{m} r_i - \sum_{i=1}^{m} \rank_S(\psi_i \otimes_{\kk[\mathbf{y}]} S) \\
		\dim(X_1)+\cdots +\dim(X_m) - \dim(Y) &\le \sum_{i=1}^{m} r_i - \sum_{i=1}^{m} \rank_S(\psi_i \otimes_{\kk[\mathbf{y}]} S).					
		\end{align*}
		
		Now, we turn to the proof of the second claimed inequality. We follow one of the steps in the proof of \cite[Proposition 3]{EFFECTIVE_BIGRAD}. Fix $i=1,\ldots,m$.
		We have that $A_1 \otimes_{\kk} \cdots \otimes_{\kk} A_{i-1} \otimes_{\kk} A_{i+1} \otimes_{\kk} \cdots \otimes_{\kk} A_m$ is an integral domain, then let us denote by $\mathbb{L}$ its quotient field. 
		Let $X_i^{\prime}=\text{Proj}\big(\mathbb{L}[\mathbf{x}_i] / (\mathfrak{a}_i)\big)$, we define a rational map 
		$$
		\mathcal{F}^{\prime}: X_i^{\prime} \dashrightarrow Y'=\text{Proj}(S^{\prime}) \subset \PP_{\mathbb{L}}^s
		$$
		given by the classes of $f_0,\ldots,f_s$  inside $\mathbb{L}[\mathbf{x}_i] / (\mathfrak{a}_i)$, and  we denote $S^{\prime}:= \mathbb{L}[\mathbf{f}]$.
		Using the field inclusion $\kk \hookrightarrow \mathbb{L}$ we can check that any polynomial in the defining equations of the Rees algebra $\Rees_R(I)$ is also contained in the defining equations of the Rees algebra $\Rees_{\mathbb{L}[\mathbf{x}_i] / (\mathfrak{a}_i)}\big(I\big)$. 
		In particular, we have that the row space of $\psi_i \otimes_{\kk[\mathbf{y}]} S$ is contained in the row space of $\psi^{\prime} \otimes_{\mathbb{L}[\mathbf{y}]} S^{\prime}$, where $\psi^{\prime}$ denotes the Jacobian dual matrix of $\mathcal{F}^{\prime}$.
		Hence, $\rank_S(\psi_i \otimes_{\kk[\mathbf{y}]} S) \le \rank_{S^{\prime}}(\psi^{\prime} \otimes_{\mathbb{L}[\mathbf{y}]} S^{\prime}) \le r_i$, and the last inequality follows from \cite[Corollary 2.16]{AB_INITIO} or \autoref{general_ineq_sum}.
	\end{proof}

\end{proposition}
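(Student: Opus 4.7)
My plan is to reduce the first inequality to a dimension count for the Rees algebra of a suitable $S$-module $E$ whose presentation is governed by the Jacobian dual matrix, and then specialize to a generic fiber to get the second inequality. The guiding principle is that the partial Jacobian dual matrix $\psi_i$ encodes exactly the degree-one (in $\mathbf{x}_i$) relations among the $y_j$'s in $\Rees_R(I)$, so the cokernel of $\psi_i^t \otimes S$ should approximate the summand of $\Rees_R(I)$ generated by $\mathbf{x}_i$.

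First, for each $i$ I would set $E_i := \Coker_S(\psi_i^t \otimes_{\kk[\mathbf{y}]} S)$, which has presentation $S^{k_i} \to S^{r_i+1} \to E_i \to 0$, and then form $E = E_1 \oplus \cdots \oplus E_m$. By construction the Jacobian dual identity gives $I_1(\mathbf{x}\cdot \psi^t) \subset \REQ$, and since $\mathfrak{b} \subset \REQ$ (exactly as in \autoref{lem_birat_implies_isom_Rees}), sending $x_{i,j} \mapsto f_{i,j}t$ induces a well-defined surjection
\[
\alpha: \Sym_S(E) \cong S[\mathbf{x}]/I_1(\mathbf{x}\cdot(\psi^t \otimes S)) \twoheadrightarrow \Rees_R(I).
\]
Because $\Rees_R(I)$ is an integral domain and the image of any nonzero element of $S$ is a nonzero element of $\Rees_R(I)$, the kernel of $\alpha$ contains the $S$-torsion $\mathcal{T} \subset \Sym_S(E)$, so $\alpha$ factors through the Rees algebra $\Rees_S(E) = \Sym_S(E)/\mathcal{T}$ (in the sense of \cite{Rees_alg_modules}).

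Next I would invoke the dimension formula $\dim \Rees_S(E) = \dim S + \rank_S(E)$ from \cite[Proposition 2.2]{Rees_alg_modules}. Since $\rank_S(E_i) = r_i+1 - \rank_S(\psi_i \otimes S)$ and $\dim \Rees_R(I) = \dim R + 1$, the surjection gives
\[
\dim R + 1 \le \dim S + m + \sum_{i=1}^m r_i - \sum_{i=1}^m \rank_S(\psi_i \otimes_{\kk[\mathbf{y}]} S).
\]
Using $\dim R = \sum_i \dim X_i + m$ and $\dim S = \dim Y + 1$, the $m$'s and $1$'s cancel and \autoref{general_ineq_sum} drops out.

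For the second inequality \autoref{ineq_rank_phi_i}, I would fix $i$ and pass to the generic fiber of the projection away from the $i$-th factor: let $\mathbb{L}$ be the quotient field of the integral domain $A_1 \otimes \cdots \otimes \widehat{A_i} \otimes \cdots \otimes A_m$, and consider the induced dominant rational map $\mathcal{F}': X_i' = \Proj(\mathbb{L}[\mathbf{x}_i]/\mathfrak{a}_i) \dashrightarrow Y' \subset \PP^s_{\mathbb{L}}$. Every equation of $\Rees_R(I)$ remains an equation of $\Rees_{\mathbb{L}[\mathbf{x}_i]/\mathfrak{a}_i}(I\otimes \mathbb{L})$, so the row space of $\psi_i \otimes S$ sits inside the row space of the single Jacobian dual matrix $\psi'$ of $\mathcal{F}'$; hence $\rank_S(\psi_i \otimes S) \le \rank_{S'}(\psi' \otimes S') \le r_i$, where the last bound is the classical single-graded estimate (also deducible from \autoref{general_ineq_sum} applied to $\mathcal{F}'$).

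The main technical point is ensuring that $\alpha$ descends to the Rees algebra of $E$, which requires both the Jacobian dual relation to hold at the level of $\Rees_R(I)$ and that we correctly identify the $S$-torsion; the rest of the argument is bookkeeping with the dimension formula.
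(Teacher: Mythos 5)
Your proposal follows essentially the same route as the paper's proof: the same module $E=\bigoplus_i \Coker_S(\psi_i^t\otimes_{\kk[\mathbf{y}]}S)$, the same surjection $\Sym_S(E)\twoheadrightarrow \Rees_R(I)$ shown to kill the $S$-torsion via the integral-domain argument so that it factors through $\Rees_S(E)$, the same dimension count from \cite[Proposition 2.2]{Rees_alg_modules}, and the same generic-fiber reduction over the field $\mathbb{L}$ for the bound $\rank_S(\psi_i\otimes_{\kk[\mathbf{y}]}S)\le r_i$. The only slip is the phrase ``sending $x_{i,j}\mapsto f_{i,j}t$'': the map underlying $\alpha$ sends $y_j\mapsto f_jt$ and keeps the $x$-variables (as in \autoref{presentation_Rees}), but this does not affect the argument.
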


The following birationality criterion is the main result of this section; it is the multi-graded version of \cite[Theorem 2.18]{AB_INITIO} and \cite[Theorem 2]{EFFECTIVE_BIGRAD}.

\begin{theorem}
	\label{jacob_main_crit}
	Let $\mathcal{F}:X_1\times \cdots \times X_m \dashrightarrow Y$ be a dominant rational map.
	Then, the following three conditions are equivalent:
	\begin{enumerate}[(i)]
		\item $\FF$ is birational.
		\item $\rank_S(\psi_i \otimes_{\kk[\mathbf{y}]} S) = r_i	$\,
		for each $i=1,\ldots,m$.
		\item $\rank_S(\psi \otimes_{\kk[\mathbf{y}]} S) = r_1+r_2+\cdots+r_m$. 
	\end{enumerate}

	In addition, if $\mathcal{F}$ is birational then its inverse is of the form $\mathcal{G} : Y \dashrightarrow X_1\times \cdots \times X_m$, where each map $Y \dashrightarrow X_i$ is given by the signed ordered maximal minors of an $r_i \times (r_i+1)$ submatrix of $\psi_i$ of rank $r_i$.
	\begin{proof}
		$(i) \Rightarrow (ii)$.  Let us suppose that $\mathcal{F}$ is birational. 
		From \autoref{lem_birat_implies_isom_Rees} we get an isomorphism $\Rees_R(I) \cong \Rees_S(J_1\oplus\cdots\oplus J_m)$ induced by the identity of $\kk[\mathbf{x}, \mathbf{y}]$.
		So we obtain the equality $(\REQ,\mathfrak{a}_1,\ldots,\mathfrak{a}_m)=(\mathcal{J},\mathfrak{b})$ that in particular gives us
		\begin{equation}
		\label{equality_graded_parts}
		{\left[(\REQ,\mathfrak{a}_1,\ldots,\mathfrak{a}_m)\right]}_{(\underbrace{0,\ldots,1,\ldots,0}_{\mathbf{x}},\underbrace{*}_{\mathbf{y}})} =			{\left[(\mathcal{J},\mathfrak{b})\right]}_{(\underbrace{*}_{\mathbf{y}},\underbrace{0,\ldots,1,\ldots,0}_{\mathbf{x}})}
		\end{equation}
		for each $i=1,\ldots,m$.
		By reducing modulo $\mathfrak{b}$, the right hand side of \autoref{equality_graded_parts} yields a presentation 
		$$
		0 \rightarrow {\left[(\mathcal{J},\mathfrak{b})/\mathfrak{b}\right]}_{(*,0,\ldots,1,\ldots,0)} \rightarrow S[\mathbf{x}_i] \rightarrow \Sym_S(\mathbf{g}_i) \rightarrow 0
		$$
		of the symmetric algebra $\Sym_S\left(\mathbf{g_i}\right)$ of $(\mathbf{g_i})$.
		On the other hand, from the definition of Jacobian dual matrices we have
		$$
		{\left[(\REQ,\mathfrak{a}_1,\ldots,\mathfrak{a}_m)/\mathfrak{b}\right]}_{(0,\ldots,1,\ldots,0,*)} \,=\, I_1\big(\mathbf{x}_i \cdot (\psi_i^t \otimes_{\kk[\mathbf{y}]} S)\big).
		$$
		Let $\Syz_S(\mathbf{g}_i)$ be the matrix of syzygies of $(\mathbf{g}_i)$. 
		By the two previous reductions of \autoref{equality_graded_parts}, we obtain 
		\begin{equation}
		\label{containt_Sym_E_J}
		I_1\big(\mathbf{x}_i \cdot (\psi_i^t \otimes_{\kk[\mathbf{y}]} S)\big) = I_1(\mathbf{x}_i \cdot \Syz_S(\mathbf{g}_i)).
		\end{equation}
		Since both matrices $\psi_i^t \otimes_{\kk[\mathbf{y}]} S$ and $\Syz_S(\mathbf{g}_i)$ have entries in $S,$ the column space of $\Syz_S(\mathbf{g}_i)$ is equal to the one of $\psi_i^t \otimes_{\kk[\mathbf{y}]} S$. 
		Finally, the fact that $\rank_S(\Syz_S(\mathbf{g}_i))=r_i$ implies that $\rank_S(\psi_i^t \otimes_{\kk[\mathbf{y}]} S) = r_i$. 
		
		$(ii) \Rightarrow (i)$. We assume that $\rank_S(\psi_i \otimes_{\kk[\mathbf{y}]} S) = r_i$ for each $i=1,\ldots,m$. 
		Fix $i = 1,\ldots,m$. 
		Let $M_i$ be a $r_i\times (r_i+1)$ submatrix of $\psi_i$ such that $\rank_S\left(M_i \otimes_{\kk[\yy]}S\right)=r_i$.
		Denote by $\Delta_0(\mathbf{y}), \Delta_1(\mathbf{y}),\cdots,\Delta_{r_i}(\mathbf{y})$ the ordered signed minors of $M_i^t.$ The Hilbert-Koszul lemma (\cite[Propositon 2.1]{AB_INITIO}) implies that the vector $e_a\Delta_b(\mathbf{y}) - e_b\Delta_a(\mathbf{y})$ belongs to the column space of $M_i^t$, and so also to the one of $\psi_i^t$.
		Since by definition $I_1(\mathbf{x}_i \cdot \psi_i^t)=			{\left[\left(\REQ,\mathfrak{a}_1,\ldots,\mathfrak{a}_m\right)\right]}_{(0,\ldots,1,\ldots,0,*)}$, we get $x_{i,a}\Delta_b(\mathbf{y}) - x_{i,b}\Delta_a(\mathbf{y}) \in {\left[\left(\REQ,\mathfrak{a}_1,\ldots,\mathfrak{a}_m\right)\right]}_{(0,\ldots,1,\ldots,0,*)}$.
		
		Making a substitution via the canonical homomorphism $\kk[\xx,\yy] \rightarrow \Rees_R(I)$, we automatically get 
		$$
		x_{i,a}\Delta_b(\mathbf{f}) - x_{i,b}\Delta_a(\mathbf{f})=0 \in R, \quad \text{for every pair }  a,b.
		$$
		From the inclusion $S \cong \kk[\mathbf{f}] \cong \kk[\mathbf{f}t] \subset \Rees_R(I)$ and the rank assumption, we have that the tuple 
		$$
		\left(\Delta_0(\mathbf{f}),\ldots,\Delta_{r_i}(\mathbf{f})\right)
		$$ 
		does not vanish on $R$.
		Let $\mathcal{G} : Y \dashrightarrow X_1 \times \cdots \times X_m$ where each map $Y \rightarrow X_i$ is given by the tuple $\left(\Delta_0(\mathbf{y}),\ldots,\Delta_{r_i}(\mathbf{y})\right)\otimes_{\kk[\mathbf{y}]} S.$ We have proven that $\mathcal{G}$ is the inverse of $\mathcal{F}$.
		
		$(ii) \Leftrightarrow (iii)$.
		This part follows from the inequalities of \autoref{ineq_rank_phi_i} and the fact that $\rank_S(\psi \otimes_{\kk[\mathbf{y}]} S) = \sum_{i=1}^{m} \rank_S(\psi_i \otimes_{\kk[\mathbf{y}]} S)$.
	\end{proof}
\end{theorem}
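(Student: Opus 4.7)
The plan is to adapt the classical single-graded Jacobian dual criterion of \cite{Simis_cremona,AB_INITIO} and its bigraded version in \cite{EFFECTIVE_BIGRAD} to the full multi-graded setting, using as the two main inputs the Rees algebra identification of \autoref{lem_birat_implies_isom_Rees} and the rank bounds of \autoref{ineq_dim_jdrank}. The equivalence $(ii)\Leftrightarrow (iii)$ should be immediate: $\psi$ is block diagonal, so $\rank_S(\psi\otimes_{\kk[\yy]}S)=\sum_{i=1}^m\rank_S(\psi_i\otimes_{\kk[\yy]}S)$, and each summand is bounded above by $r_i$ thanks to \autoref{ineq_rank_phi_i}. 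Hence equality in $(iii)$ forces equality in each $\psi_i$.

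For $(i)\Rightarrow(ii)$, I would exploit \autoref{lem_birat_implies_isom_Rees}: birationality yields an isomorphism $\Rees_R(I)\cong \Rees_S(J_1\oplus\cdots\oplus J_m)$ induced by the identity of $\kk[\xx,\yy]$, hence an equality of defining ideals $(\REQ,\mathfrak{a}_1,\ldots,\mathfrak{a}_m)=(\mathcal{J},\mathfrak{b})$. Then, I would restrict this equality to the multi-graded strand of degree $(0,\ldots,\underbrace{1}_{i\text{-th}},\ldots,0,*)$. On the left, this strand is by definition $I_1(\mathbf{x}_i\cdot(\psi_i^t\otimes_{\kk[\yy]}S))$ modulo $\mathfrak{b}$; on the right, it provides a presentation of $\Sym_S(\mathbf{g}_i)$ whose relation matrix is $\Syz_S(\mathbf{g}_i)$. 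Matching column spaces in $S^{r_i+1}$ and using that the $g_{i,j}$ have no common factor (so $\rank_S\Syz_S(\mathbf{g}_i)=r_i$) should force $\rank_S(\psi_i\otimes_{\kk[\yy]}S)=r_i$.

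The substantive direction is $(ii)\Rightarrow(i)$, where an inverse must be produced from scratch. Under the rank hypothesis I would, for each $i$, pick an $r_i\times(r_i+1)$ submatrix $M_i$ of $\psi_i$ with $\rank_S(M_i\otimes_{\kk[\yy]}S)=r_i$, and form its signed ordered maximal minors $\Delta_0(\yy),\ldots,\Delta_{r_i}(\yy)$. The Hilbert--Koszul lemma (\cite[Proposition 2.1]{AB_INITIO}) then ensures that the vector $e_a\Delta_b(\yy)-e_b\Delta_a(\yy)$ lies in the column space of $M_i^t$, hence of $\psi_i^t$, which translates into the relation $x_{i,a}\Delta_b(\yy)-x_{i,b}\Delta_a(\yy)\in(\REQ,\mathfrak{a}_1,\ldots,\mathfrak{a}_m)$. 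Pushing this through the presentation $\kk[\xx,\yy]\twoheadrightarrow\Rees_R(I)$ sending $y_j\mapsto f_jt$ yields $x_{i,a}\Delta_b(\mathbf{f})=x_{i,b}\Delta_a(\mathbf{f})$ in the domain $R$, so the tuple $(\Delta_0(\yy),\ldots,\Delta_{r_i}(\yy))$ defines a rational map $Y\dashrightarrow X_i$; assembling these for $i=1,\ldots,m$ produces the candidate inverse $\mathcal{G}$, whose explicit form matches the statement.

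The main delicate point I expect is ensuring that each tuple $(\Delta_0(\mathbf{f}),\ldots,\Delta_{r_i}(\mathbf{f}))$ is not identically zero in $R$, so that $\mathcal{G}$ is a genuine rational map rather than undefined everywhere; this should follow from the inclusion $S\cong\kk[\mathbf{f}]\cong\kk[\mathbf{f}t]\hookrightarrow \Rees_R(I)$ combined with the rank condition on $M_i$, which guarantees that some $\Delta_j(\yy)\in S$ remains nonzero after the substitution $\yy\mapsto\mathbf{f}$. Once this is cleared, verifying that $\mathcal{G}$ is actually the inverse of $\mathcal{F}$ reduces to reading off the relations $x_{i,a}\Delta_b(\mathbf{f})=x_{i,b}\Delta_a(\mathbf{f})$ just obtained, which exhibit the composition $\mathcal{G}\circ\mathcal{F}$ as multiplication by a nonzero scalar on each projective factor.
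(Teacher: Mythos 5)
Your proposal follows essentially the same route as the paper's proof: the same use of \autoref{lem_birat_implies_isom_Rees} and the graded-strand comparison with $\Syz_S(\mathbf{g}_i)$ for $(i)\Rightarrow(ii)$, the same Hilbert--Koszul minor construction, substitution into $\Rees_R(I)$, and non-vanishing argument for $(ii)\Rightarrow(i)$, and the same block-diagonal plus \autoref{ineq_rank_phi_i} argument for $(ii)\Leftrightarrow(iii)$. The only slight imprecision is that the relations $x_{i,a}\Delta_b(\mathbf{f})=x_{i,b}\Delta_a(\mathbf{f})$ exhibit $(\Delta_0(\mathbf{f}),\ldots,\Delta_{r_i}(\mathbf{f}))$ as proportional to $(x_{i,0},\ldots,x_{i,r_i})$ up to a nonzero multi-homogeneous form, not a scalar, but this does not affect the argument.
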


To illustrate this theorem, we provide two corollaries. The first one is a rigorous translation of birationality in terms of an isomorphism between the corresponding  Rees algebras; this result is the multi-graded version of \cite[Proposition 2.1]{Simis_cremona}. The second is a specific birationality criterion dedicated to some particular monomial maps. 

\begin{corollary}
	The rational map $\mathcal{F}:X=X_1\times \cdots \times X_m \dashrightarrow Y$ is birational with inverse $\mathcal{G}$ if and only if  $\FF$ is dominant, the image of $\mathcal{G}$ is $X$, and the identity map of $\kk[\mathbf{x},\mathbf{y}]$ induces a $\kk$-algebra isomorphism between the Rees algebra $\Rees_R(I)$ and the multi-graded Rees algebra $\Rees_S(J_1\oplus J_2 \oplus \cdots \oplus J_m)$.
	\begin{proof}		
		One implication was proved in \autoref{lem_birat_implies_isom_Rees}. 
		Let us assume that $\FF$ and $\mathcal{G}$ are dominant and the identity map of $\kk[\mathbf{x},\mathbf{y}]$ induces an isomorphism between $\Rees_R(I)$ and $\Rees_S(J_1\oplus\cdots\oplus J_m)$.
		
		As in \autoref{ineq_dim_jdrank}, let $E = \Coker_S(\psi^t \otimes_{\kk[\mathbf{y}]} S)$. 
		Identity \autoref{containt_Sym_E_J} gives us a canonical isomorphism of $S$-algebras
		$$
		 \Sym_S(E)  \cong \Sym_S(J_1\oplus \cdots \oplus J_m).
		$$
		From the assumption $\Rees_S(J_1\oplus\cdots\oplus J_m) \cong \Rees_R(I)$, we get the following isomorphisms
		$$
		\Rees_S(E) \cong \Rees_S(J_1\oplus\cdots\oplus J_m) \cong \Rees_R(I),
		$$
		which are induced by the identity map on $\kk[\mathbf{x},\mathbf{y}]$.
		
		Performing the same computation of \autoref{ineq_dim_jdrank}, now we  get 	
	\begin{align*}
		\dim(S) + \sum_{i=1}^m(r_i+1) - \rank_S(\psi \otimes_{\kk[\mathbf{y}]} S) &= \dim(R) +1\\
		\dim(Y) + m +1 + \sum_{i=1}^m r_i -  \rank_S(\psi \otimes_{\kk[\mathbf{y}]} S) &= \dim(X_1)+\cdots+\dim(X_m)+m+1.
	\end{align*}
	Since $\FF$ and $\mathcal{G}$ are dominant,  we have $\dim(Y)=\dim(X_1)+\cdots+\dim(X_m)$.
	So it follows that 
	$
	\rank_S(\psi \otimes_{\kk[\mathbf{y}]} S) = \sum_{i=1}^{m}r_i.
	$ 
		
		Therefore, from \autoref{jacob_main_crit} we have that $\mathcal{F}$ is birational. Let us denote by $\mathcal{G}^{\prime}$ its inverse.
		Let $J_1^{\prime}, \ldots, J_m^{\prime}$ be the base ideals of $\mathcal{G}^{\prime}$.
		Applying  \autoref{lem_birat_implies_isom_Rees}, we have that the identity map of $\kk[\mathbf{x},\mathbf{y}]$ induces the following isomorphisms
		$$
		\Rees_S(J_1^{\prime} \oplus \cdots \oplus J_m^{\prime}) \cong \Rees_R(I) \cong \Rees_S(J_1 \oplus \cdots \oplus J_m).
		$$
		In particular, we have an isomorphism between the symmetric algebras of $J_1\oplus\cdots\oplus J_m$ and $J_1^{\prime}\oplus\cdots\oplus J_m^{\prime}$ over $S$, which implies an equality between their syzygies.
		Therefore, the tuples defining $\mathcal{G}$ and $\mathcal{G}^{\prime}$ are proportional and so they define the same rational map.
	\end{proof}
\end{corollary}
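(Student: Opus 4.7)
The forward direction is immediate: if $\FF$ is birational with inverse $\mathcal{G}$, then $\FF$ is dominant, $\mathcal{G}$ surjects onto $X$, and \autoref{lem_birat_implies_isom_Rees} supplies the Rees algebra isomorphism. The real work is the converse, and my plan is to route it through \autoref{jacob_main_crit}(iii), reducing the birationality of $\FF$ to the single rank condition $\rank_S(\psi \otimes_{\kk[\yy]} S) = r_1 + \cdots + r_m$ on the full Jacobian dual matrix.

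First I would translate the hypothesis $\Rees_R(I) \cong \Rees_S(J_1 \oplus \cdots \oplus J_m)$, induced by the identity on $\kk[\xx,\yy]$, into the ideal equality $(\REQ,\mathfrak{a}_1,\ldots,\mathfrak{a}_m) = (\mathcal{J},\mathfrak{b})$. Extracting the multi-graded strands of multi-degree $(0,\ldots,1,\ldots,0,*)$, exactly as in the proof of \autoref{jacob_main_crit}, identifies $I_1(\xx_i \cdot (\psi_i^t \otimes_{\kk[\yy]} S))$ with $I_1(\xx_i \cdot \Syz_S(\mathbf{g}_i))$, and repackaging this would yield an isomorphism of $S$-algebras $\Sym_S(E) \cong \Sym_S(J_1 \oplus \cdots \oplus J_m)$, where $E = \Coker_S(\psi^t \otimes_{\kk[\yy]} S)$. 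Combining this with the hypothesis and with the canonical surjection $\Sym_S(E) \twoheadrightarrow \Rees_R(I)$ already built in the proof of \autoref{ineq_dim_jdrank} would then produce an $S$-algebra isomorphism $\Rees_S(E) \cong \Rees_R(I)$.

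Next I would extract the rank equality from a dimension count. The formula $\dim(\Rees_S(E)) = \dim(S) + \sum_{i=1}^m(r_i+1) - \rank_S(\psi \otimes_{\kk[\yy]} S)$ of \cite[Proposition 2.2]{Rees_alg_modules} (already used in \autoref{ineq_dim_jdrank}) together with $\dim(\Rees_R(I)) = \dim(R)+1$ and the equality $\dim(Y) = \dim(X) = \sum_i \dim(X_i)$ forced by the dominance of $\FF$ and the surjectivity of $\mathcal{G}$ onto $X$, would deliver exactly $\rank_S(\psi \otimes_{\kk[\yy]} S) = r_1+\cdots+r_m$. Then \autoref{jacob_main_crit} immediately yields birationality of $\FF$.

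To finish I would identify the inverse as $\mathcal{G}$. Let $\mathcal{G}'$ denote the genuine inverse of $\FF$, with base ideals $J_1',\ldots,J_m'$; applying \autoref{lem_birat_implies_isom_Rees} to $\mathcal{G}'$ and composing with the hypothesis would give an isomorphism $\Rees_S(J_1 \oplus \cdots \oplus J_m) \cong \Rees_S(J_1' \oplus \cdots \oplus J_m')$ fixing $\kk[\xx,\yy]$, and comparing first symmetric powers would force the syzygies of $\mathbf{g}_i$ and $\mathbf{g}_i'$ to coincide, so the defining tuples are proportional and $\mathcal{G}=\mathcal{G}'$ as rational maps. The step I expect to demand the most care is the first one: the multi-graded bookkeeping needed to upgrade the equality of ideal strands of multi-degree $(0,\ldots,1,\ldots,0,*)$ into an honest isomorphism of $S$-symmetric algebras, rather than merely matching presentation ideals after cancellation of $\mathfrak{b}$.
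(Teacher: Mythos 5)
Your proposal is correct and follows essentially the same route as the paper's proof: reduce the Rees-algebra hypothesis to the identity of degree-$(0,\ldots,1,\ldots,0,*)$ strands giving $\Sym_S(E)\cong\Sym_S(J_1\oplus\cdots\oplus J_m)$, deduce $\Rees_S(E)\cong\Rees_R(I)$, run the dimension count of \autoref{ineq_dim_jdrank} as an equality to get $\rank_S(\psi\otimes_{\kk[\yy]}S)=r_1+\cdots+r_m$, invoke \autoref{jacob_main_crit}, and then identify $\mathcal{G}$ with the true inverse by comparing syzygies via \autoref{lem_birat_implies_isom_Rees}. The only difference is that you spell out explicitly the strand-matching that the paper cites as \autoref{containt_Sym_E_J}, which is harmless.
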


Now, we focus on the case of a monomial multi-graded rational map $\FF:{\left(\PP^1\right)}^s \dashrightarrow \PP^s$. We assume that $I=\left(\xx^{\alpha_0},\xx^{\alpha_1},\ldots,\xx^{\alpha_s}\right)$, where each $\alpha_i=\left(\alpha_{i,1},\ldots,\alpha_{i,2s}\right)$ is a vector of $2s$ entries, and $\xx^{\alpha_i}$ denotes the monomial
$$
\xx^{\alpha_i} = x_{1,0}^{\alpha_{i,1}}x_{1,1}^{\alpha_{i,2}}\;\cdots\; x_{s,0}^{\alpha_{i,2s-1}}x_{s,1}^{\alpha_{i,2s}}.
$$

In this setting, the presentation \autoref{presentation_Rees} of $\Rees(I)$ can be encoded by the following matrix: 
\begin{equation}
\label{matrix_def_Rees}
M = \left( 
\begin{array}{cccccccc}
e_1 & e_2 & \ldots & e_{2s}  & \alpha_{0,1} & \alpha_{1,1} & \ldots & \alpha_{s,1} \\
&&&&\vdots & \vdots & \vdots & \vdots \\
&&&&\alpha_{0,2s} & \alpha_{1,2s} & \ldots & \alpha_{s,2s} \\
&&&&1 & 1 & \ldots & 1
\end{array}
\right),
\end{equation}
where $e_1, e_2, \ldots e_{2s}$ are the first $2s$ unit vectors in $\mathbb{Z}^{2s+1}$.
For any integer vector $\beta \in \ZZ^{3s+1}$, we denote by ${\xx\yy}^{\beta}$ the following monomial
$$
{\xx\yy}^{\beta}=x_{1,0}^{\beta_{1}}x_{1,1}^{\beta_{2}}\;\cdots\; x_{s,0}^{\beta_{2s-1}}x_{s,1}^{\beta_{2s}}\;y_0^{\beta_{2s+1}}y_1^{\beta_{2s+2}}\;\cdots\;y_s^{\beta_{3s+1}}.
$$
The ideal of defining equations of $\Rees(I)$ is a toric ideal (see \cite[Chapter 4]{STURMFELS_GROBNER}). It is generated by the following binomials 
\begin{equation}
\label{descript_toric_case}
\REQ = \left({\xx\yy}^{\beta^+}-{\xx\yy}^{\beta^-} \mid M\beta=0, \;\; \beta=\beta^+-\beta^-,\;\; \beta^+,\beta^-\ge 0\right).
\end{equation}

The following corollary contains a very effective way of testing the birationality of $\FF$, which can be done for instance by using Hermite normal form algorithms.

\begin{corollary}\label{cor:monomialmaps}	
	Let $\FF:{\left(\PP^1\right)}^s \dashrightarrow \PP^s$ be a monomial dominant multi-graded rational map. 
	Let $A$ be the submatrix of $M$ in \autoref{matrix_def_Rees} given by the last $s+1$ columns.
	Then, $\FF$ is birational if and only if the following conditions 	are satisfied for each $i=1,\ldots,s$:
	\begin{equation}
	\label{sys_eqs_toric_case}
	\big\{\gamma \in \ZZ^{s+1} \;  \mid  \; A \gamma =  e_{2i-1} - e_{2i} \big\} \neq \emptyset.
	\end{equation}	
	\begin{proof}
		From \autoref{jacob_main_crit} we only need to check that 
		$$
		\REQ_{(0,\ldots,\underbrace{1}_{i\text{-th}},\ldots,0,*)}\neq 0 \quad\text{ for each }\quad i=1,\ldots,m.
		$$ 
		By the description of \autoref{descript_toric_case}, this inequality is equivalent to the solution of the  systems of equations given in \autoref{sys_eqs_toric_case}.
	\end{proof}
\end{corollary}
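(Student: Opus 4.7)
The plan is to apply the multi-graded Jacobian dual criterion of \autoref{jacob_main_crit} in the monomial setting. Since every $X_i$ is $\PP^1$, each $r_i$ equals $1$, and \autoref{ineq_rank_phi_i} forces $\rank_S(\psi_i \otimes_{\kk[\yy]} S)\le 1$. Consequently the rank attains $r_i=1$ exactly when $\psi_i \otimes_{\kk[\yy]} S$ is not the zero matrix, so \autoref{jacob_main_crit} reduces birationality of $\FF$ to the non-vanishing of $\psi_i \otimes_{\kk[\yy]} S$ for every $i=1,\ldots,s$.

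Next I would make this non-vanishing condition combinatorially explicit. Since all $\mathfrak{a}_j$ are zero, the rows of $\psi_i$ come from a minimal generating set of $\REQ$ in multi-degree $(0,\ldots,1_i,\ldots,0,\ast)$. By the toric description in \autoref{descript_toric_case}, each such generator is a binomial $\xx\yy^{\beta^+}-\xx\yy^{\beta^-}$ with $M(\beta^+-\beta^-)=0$, where $\beta^+$ and $\beta^-$ each carry total $\xx_i$-degree $1$ and zero $\xx_j$-degree for $j\neq i$. Such a binomial is of one of two shapes: either (A) $x_{i,0}\yy^{\gamma^+}-x_{i,1}\yy^{\gamma^-}$ (using different $\xx_i$-variables on the two sides), or (B) $x_{i,j}(\yy^{\gamma^+}-\yy^{\gamma^-})$ with $\yy^{\gamma^+}-\yy^{\gamma^-}\in \mathfrak{b}$ (same $\xx_i$-variable on both sides). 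Type (B) binomials contribute a row of $\psi_i$ that vanishes in $S$, whereas a type (A) binomial contributes the row $(\yy^{\gamma^+},-\yy^{\gamma^-})$, whose entries are monomials that remain nonzero in $S=\kk[\mathbf{f}]$. Therefore $\psi_i \otimes_{\kk[\yy]} S\neq 0$ if and only if at least one type (A) binomial exists.

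Finally I would translate the existence of a type (A) binomial to the lattice-point condition of the statement. Writing $M\beta=0$ for a binomial $x_{i,0}\yy^{\gamma^+}-x_{i,1}\yy^{\gamma^-}$ gives $e_{2i-1}-e_{2i}+A(\gamma^+-\gamma^-)=0$ in $\ZZ^{2s+1}$, which, after setting $\gamma:=\gamma^--\gamma^+$, becomes $A\gamma=e_{2i-1}-e_{2i}$. Since every integer vector in $\ZZ^{s+1}$ can be written as a difference of two nonnegative integer vectors, the existence of such a $\gamma$ is equivalent to the existence of a type (A) binomial. Combining with the previous step yields the claimed equivalence. The only genuine obstacle is bookkeeping: aligning the multi-degree conventions of \autoref{subsection_jacobian_criterion} with the toric description of $\Rees(I)$, and verifying that type (A) binomials really survive in $\psi_i \otimes_{\kk[\yy]} S$ while type (B) binomials do not. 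Once that is done, the equivalence between birationality of $\FF$ and solvability of the system $A\gamma=e_{2i-1}-e_{2i}$ for every $i$ follows directly.
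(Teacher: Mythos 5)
Your proposal is correct and takes essentially the same route as the paper: reduce via \autoref{jacob_main_crit} (using $r_i=1$, so the rank condition is just non-vanishing) to the existence of a nonzero element of $\REQ$ in multi-degree $(0,\ldots,1,\ldots,0,*)$, and translate this through the toric description \autoref{descript_toric_case} into solvability of $A\gamma=e_{2i-1}-e_{2i}$. Your type (A)/(B) binomial analysis merely spells out details the paper leaves implicit, and it is in fact even simpler here because the map is dominant onto $\PP^s$, so $\mathfrak{b}=0$ and the type (B) binomials are zero.
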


\subsection{Linear syzygies and some consequences}
\label{subsection_conseq_jacobian}

The birationality criterion provided in \autoref{jacob_main_crit} requires the computation of the equations of the Rees algebra of the base ideal of a rational map. In this subsection, we investigate how the syzygies of the base ideal can be used instead in order to deduce, or to characterize, the birationality of a multi-graded rational map.

\begin{notation}
	Let $\varphi$ be the matrix of syzygies of $I$ whose entries are multi-homogeneous polynomials. We denote by $\varphi_1$ the submatrix of $\varphi$ whose columns are the columns of $\varphi$ corresponding to syzygies of $I$ of multi-degree $(1,\ldots,0),$ $(0,1,0,\ldots)$, $\ldots$, or $(0,\ldots,1)$.
	The matrix $\varphi_1$ is called the linear part of the matrix $\varphi$.
\end{notation}

The following proposition is based on \cite[Theorem 3.2]{AB_INITIO} and \cite[Proposition 3]{EFFECTIVE_BIGRAD}.

\begin{proposition}
	\label{prop_linear_syz_imply_birat}
	Let $\mathcal{F}: \PP^{r_1} \times \cdots \times \PP^{r_m} \dashrightarrow \PP^{r_1+\cdots+r_m}$ be a dominant rational map. 
	If $\rank(\varphi_1)=r_1+\cdots+r_m$, then $\mathcal{F}$ is birational.
	\begin{proof}
		We choose a matrix $\rho$  with entries in $S$ such that $\yy \cdot \varphi_1 = \xx \cdot \rho$.
		Let $E = \Coker(\rho)$, then the previous equality gives us the isomorphism $\Sym_R\left(\Coker(\varphi_1)\right)\cong \Sym_S\left(E\right)$.
		We present the Rees algebras $\Rees_R(\Coker(\varphi_1))$ and $\Rees_S(E)$ by 
		$$
		\Rees_R(\Coker(\varphi_1)) = \frac{\kk[\mathbf{x},\mathbf{y}]}{(I_1(\mathbf{y}\cdot \varphi_1), \mathcal{T}_1)} \quad\text{ and }\quad 			\Rees_S(E) = \frac{\kk[\mathbf{x},\mathbf{y}]}{(I_1(\mathbf{x}\cdot \rho), \mathcal{T}_2)},
		$$
		where $\mathcal{T}_1$ represents the $R$-torsion of $\Sym_R(\Coker(\varphi_1))$ and $\mathcal{T}_2$ is the $S$-torsion of $\Sym_S(E)$, both lifted to $\kk[\mathbf{x},\mathbf{y}]$.
		Since $S$ is an integral domain and $E$ has rank, then $\Rees_S(E)$ is an integral domain and so $(I_1(\mathbf{x}\cdot \rho), \mathcal{T}_2)$ is a prime ideal.
		
		Let $G(\mathbf{x}, \mathbf{y}) \in \T_1.$ There exists $F(\mathbf{x}) \in \kk[\mathbf{x}] \setminus 0$ such that $F(\mathbf{x})G(\mathbf{x}, \mathbf{y}) \in I_1(\mathbf{y} \cdot \varphi_1) \subset ( I_1(\mathbf{x}\cdot \rho), \mathcal{T}_2)$.
		We assume $G(\mathbf{x}, \mathbf{y}) \not\in (I_1(\mathbf{x}\cdot \rho), \mathcal{T}_2)$, then it follows that $F(\mathbf{x}) \in \T_2$ due to the fact that $(I_1(\xx\cdot \rho), \mathcal{T}_2)$ is prime and the ideal $ I_1(\mathbf{x}\cdot \rho)$ is generated by multi-homogeneous polynomials with positive degree on $\mathbf{y}$.
		Thus, there exists a polynomial $H(\mathbf{y}) \in \kk[\mathbf{y}] \setminus
		0$ such that $H(\mathbf{y})F(\mathbf{x}) \in I_1(\mathbf{x}\cdot \rho)$.
		Since $I_1(\xx\cdot\rho)$ is generated by syzygies of $I$, when we substitute $y_j\mapsto f_j$ then we get $H(\mathbf{f})F(\mathbf{x})=0$.
		From the fact that 		$H(\mathbf{f})\neq 0$ (note that here we have $S\cong\kk[\yy]$), it follows the contradiction $F(\mathbf{x})=0$.
		
		Therefore, we have a surjective $R$-algebra map $\Rees_R(\Coker(\varphi_1)) \twoheadrightarrow \Rees_S(E),$ and so we get the inequality
		\begin{align*}
		\dim(\Rees_S(E)) &\le \dim(\Rees_R(\Coker(\varphi_1))) \\
		\dim(S)  + \sum_{i=1}^m(r_i+1) - \rank(\rho) &\le \dim(R) + 1 + \sum_{i=1}^{m}r_i  - \rank(\varphi_1). 
		\end{align*}
		Substituting $\rank(\varphi_1) = \sum_{i=1}^{m}r_i$, $\dim(S) = 1+\sum_{i=1}^mr_i$ and $\dim(R)=\sum_{i=1}^{m}(r_i+1),$  we get 
		$$
		\sum_{i=1}^mr_i \le \rank(\rho).
		$$
		The inclusion  $I_1\left(\xx\cdot\rho\right) \subset I_1\left(\xx\cdot\psi^t\right)$
		gives us the inequality $\rank(\rho) \le \rank(\psi^t)$.
		Combining this with \autoref{ineq_dim_jdrank} we necessarily get 	$\rank_S(\psi \otimes_{\kk[\mathbf{y}]} S) = \sum_{i=1}^{m}r_i$.
		Therefore, the result follows from \autoref{jacob_main_crit}.
	\end{proof}
\end{proposition}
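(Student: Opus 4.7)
My plan is to reduce to the multi-graded Jacobian dual criterion \autoref{jacob_main_crit}: I will show that the rank hypothesis $\rank(\varphi_1) = \delta := r_1 + \cdots + r_m$ forces $\rank_S(\psi \otimes_{\kk[\mathbf{y}]} S) = \delta$. The starting point is that columns of $\varphi_1$ of multi-degree $\mathbf{e}_i$ correspond to Rees equations $\sum_j s_j y_j \in \REQ$ that are linear in $\mathbf{x}_i$ with coefficients in $S = \kk[\mathbf{y}]$, and such rows already appear in the partial Jacobian dual matrix $\psi_i$. Formally, I will factor
\[
\mathbf{y} \cdot \varphi_1 \;=\; \mathbf{x} \cdot \rho
\]
with $\rho$ a matrix having entries in $S$; then the containment $I_1(\mathbf{x} \cdot \rho) \subseteq I_1(\mathbf{x} \cdot \psi^t)$ yields $\rank(\rho) \le \rank_S(\psi^t \otimes_{\kk[\mathbf{y}]} S)$, and it remains to bound $\rank(\rho)$ from below by $\delta$.

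The bulk of the work is this lower bound, which I plan to obtain via a comparison of Rees algebras. Setting $E = \Coker(\rho)$, the identity $\mathbf{y} \cdot \varphi_1 = \mathbf{x} \cdot \rho$ gives an isomorphism $\Sym_R(\Coker(\varphi_1)) \cong \Sym_S(E)$ as quotients of $\kk[\mathbf{x}, \mathbf{y}]$, and I plan to construct a surjection $\Rees_R(\Coker(\varphi_1)) \twoheadrightarrow \Rees_S(E)$. The delicate step is showing that the $R$-torsion ideal $\T_1$ of $\Sym_R(\Coker(\varphi_1))$ lies in the defining ideal of $\Rees_S(E)$. Since $E$ has positive generic rank over the domain $S$, this latter ideal is prime. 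Given $G \in \T_1$ with $F(\mathbf{x}) G \in I_1(\mathbf{y}\cdot \varphi_1)$ for some nonzero $F \in \kk[\mathbf{x}]$, if $G$ is not in the prime ideal, then $F$ must be; but $F$ is $\mathbf{y}$-free while $I_1(\mathbf{x}\cdot\rho)$ has positive $\mathbf{y}$-degree, which forces $F$ into the $S$-torsion $\T_2$. This yields a nonzero $H(\mathbf{y})$ with $H(\mathbf{y}) F(\mathbf{x}) \in I_1(\mathbf{x}\cdot\rho)$. Applying the substitution $y_j \mapsto f_j$ — a well-defined injection $S \hookrightarrow R$ because $\mathcal{F}$ is dominant — kills the right-hand side via the syzygies of $I$, while $H(\mathbf{f}) \ne 0$, forcing $F = 0$, a contradiction.

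The argument then closes with dimension bookkeeping. Under the hypothesis $\rank(\varphi_1) = \delta$, the cokernel $\Coker(\varphi_1)$ has generic rank $1$ over the domain $R$, so $\dim(\Rees_R(\Coker(\varphi_1))) = \dim(R) + 1 = \delta + m + 1$. The surjection above gives
\[
\dim(S) + \sum_{i=1}^m(r_i + 1) - \rank(\rho) \;=\; \dim(\Rees_S(E)) \;\le\; \delta + m + 1,
\]
which simplifies, using $\dim(S) = \delta + 1$, to $\rank(\rho) \ge \delta$. Chaining with $\rank(\rho) \le \rank_S(\psi^t \otimes_{\kk[\mathbf{y}]} S) \le \delta$, where the last bound is the upper bound from \autoref{ineq_dim_jdrank}, I get equality and \autoref{jacob_main_crit} delivers birationality. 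I expect the torsion comparison to be the main obstacle: one must carefully separate the $\mathbf{y}$-degree-zero part of $F$ in order to descend it into the $S$-torsion of $\Sym_S(E)$, and then leverage the dominance of $\mathcal{F}$ at exactly the right moment to force the contradiction.
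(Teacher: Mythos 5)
Your proposal is correct and follows essentially the same route as the paper's own proof: the factorization $\yy\cdot\varphi_1=\xx\cdot\rho$, the isomorphism of symmetric algebras, the torsion-versus-primality argument (using dominance to get $H(\mathbf{f})\neq 0$) that yields the surjection $\Rees_R(\Coker(\varphi_1))\twoheadrightarrow\Rees_S(E)$, the dimension count giving $\rank(\rho)\ge r_1+\cdots+r_m$, and the final chain through $I_1(\xx\cdot\rho)\subset I_1(\xx\cdot\psi^t)$ into \autoref{ineq_dim_jdrank} and \autoref{jacob_main_crit}. No gaps to report.
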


The above proposition gives a sufficient syzyzy-based property to ensure birationality. In the next result we prove that it becomes also a necessary condition under the assumption that the base ideal is of linear type. This effective birationality criterion is the multi-graded version of \cite[Proposition 3.4]{AB_INITIO}.

\begin{theorem}\label{thm:syzygy-jacobiandual}
	Let $\mathcal{F}: \PP^{r_1} \times \cdots \times \PP^{r_m} \dashrightarrow
	\PP^{r_1+\cdots+r_m}$ be a rational map whose base ideal $I=(\mathbf{f})$ is of linear type.
	Then, the following conditions are equivalent:
	\begin{enumerate}[(i)]
		\item $\mathcal{F}$ is birational.
		\item $\rank(\varphi_1)=r_1+\cdots+r_m$. 
	\end{enumerate}
\end{theorem}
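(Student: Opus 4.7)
The implication $(ii)\Rightarrow(i)$ is \autoref{prop_linear_syz_imply_birat}, which needs no linear type hypothesis. For the converse, the plan is to adapt the single-graded argument of \cite[Proposition 3.4]{AB_INITIO} to the multi-graded setting. Assuming that $\FF$ is birational and that $I$ is of linear type, I would first combine the two hypotheses to obtain the equality $I_1(\yy\cdot\varphi) = \mathcal{J}$ of ideals in $\AAA$: linear type gives $\REQ = I_1(\yy\cdot\varphi)$ since $\Sym_R(I) = \Rees_R(I)$, while birationality identifies $\REQ$ with the defining ideal $\mathcal{J}$ of $\Rees_S(J_1\oplus\cdots\oplus J_m)$ inside $\AAA$ via \autoref{lem_birat_implies_isom_Rees}.

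Next, I would extract the multi-graded piece of multi-degree $(\mathbf{e}_i,\ast)$ on both sides of this equality. A multi-degree argument analogous to the one in the proof of \autoref{jacob_main_crit}$(i)\Rightarrow(ii)$ shows that on the left only columns of $\varphi$ of multi-degree exactly $\mathbf{e}_i$ can contribute, so this piece is generated as a $\kk[\yy]$-module by the entries of $\yy\cdot\varphi_1^{(i)} = \xx_i\cdot\rho_i$, where $\rho_i$ is the matrix of linear forms in $\yy$ uniquely determined by this equality. On the right, the corresponding piece of $\mathcal{J}$ identifies canonically with the syzygy module $\Syz_S(\mathbf{g}_i)$. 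Matching these two $\kk[\yy]$-modules forces each $\mathbf{g}_i$ to admit a linear presentation, identifies $\Coker(\rho_i)$ with $J_i$ as $S$-modules, and gives $\rank_{\kk[\yy]}(\rho_i) = \rank_{\kk[\yy]}(\Syz_S(\mathbf{g}_i)) = r_i$. Since the multi-grading makes $\rho$ block-diagonal in the $\rho_i$'s, this yields $\rank_{\kk[\yy]}(\rho) = \sum_{i=1}^m r_i$.

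The last step, which I expect to be the main obstacle, is to transfer this rank equality from $\rho$ to $\varphi_1$. The canonical isomorphism $\Sym_R(\Coker(\varphi_1)) \cong \Sym_S(\Coker(\rho))$ arising from $\yy\cdot\varphi_1 = \xx\cdot\rho$, combined with the surjection $\Rees_R(\Coker(\varphi_1)) \twoheadrightarrow \Rees_S(\Coker(\rho))$ constructed in the proof of \autoref{prop_linear_syz_imply_birat}, already gives the inequality $\rank_R(\varphi_1) \le \rank_{\kk[\yy]}(\rho)$ via the dimension formula for Rees algebras of modules. For the reverse inequality, I would exploit that under our hypotheses $\Coker(\rho) \cong J_1\oplus\cdots\oplus J_m$ and hence $\Rees_S(\Coker(\rho)) \cong \Rees_S(J_1\oplus\cdots\oplus J_m) \cong \Rees_R(I)$, so that both natural surjections out of $\Sym_R(\Coker(\varphi_1))$ (onto $\Rees_R(I)$ via linear type, and onto $\Rees_S(\Coker(\rho))$ via the canonical Rees quotient) land in the same algebra; showing that they have the same kernel would make the above surjection between Rees algebras an isomorphism and, by comparison of dimension formulas, force $\rank_R(\varphi_1) = \rank_{\kk[\yy]}(\rho) = \sum r_i$. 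The principal technical difficulty, not present in the single-graded setting of \cite[Proposition 3.4]{AB_INITIO}, is precisely the verification that the $R$-torsion and the $S$-torsion of the common symmetric algebra $\Sym_R(\Coker(\varphi_1)) = \Sym_S(\Coker(\rho))$ coincide under these identifications; the inclusion of the first into the second is automatic from the surjection of \autoref{prop_linear_syz_imply_birat}, whereas the reverse inclusion requires a careful tracking of the block structure imposed by the multi-grading.
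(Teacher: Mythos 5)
Your plan for $(i)\Rightarrow(ii)$ has the same skeleton as the paper's proof: linear type gives $\REQ=I_1(\yy\cdot\varphi)$, birationality and \autoref{lem_birat_implies_isom_Rees} identify the strand of $\xx_i$-degree one with $\Syz_S(\mathbf{g}_i)$, one gets the identification $\Sym_S(J_1\oplus\cdots\oplus J_m)\cong\Sym_R\big(\Coker(\varphi_1)\big)$ induced by the identity of $\kk[\xx,\yy]$, and the conclusion is reached by comparing Rees algebras and ranks. However, the step you defer --- that the $S$-torsion $\mathcal{T}$ of $\Sym_S(J_1\oplus\cdots\oplus J_m)$ is carried into the $R$-torsion of $\Sym_R(\Coker(\varphi_1))$, equivalently that the surjection $\Rees_R(\Coker(\varphi_1))\twoheadrightarrow\Rees_S(\Coker(\rho))$ is injective --- is not a routine verification that follows from ``tracking the block structure of the multi-grading''; it is the heart of the argument, and your proposal contains no mechanism for it. This is a genuine gap.

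The paper closes it with two specific ingredients. First, \autoref{lem_torsion_multi_sym_alg} (a multi-graded analogue of Micali's theorem, proved separately by an induction on degrees) shows that $\mathcal{T}$ is annihilated by a power of the \emph{product} ideal $J_1\cdots J_m$; this is where the several ideals interact, and it is not a formal consequence of the single-ideal statement nor of any grading bookkeeping. Second, one pushes the relation $\big((\mathbf{g}_1)\cdots(\mathbf{g}_m)\big)^l\mathcal{T}=0$ through the surjections $\Sym_R(\Coker(\varphi_1))\rightarrow\Rees_R(\Coker(\varphi_1))\rightarrow\Rees_R(I)$: the image of $\big((\mathbf{g}_1)\cdots(\mathbf{g}_m)\big)^l$ in $\Rees_R(I)$ is $\big(\mathbf{g}_1(\mathbf{f}t)\cdots\mathbf{g}_m(\mathbf{f}t)\big)^l$, which is nonzero precisely because $\FF$ is birational (each $\mathbf{g}_i(\mathbf{f})=D_i\,\xx_i\neq 0$), and since $\Rees_R(\Coker(\varphi_1))$ is an integral domain this forces the image of $\mathcal{T}$ there to vanish, i.e.\ $\mathcal{T}$ lands in the $R$-torsion. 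Only then do the surjections $\Rees_S(J_1\oplus\cdots\oplus J_m)\twoheadrightarrow\Rees_R(\Coker(\varphi_1))\twoheadrightarrow\Rees_R(I)$, compared with the isomorphism of \autoref{lem_birat_implies_isom_Rees}, give $\Rees_R(\Coker(\varphi_1))\cong\Rees_R(I)$, hence $\rank(\Coker(\varphi_1))=1$ and $\rank(\varphi_1)=r_1+\cdots+r_m$; your preliminary computation $\rank(\rho)=\sum_i r_i$ is correct but does not by itself transfer to $\varphi_1$ without this torsion comparison. A smaller inaccuracy: for $(ii)\Rightarrow(i)$ the linear type hypothesis is in fact used, since \autoref{prop_linear_syz_imply_birat} assumes $\FF$ dominant while the theorem does not; dominance is exactly what linear type supplies, because $\REQ=I_1(\yy\cdot\varphi)$ is contained in the ideal generated by the $\xx$-variables and therefore contains no nonzero polynomial of $\kk[\yy]$, so the $f_i$ are algebraically independent.
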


To prove this theorem, we will need the following preliminary lemma on the torsion of symmetric algebras in the multi-graded setting. It is essentially an adaptation of \cite{MICALI_REES} to the multi-graded case. As we are following the general setup of \cite{Rees_alg_modules}, $\Rees_R(I_1 \oplus \cdots \oplus I_n)$ means  $\Sym_R(I_1\oplus \cdots \oplus I_n)$ modulo its $R$-torsion. 

\begin{lemma}
	\label{lem_torsion_multi_sym_alg}
	Let $R$ be a Noetherian commutative ring and $I_1,\ldots,I_n$ be ideals having rank one. 
	Then, we have the following relation between (multi-graded) symmetric and Rees algebras
	$$
	\Rees_R(I_1\oplus \cdots \oplus I_n) = \frac{\Sym_R(I_1\oplus \cdots \oplus I_n)}{\HH^0_{I_1\cdots I_n}\big( \Sym_R(I_1\oplus \cdots \oplus I_n) \big)}.
	$$
	In particular, if $R$ is local with maximal ideal $\mm$ and each $I_i$ is $\mm$-primary then we have
	$$
	\Rees_R(I_1\oplus \cdots \oplus I_n) = \frac{\Sym_R(I_1\oplus \cdots \oplus I_n)}{\HL^0\big( \Sym_R(I_1\oplus \cdots \oplus I_n) \big)}.
	$$
	\begin{proof}
		As part of the proof of this lemma we shall obtain that $\Rees_R(I_1 \oplus \cdots \oplus I_n)$  coincides with the usual multi-graded Rees algebra
		$$
		\Rees_R(I_1,\ldots,I_n) = \bigoplus_{i_1,\ldots,i_n} I_1^{i_1}\cdots I_n^{i_n}t_1^{i_1}\cdots t_n^{i_n}.
		$$
		
		By the assumption that each ideal $I_i$ has rank one then we have $\grade(I_i) \ge 1$, and from the Unmixedness Theorem (see e.g.~\cite[Exercise 1.2.21]{BRUNS_HERZOG},  \cite[Theorem 125]{KAP}) we can assume that $I_i=(\mathbf{f}_i)$ where $\mathbf{f}_i=(f_{i,1},\ldots,f_{i,m_i})$ and each $f_{i,j}$ is an $R$-regular element.
		
		Let $\AAA$ be the polynomial ring $\AAA=R[\mathbf{T}_1,\ldots, \mathbf{T}_n]$ where $\mathbf{T}_i=\{T_{i,1},\ldots,T_{i,m_i}\}$ for each $i=1,\ldots,n$.
		The symmetric algebra $\Sym_R(I_1\oplus \cdots \oplus I_n)$ can easily be presented by
		\begin{equation*}
		0 \rightarrow \REQ_1 \rightarrow \AAA \rightarrow \Sym_R(I_1 \oplus \cdots \oplus I_n) \rightarrow 0,
		\end{equation*}
		where $\REQ_1 = \big( I_1(\mathbf{T}_1 \cdot \Syz(\mathbf{f}_i)), \ldots, I_1(\mathbf{T}_n \cdot \Syz(\mathbf{f}_n)) \big)$.
		On the other hand, the Rees algebra can be presented by 
		\begin{align*}
		0 \rightarrow \REQ \rightarrow \AAA &\rightarrow \Rees_R(I_1, \ldots, I_n) \rightarrow 0\\
		\mathbf{T}_i &\mapsto \mathbf{f}_it_i,
		\end{align*}
		where $\REQ$ is the ideal generated by the multi-homogeneous polynomials $F(\mathbf{T}_1, \ldots, \mathbf{T}_n) \in \AAA$ such that $F(\mathbf{f}_1, \ldots, \mathbf{f}_n) = 0$.
		Therefore, we want to analyze the canonical exact sequence 
		$$
		0 \rightarrow (\REQ/\REQ_1) \rightarrow \Sym_R(I_1 \oplus \cdots \oplus I_n) \xrightarrow{\alpha} \Rees_R(I_1, \ldots, I_n) \rightarrow 0.
		$$
		
		It is clear that the $R$-torsion submodule of $\Sym_R(I_1 \oplus \cdots \oplus I_n)$ is contained in $\Ker(\alpha)$, and in particular, by the assumption on the ideals $I_i$, the elements of $\Sym_R(I_1 \oplus \cdots \oplus I_n)$ annihilated by some power ${(I_1\cdots I_n)}^l$ are also contained in $\Ker(\alpha)$.
		If we prove that any element in $\Ker(\alpha)$ is contained in the $R$-torsion submodule of $\Sym_R(I_1\oplus \cdots \oplus I_n)$ and  is annihilated by some power ${(I_1\cdots I_n)}^l$, then we are done because we get the following equality and isomorphism
		$$
		\Rees_R(I_1 \oplus \cdots \oplus I_n) = \frac{\Sym_R(I_1\oplus \cdots \oplus I_n)}{\HH^0_{I_1\cdots I_n}\big( \Sym_R(I_1\oplus \cdots \oplus I_n) \big)} \cong \Rees_R(I_1, \ldots,I_n).
		$$
		
		By the assumption that all the $f_{i,j}$ are $R$-regular, the proof of the two previous assertions will follow from the next claim.

		\noindent \textbf{Claim.} Let $F \in \REQ.$  Then, for any element of the form $f_{1,j_1}f_{2,j_2}\cdots f_{n,j_n}$ (i.e. a generator of $I_1\cdots I_n$), there exists some integer $l> 0$ such that $(f_{1,j_1}f_{2,j_2}\cdots f_{n,j_n})^l F \in \REQ_1$.
		
		\noindent
	    {\textit{Proof of the claim.}}
		Fix any generators $f_{1,j_1} \in I_1,  f_{2,j_2} \in I_2, \ldots, f_{n,j_n} \in I_n$.
		Let $F \in \REQ$ be multi-homogeneous  of multi-degree $(d_1,d_2,\ldots,d_n)$ we shall proceed by induction on $d=d_1+\cdots +d_n$. 
		In the inductive step,  it is enough to prove that there exists integers $\alpha_1\ge 0,\ldots,\alpha_n\ge 0$ such that
		$$f_{1,j_1}^{\alpha_1}f_{2,j_2}^{\alpha_2}\cdots f_{n,j_n}^{\alpha_n} F \in \REQ_1.
		$$
		If $d = 1$ then $F$ clearly satisfies the previous condition.  
		So, we assume that $d>1$ and by simply ordering the variables $\mathbf{T}_i$ we may suppose that $d_1\ge 1$. 
		We can write $F$ in the following way
		$$
		F = \sum_{k=1}^{m_1} T_{1,k}H_{k}\Big( T_{1,k},\ldots,T_{1,m_1},\mathbf{T}_2, \ldots, \mathbf{T}_n\Big)
		$$
		Then we define the following polynomial 
		$$
		G = \sum_{k=1}^{m_1} T_{1,k}H_{k}\Big( f_{1,k},\ldots,f_{1,m_1},\mathbf{f}_2, \ldots, \mathbf{f}_n\Big)
		$$
		which belong $\REQ_1$.  We compute the polynomial 
		\begin{align*}
		f_{1,j_1}^{d_1-1}			f_{2,j_2}^{d_2}\cdots
		f_{n,j_n}^{d_n} F - 			T_{1,j_1}^{d_1-1}			T_{2,j_2}^{d_2}&\cdots
		T_{n,j_n}^{d_n} G = \\\sum_{k=1}^{m_1} T_{1,k}\Bigg( f_{1,j_1}^{d_1-1}			f_{2,j_2}^{d_2}&\cdots
		f_{n,j_n}^{d_n}H_{k}\Big( T_{1,k},\ldots,T_{1,m_1},\mathbf{T}_2, \ldots, \mathbf{T}_n\Big) -\\ &T_{1,j_1}^{d_1-1}			T_{2,j_2}^{d_2}\cdots
		T_{n,j_n}^{d_n} 
		H_{k}\Big( f_{1,k},\ldots,f_{1,m_1},\mathbf{f}_2, \ldots, \mathbf{f}_n\Big)
		\Bigg),
		\end{align*}
		where each  polynomial 
		$$
		f_{1,j_1}^{d_1-1}			f_{2,j_2}^{d_2}\cdots
		f_{n,j_n}^{d_n}H_{k}\Big( T_{1,k},\ldots,T_{1,m_1},\mathbf{T}_2, \ldots, \mathbf{T}_n\Big) - T_{1,j_1}^{d_1-1}			T_{2,j_2}^{d_2}\cdots
		T_{n,j_n}^{d_n} 
		H_{k}\Big( f_{1,k},\ldots,f_{1,m_1},\mathbf{f}_2, \ldots, \mathbf{f}_n\Big)
		$$
		belongs to $\REQ$ and has total degree smaller than $d$.
		Therefore, the proof of the claim follows by induction. 
	\end{proof}
\end{lemma}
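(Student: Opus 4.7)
The plan is to adapt Micali's classical identification $\Rees_R(I) = \Sym_R(I)/\HH^0_I(\Sym_R(I))$ for a single rank-one ideal to the multi-graded situation with $n$ ideals, keeping in mind the convention (as in \cite{Rees_alg_modules}) that $\Rees_R(I_1\oplus\cdots\oplus I_n)$ is the symmetric algebra modulo its $R$-torsion. First, using the rank-one hypothesis together with the Unmixedness Theorem, I will choose generators $\mathbf{f}_i = (f_{i,1},\ldots,f_{i,m_i})$ of each $I_i$ consisting entirely of $R$-regular elements. Setting $\AAA = R[\mathbf{T}_1,\ldots,\mathbf{T}_n]$, the symmetric algebra admits the presentation $\AAA/\REQ_0$ with $\REQ_0 = \big(I_1(\mathbf{T}_i \cdot \Syz(\mathbf{f}_i))\big)_{i=1,\ldots,n}$, while the usual multi-graded Rees algebra $\bigoplus I_1^{i_1}\cdots I_n^{i_n}\, t_1^{i_1}\cdots t_n^{i_n}$ is presented as $\AAA/\REQ$ via $T_{i,j}\mapsto f_{i,j}t_i$. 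Since $\REQ_0\subset\REQ$, there is a canonical surjection $\alpha: \Sym_R(I_1\oplus\cdots\oplus I_n) \twoheadrightarrow \Rees_R(I_1,\ldots,I_n)$, and because the target sits inside $R[t_1,\ldots,t_n]$ it is $R$-torsion-free; hence $\Ker(\alpha)$ automatically contains the $(I_1\cdots I_n)$-torsion of the symmetric algebra, giving one direction of the desired equality.

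The substance will be the reverse inclusion, which I will phrase as the following claim: for every multi-homogeneous $F \in \REQ$ of multi-degree $(d_1,\ldots,d_n)$ and every choice of generators $(f_{1,j_1},\ldots,f_{n,j_n})$, there exist exponents $\alpha_1,\ldots,\alpha_n \geq 0$ with $f_{1,j_1}^{\alpha_1}\cdots f_{n,j_n}^{\alpha_n} F \in \REQ_0$. I will prove this by induction on $d = d_1+\cdots+d_n$. The base case $d=1$ is immediate: a multi-linear $F$ that vanishes under $T_{i,j}\mapsto f_{i,j}t_i$ encodes exactly a syzygy of one of the $\mathbf{f}_i$'s, hence already belongs to $\REQ_0$. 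For the inductive step, after reordering so that $d_1 \geq 1$, I will write $F = \sum_k T_{1,k}\,H_k(T_{1,k},\ldots,T_{1,m_1},\mathbf{T}_2,\ldots,\mathbf{T}_n)$, build the ``straightened'' element
$$
G = \sum_k T_{1,k}\, H_k(f_{1,k},\ldots,f_{1,m_1},\mathbf{f}_2,\ldots,\mathbf{f}_n) \;\in\; \REQ_0,
$$
and examine the key combination
$$
f_{1,j_1}^{d_1-1}f_{2,j_2}^{d_2}\cdots f_{n,j_n}^{d_n}\, F \;-\; T_{1,j_1}^{d_1-1}T_{2,j_2}^{d_2}\cdots T_{n,j_n}^{d_n}\, G \;=\; \sum_k T_{1,k}\, D_k,
$$
where direct substitution confirms that each $D_k$ lies in $\REQ$ while having total multi-degree strictly less than $d$. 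Applying the induction hypothesis to each $D_k$ and then multiplying by a sufficiently large common power of $f_{1,j_1}\cdots f_{n,j_n}$ clears the entire right-hand side into $\REQ_0$, yielding the claim.

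Once the claim is established, the regularity of every $f_{i,j}$ (ensured by the rank-one hypothesis) implies that $\Ker(\alpha)$ is precisely the $(I_1\cdots I_n)$-torsion submodule $\HH^0_{I_1\cdots I_n}(\Sym_R(I_1\oplus\cdots\oplus I_n))$, and since $\Rees_R(I_1,\ldots,I_n)$ is $R$-torsion-free this torsion coincides with the full $R$-torsion of $\Sym_R(I_1\oplus\cdots\oplus I_n)$. Consequently $\Rees_R(I_1\oplus\cdots\oplus I_n)$ simultaneously equals $\AAA/\REQ$ and $\Sym_R(I_1\oplus\cdots\oplus I_n)/\HH^0_{I_1\cdots I_n}\big(\Sym_R(I_1\oplus\cdots\oplus I_n)\big)$, which is the first assertion. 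The local corollary is then immediate: if $R$ is local with maximal ideal $\mm$ and every $I_i$ is $\mm$-primary, then $I_1\cdots I_n$ is $\mm$-primary as well, so $\HH^0_{I_1\cdots I_n}$ and $\HL^0$ coincide on every $R$-module.

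The main obstacle is the inductive step of the claim: one must arrange that the correction term $T_{1,j_1}^{d_1-1}\cdots T_{n,j_n}^{d_n}G$ truly cancels enough of $F$ to drop the total multi-degree, while the residual pieces $D_k$ remain inside $\REQ$ (not merely in $\REQ$ modulo torsion). The delicate point is that the substitution $T_{1,k}\mapsto f_{1,k}$ is performed only \emph{inside} the polynomials $H_k$, leaving the outer linear factors $T_{1,k}$ untouched; this is what makes $G$ manifestly an element of $\REQ_0$ and simultaneously forces each $D_k$ to vanish under $T_{i,j}\mapsto f_{i,j}t_i$ by a direct homogeneity computation. Without the regularity of the $f_{i,j}$ supplied by the rank-one assumption, this homogeneity argument would only yield a containment up to nilpotents, so the hypothesis enters at this precise juncture.
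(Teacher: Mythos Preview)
Your proposal is correct and follows essentially the same approach as the paper: the same reduction via the Unmixedness Theorem to regular generators, the same presentations $\AAA/\REQ_0$ and $\AAA/\REQ$, and the identical inductive claim on total multi-degree using the very same ``straightened'' element $G=\sum_k T_{1,k}H_k(f_{1,k},\ldots,f_{1,m_1},\mathbf{f}_2,\ldots,\mathbf{f}_n)$ and the combination $f_{1,j_1}^{d_1-1}\cdots f_{n,j_n}^{d_n}F - T_{1,j_1}^{d_1-1}\cdots T_{n,j_n}^{d_n}G$. The only differences are cosmetic (you write $\REQ_0$ for the paper's $\REQ_1$) and a bit of extra narrative about where exactly the regularity hypothesis is used.
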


	\begin{proof}[Proof of \autoref{thm:syzygy-jacobiandual}]
		$(ii) \Rightarrow (i)$ Since $I$ is of linear type then the polynomials of $\mathbf{f}$ are algebraically independent. Therefore, this implication follows from \autoref{prop_linear_syz_imply_birat}.
		
		$(i) \Rightarrow (ii)$ 
		From the assumption of $\mathcal{F}$ being birational,  let $\mathbf{g}_1, \ldots, \mathbf{g}_m$ be a set of representatives of the inverse map $\mathcal{G}: \PP^{r_1+\cdots+r_m} \dashrightarrow \PP^{r_1} \times \cdots \times \PP^{r_m}$.
		
		Since $I$ is of linear type, we have $\REQ = I_1(\mathbf{y} \cdot \varphi)$ and so we obtain the following equality 
		\begin{equation}
		\label{eq_linear_syz_Jac_dual_mat}
		I_1\left(\mathbf{y} \cdot \varphi_1\right) = I_1\left(\mathbf{x} \cdot \psi^t\right).
		\end{equation}
		Due to the isomorphism obtained in \autoref{lem_birat_implies_isom_Rees}, the module $(\mathbf{g}_1)\oplus \cdots \oplus (\mathbf{g}_m)$ has the following presentation 
		$$
		S^p \xrightarrow{\psi^t} S^{r_1+\cdots+r_m+m} \rightarrow (\mathbf{g}_1)\oplus \cdots \oplus (\mathbf{g}_m) \rightarrow 0.
		$$
		We also also consider the module $E = \Coker(\varphi_1)$ with presentation 
		$$
		R^p \xrightarrow{\varphi_1} R^{r_1+\cdots r_m + 1} \rightarrow E \rightarrow 0.
		$$
		
		From the equality \autoref{eq_linear_syz_Jac_dual_mat} we get an isomorphism $\Sym_S\big((\mathbf{g}_1)\oplus \cdots \oplus (\mathbf{g}_m)\big) \cong \Sym_R(E)$ induced by the identity map of $\kk[\mathbf{x}, \mathbf{y}]$.
		Then, we have the following 
		$$
		\Sym_S\big((\mathbf{g}_1)\oplus \cdots \oplus (\mathbf{g}_m)\big) \cong \Sym_R(E) \twoheadrightarrow \Rees_R(I).
		$$
		
		Let $\mathcal{T}$ be the $S$-torsion of $\Sym_S\big((\mathbf{g}_1)\oplus \cdots \oplus (\mathbf{g}_m)\big)$ and $\lambda$ be the isomorphism 
		$$
		\lambda : \Sym_S\big((\mathbf{g}_1)\oplus \cdots \oplus (\mathbf{g}_m)\big) \xrightarrow{\cong} \Sym_R(E)
		$$
		If we prove that $\lambda(\T)$ is contained in the $R$-torsion of $\Sym_R(E)$,
		we will get the following epimorphism
		$$
		\Rees_S\big((\mathbf{g}_1)\oplus \cdots \oplus (\mathbf{g}_m)\big) \twoheadrightarrow \Rees_R(E) \twoheadrightarrow \Rees_R(I).
		$$
		Therefore, from \autoref{lem_birat_implies_isom_Rees} we  get $\Rees_S((\mathbf{g}_1)\oplus \cdots \oplus (\mathbf{g}_m)) \cong \Rees_R(E) \cong \Rees_R(I)$ and so $\text{rank}(E)=1$ which implies the statement. 
		
		Thus we shall focus on the claim below:\\
		\noindent \textbf{Claim:} $\lambda(\T)$ is contained in the $R$-torsion of $\Sym_R(E)$.
		
		\noindent
		{\textit{Proof of the claim.}}	 			
		First, by applying \autoref{lem_torsion_multi_sym_alg} we get that there exists some $l$ such that $\big((\mathbf{g}_1) \cdots (\mathbf{g}_m)\big)^l\T = 0$.
		Here, we are considering $(\mathbf{g}_1) \cdots (\mathbf{g}_m) \subset S \subset \Sym_S\big((\mathbf{g}_1)\oplus \cdots \oplus (\mathbf{g}_m)\big)$, thus $\big((\mathbf{g}_1) \cdots (\mathbf{g}_m)\big)^l$ lifts to $\kk[\mathbf{x},\mathbf{y}]$ exactly as $\big((\mathbf{g}_1) \cdots (\mathbf{g}_m)\big)^l$. We map into $\Sym_R(E)$ using the canonical map
		\begin{align*}
		\kk[\mathbf{x},\mathbf{y}] \rightarrow \Sym_R(E)\\
		\mathbf{x}_i \mapsto \mathbf{x}_i, \qquad y_i \mapsto e_i,
		\end{align*}
		where $e_i$ are the homogeneous generators of $E$ given by its presentation.
		Summarizing, we have that 
		$$
		\lambda\Big(\big((\mathbf{g}_1) \cdots (\mathbf{g}_m)\big)^l\T \Big) = \big((\mathbf{g}_1(\mathbf{e})) \cdots (\mathbf{g}_m(\mathbf{e}))\big)^l \lambda(\T).
		$$
		
		We have the canonical surjections 
		$$
		\Sym_R(E) \xrightarrow{\phi_1} \Rees_R(E) \xrightarrow{\phi_2} \Rees_R(I) \subset R[t].
		$$
		Also, we can make the identification
		$$
		\phi_2\Big(\phi_1 \Big( \big((\mathbf{g}_1(\mathbf{e})) \cdots (\mathbf{g}_m(\mathbf{e}))\big)^l \Big) \Big) = \big((\mathbf{g}_1(\mathbf{f}t)) \cdots (\mathbf{g}_m(\mathbf{f}t))\big)^l \in \Rees_R(I),
		$$
		and from the birationality assumption we have that $\big((\mathbf{g}_1(\mathbf{f}t)) \cdots (\mathbf{g}_m(\mathbf{f}t))\big)^l \neq 0$.
		Hence, it follows that 
		$$
		\phi_1 \Big( \big((\mathbf{g}_1(\mathbf{e})) \cdots (\mathbf{g}_m(\mathbf{e}))\big)^l \Big) \phi_1\big(\lambda(\T)\big) = 0 \in \Rees_R(E)
		$$
		with $\phi_1 \Big( \big((\mathbf{g}_1(\mathbf{e})) \cdots (\mathbf{g}_m(\mathbf{e}))\big)^l \Big) \neq 0.$ Since $\Rees_R(E)$ is an integral domain, we get our claim $\phi_1\big(\lambda(\T)\big)=0$. 
	\end{proof}

\section{Rational maps in the projective plane with saturated base ideal}\label{section4}

In this section we focus on dominant rational maps $\FF:\PP^2 \dashrightarrow \PP^2$ whose base ideal $I$ is saturated and has dimension $1$. To emphasize our interest in these cases, we recall, for instance, that the base ideal of plane birational maps of degree $d\leq 4$ must be saturated (see \cite[Corollary 1.23]{Hassanzadeh_Simis_Cremona_Sat}).

A straightforward application of the Auslander-Buchsbaum formula yields that the conditions of $I$ being saturated and perfect are equivalent. 
Therefore, we will assume that $I$ has a Hilbert-Burch presentation (see e.g.~\cite[Theorem 20.15]{EISEN_COMM}). We adopt \autoref{nota_single_graded} with $r=2,$ and also the following conditions.
\begin{notation}		
	\label{nota_Hilbert_Burch}
	Assume that $I=(f_0,f_1,f_2) \subset R (=\kk[x_0,x_1,x_2])$ is saturated and $\dim(R/I)=1$.
	The presentation of $I$ is given by 
	\begin{equation}
	\label{Hilbert_Burch_presentation}
	0 \rightarrow R(-d-\mu_1)\oplus R(-d-\mu_2) \xrightarrow{\varphi} {R(-d)}^3 \rightarrow I \rightarrow 0,
	\end{equation}
	where $I$ is generated by the maximal minors of $\varphi$, $\mu_1+\mu_2=d$ and $\mu_1 \le \mu_2$. 
	The matrix of $\varphi,$ which we just denote by $\varphi,$ is 
	$$
	\varphi =  \left(
	\begin{array}{cc}
	a_{0,1} & a_{0,2} \\
	a_{1,1} & a_{1,2} \\
	a_{2,1} & a_{2,2} 
	\end{array}
	\right).
	$$
\end{notation}

The main result of this section is \autoref{thm:birat-mu1=1} where we derive a very simple birationality criterion for rational maps $\FF$ whose base ideal satisfy \autoref{Hilbert_Burch_presentation} with the additional assumption that $\mu_1=1$. This result is based on a complete description of the equations of the Rees algebra of $I$ in this setting, which is given in \autoref{Rees_Eq_mu_1}.

Before going further, we first notice that the degree of $\FF$ under our assumptions is connected to the couple of integers $(\mu_1,\mu_2)$ defined in  \autoref{Hilbert_Burch_presentation}.

\begin{proposition}\label{prop:degFrankA}
	Let $\FF:\PP^2 \dashrightarrow \PP^2$ be a dominant rational map with a dimension $1$ base ideal $I$ that is saturated. Then,
	$$
	\deg(\FF) \;\le\; \mu_1\mu_2
	$$
	with equality if and only if $I$ is locally a complete intersection at its minimal primes.
\end{proposition}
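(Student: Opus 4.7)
The plan is to combine the degree formula of \autoref{thm_degree_formula_base_points} with an explicit computation of $\deg(\BB)$ obtained from the Hilbert--Burch resolution, and then exploit the classical comparison between colength and Hilbert--Samuel multiplicity in a two-dimensional regular local ring.

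First, I would specialize the degree formula \autoref{thm_degree_formula_base_points} to the case $m=1$, $X=Y=\PP^2$, $\delta=2$, $d_1=d$. Since $\FF$ is dominant and both source and target have degree $1$ in $\PP^2$, and the hypothesis $\dim(R/I)=1$ is equivalent to $\dim(\BB)=0$, the formula reduces to
\begin{equation}\label{eq:plandegformula}
\deg(\FF)\;=\;d^{2}-e(\BB).
\end{equation}
Hence it is enough to control $e(\BB)$ from below in terms of $\mu_1\mu_2$.

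Next, I would use the Hilbert--Burch resolution \autoref{Hilbert_Burch_presentation} to compute the Hilbert polynomial of $R/I$. Since $\dim(R/I)=1$, this polynomial is a constant, and an expansion of
$$\binom{n+2}{2}-3\binom{n-d+2}{2}+\binom{n-d-\mu_1+2}{2}+\binom{n-d-\mu_2+2}{2}$$
using the relation $\mu_1+\mu_2=d$ yields the constant $d^{2}-\mu_{1}\mu_{2}$. Because $I$ is assumed to be saturated, this constant equals $\dim_{\kk}(R/I)_{n}$ for $n\gg 0$, which by the very definition recalled in \autoref{subsection_deg_formula} is precisely the degree of the base scheme:
$$\deg(\BB)\;=\;d^{2}-\mu_{1}\mu_{2}.$$

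Finally, I would invoke the inequality $\deg(\BB)\le e(\BB)$ already used in the proof of \autoref{thm_bigrad_map_2_2}: for each minimal prime $\pp$ of $I$, the localization $R_{\pp}$ is a two-dimensional regular local ring and $I_{\pp}$ is $\pp R_{\pp}$-primary, so $\textnormal{length}_{R_{\pp}}(R_{\pp}/I_{\pp})\le e_{I_{\pp}}(R_{\pp})$ with equality if and only if $I_{\pp}$ is a complete intersection. Summing weighted by the residue degrees $[\kk(\pp):\kk]$ gives $\deg(\BB)\le e(\BB)$, with equality if and only if $I$ is locally a complete intersection at its minimal primes. Inserting this in \eqref{eq:plandegformula} yields
$$\deg(\FF)\;=\;d^{2}-e(\BB)\;\le\;d^{2}-\deg(\BB)\;=\;\mu_{1}\mu_{2},$$
with equality exactly under the stated condition.

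The proof is essentially a bookkeeping exercise, the only mild technicality being the Hilbert polynomial computation: one must verify that the coefficients of $n^{2}$ and $n$ in the alternating sum of binomials vanish (as forced by $\dim(R/I)=1$) and extract the constant term correctly. Apart from that, all the ingredients are either direct specializations of earlier results in the paper or standard facts about Hilbert--Samuel multiplicity on regular local rings of dimension two.
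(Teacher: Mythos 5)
Your proposal is correct and follows essentially the same route as the paper: the degree formula of \autoref{thm_degree_formula_base_points} giving $\deg(\FF)=d^2-e(\BB)$, the Hilbert--Burch resolution computing $\deg(\BB)=d^2-\mu_1\mu_2$, and the comparison $\deg(\BB)\le e(\BB)$ with equality exactly when $I$ is locally a complete intersection at its minimal primes. The only difference is that you spell out the local justification of that last comparison (colength versus Hilbert--Samuel multiplicity in the two-dimensional regular localizations), which the paper simply quotes as known.
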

\begin{proof}
	The degree formula of \autoref{thm_degree_formula_base_points} gives us $\deg(\FF) = d^2 - e(\BB)$. 
	We also know that $\deg(\BB)\le e(\BB)$ and $\deg(\BB)=e(\BB)$ if and only if $I$ is locally a complete intersection at its minimal primes. Now, using the resolution \autoref{Hilbert_Burch_presentation} and a simple computation with Hilbert polynomials, we get 
	$$
	\deg(\BB) = {\text{P}}_{R/I}(t)= \binom{t+2}{2}-3\binom{t-d+2}{2}+\binom{t-d-\mu_1+2}{2}	+\binom{t-d-\mu_2+2}{2}	= d^2-\mu_1\mu_2.
	$$
	Therefore, we deduce that $\deg(\FF)\le \mu_1\mu_2$ and  $\deg(\FF)= \mu_1\mu_2$ if and only if $I$ is locally a complete intersection at its minimal primes.
\end{proof}

\subsection{Properties of saturated base ideals}

Below, we gather three technical results on some properties of the base ideal $I$ under our assumptions. We will need them in the sequel.

\begin{lemma}
	\label{lem_prop_I_sat}
	Assume that $\dim(R/I)=1$ and $I$ is saturated. Then, the following statements hold:
	\begin{enumerate}[(i)]

		\item $\HH_I^j(R) \neq 0$ if and only if $j=2$.
		\item $\Ass_R\left(\HH_{I}^2(R)\right)$ is a finite set and equal to 
		$$
		\Ass_R\left(\HH_{I}^2(R)\right) = \Ass_R\left( \Ext_R^2(R/I,R)\right) = \Ass_R\left( R/I \right).
		$$			
	\end{enumerate}
	\begin{proof}		
		$(i)$ From the Grothendieck vanishing theorem \cite[Theorem 6.1.2]{Brodmann_Sharp_local_cohom} we get that $\HH_I^j(R)=0$ for $j > 3$.
		The connection of $\grade(I)$ with local cohomology \cite[Theorem  6.2.7]{Brodmann_Sharp_local_cohom} implies that $\HH_I^j(R)=0$ for $j < 2$ and $\HH_I^2(R)\neq 0$.
		Finally, a graded version of the Lichtenbaum-Hartshorne theorem \cite[Theorem 14.1.16]{Brodmann_Sharp_local_cohom} yields $\HH_I^3(R)=0$.
		
		$(ii)$	 From \cite[Proposition 1.1$(b)$]{MARLEY_ASS_PRIMES_LOCAL_COHOM} we have that $\Ass_R\left(\HH_{I}^2(R)\right) = \Ass_R\left( \Ext_R^2(R/I,R)\right)$.
		The module $\Ext_R^2(R/I,R)$ is the so-called canonical module $\omega_{R/I}$, and its associated primes are given by the unmixed part $I^{\text{un}}$ of $I$ (see \cite[page 250, Lemma 1.9$(c)$]{SCHENZEL_NOTES}), that is
		$$
		\Ass_R\left( \Ext_R^2(R/I,R) \right) = \big\{ \pp \in \Ass_R(R/I) \mid \dim(R/\pp)=1 \big\}.
		$$		
		Since $\dim(R)=3$, we finally get that $I^{\text{un}}$ coincides with $I^{\text{sat}}=I$.
	\end{proof}
\end{lemma}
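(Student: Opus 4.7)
My plan is to bracket $(i)$ from above and below using standard tools, and to reduce $(ii)$ to structural facts about canonical modules of Cohen--Macaulay quotients. First, since $I$ is saturated with $\dim(R/I)=1$ in the three-dimensional polynomial ring $R$, the Hilbert--Burch theorem gives $\pd_R(R/I)=2$, so the Auslander--Buchsbaum formula yields $\depth(R/I)=1=\dim(R/I)$; in particular $R/I$ is Cohen--Macaulay and $\grade(I)=2$. The standard grade characterization of local cohomology then provides $\HH_I^j(R)=0$ for $j<2$ and $\HH_I^2(R)\neq 0$, while Grothendieck vanishing handles $j>\dim R=3$.

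For the remaining case $j=3$, I would invoke the Lichtenbaum--Hartshorne vanishing theorem at the unique graded maximal ideal $\mm=(x_0,x_1,x_2)$: after localizing at $\mm$ one obtains a three-dimensional regular (hence analytically irreducible) local ring $R_{\mm}$ with $\dim(R_{\mm}/IR_{\mm})=1>0$, forcing $\HH_{IR_{\mm}}^3(R_{\mm})=0$. Since $\HH_I^3(R)$ is a graded $R$-module supported at $\mm$, its vanishing after localizing at $\mm$ implies $\HH_I^3(R)=0$; alternatively, one can appeal directly to a graded version of the same theorem.

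For $(ii)$, my plan is to combine two ingredients. First, Marley's theorem asserts $\Ass_R \HH_I^{\grade(I)}(R)=\Ass_R\Ext_R^{\grade(I)}(R/I,R)$, which here specializes to $\Ass_R \HH_I^2(R)=\Ass_R\Ext_R^2(R/I,R)$; finiteness is then automatic. Second, because $R/I$ is Cohen--Macaulay of dimension $1$ over the Gorenstein ring $R$, the module $\Ext_R^2(R/I,R)$ is, up to a degree shift, the canonical module $\omega_{R/I}$, whose associated primes are precisely the minimal primes of $R/I$ of maximal dimension. To match these with $\Ass_R(R/I)$, I would observe that $\grade(I)=2$ forces every $\pp\in\Ass_R(R/I)$ to satisfy $\HT(\pp)\geq 2$, and saturation of $I$ excludes $\pp=\mm$; in the three-dimensional polynomial ring $R$ this pins down $\HT(\pp)=2$ and $\dim(R/\pp)=1$, so that every associated prime of $R/I$ is in fact a minimal prime of maximal dimension, yielding the desired equality.

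The main obstacle I anticipate is twofold: ensuring that Lichtenbaum--Hartshorne is invoked in the correct form for our graded setting, and carefully identifying $\Ass_R(\omega_{R/I})$ with the set of minimal primes of maximal dimension of $R/I$ via the Cohen--Macaulay hypothesis. Both are standard but warrant a brief verification; everything else reduces to grade-sensitive bookkeeping with the Hilbert--Burch resolution.
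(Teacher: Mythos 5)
Your proposal is correct and follows essentially the same route as the paper: for $(i)$, grade considerations plus Grothendieck vanishing and Lichtenbaum--Hartshorne (you reduce the graded statement to the local one at $\mm$ via the observation that $\HH_I^3(R)$ is supported at $\mm$, where the paper simply cites the graded version), and for $(ii)$, Marley's theorem together with the identification of $\Ext_R^2(R/I,R)$ as the canonical module $\omega_{R/I}$. The only cosmetic difference is that you use Cohen--Macaulayness of $R/I$ (which, as you note, comes from saturation and Auslander--Buchsbaum---though the logical order is saturation $\Rightarrow$ depth $1$ $\Rightarrow$ $\pd=2$, not via Hilbert--Burch first) to get $\Ass(\omega_{R/I})=\Ass(R/I)$, whereas the paper invokes Schenzel's description of $\Ass(\omega_{R/I})$ as the top-dimensional associated primes and then uses saturation to conclude; both arguments are sound.
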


Using the present hypotheses we would like to better exploit the exact sequence
$$
0 \rightarrow \EEQ \rightarrow \Sym(I) \rightarrow \Rees(I) \rightarrow 0.
$$
We recall that the symmetric algebra can be easily described with the presentation $\autoref{Hilbert_Burch_presentation}$ of $I$, and its defining equations are given by
\begin{equation}
\label{eqs_Sym}
\left(g_1, g_2\right) = \left(y_0,y_1,y_2 \right) \cdot \varphi.
\end{equation}
Hence, we have an isomorphism 
$$
\Sym(I) \cong \AAA/\left(g_1, g_2\right).
$$
We also have that $\{g_1,g_2\}$ is a regular sequence in $\AAA$ (see \cite[Corollary 2.2]{SIMIS_VASC_SYM_INT}) and so the corresponding Koszul complex 
	\begin{equation}
	\label{koszul_complex_Sym}
	\mathbb{L}_{\bullet}: \quad 0 \rightarrow \AAA(-d,-2) \xrightarrow{\tiny\left(\begin{array}{cc}
		-g_2\\
		g_1
		\end{array}\right)} \AAA(-\mu_1,-1) \;\oplus\; \AAA(-\mu_2,-1) \xrightarrow{\left(g_1,g_2\right)} \AAA 
	\end{equation}
is exact.

\begin{lemma}
	\label{description_torsion_Sym}
	Assume that $\dim(R/I)=1,$ and $I$ is saturated. Then, the torsion submodule $\EEQ$ is described by the exact sequence
		$$
		0 \rightarrow \EEQ \rightarrow \HH_I^2\big(\AAA\big)(-d,-2) \xrightarrow{\tiny\left(\begin{array}{c}
			-g_2\\
			g_1
			\end{array}\right)} \HH_I^2\big(\AAA\big)(-\mu_1,-1) \;\oplus\; \HH_I^2\big(\AAA\big)(-\mu_2,-1).
		$$
	\begin{proof}
		We consider the double complex $\mathbb{L}_{\bullet} \otimes_{R} C_I^{\bullet}$. 
		Computing with the second filtration we obtain the spectral sequence
		$$
		{}^{\text{II}}E_2^{p,-q} = \begin{cases}
		\HH_{I}^p(\Sym(I)) \qquad \text{if } q = 0\\
		0 \qquad \qquad \qquad \quad \text{otherwise.}
		\end{cases}
		$$
		On the other hand, by using the first filtration we get that ${}^{\text{I}}E_1^{-p,q}=\HH_{I}^q(\mathbb{L}_p)$.
		Hence, from \autoref{lem_prop_I_sat}$(i)$, the only row that does not vanish in  ${}^{\text{I}}E_1^{\bullet,\bullet}$ is given by the complex
		$$
		\HH_{I}^2(\mathbb{L}_{\bullet}): \quad0 \rightarrow \HH_{I}^2\big(\AAA\big)(-d,-2) \rightarrow \HH_{I}^2\big(\AAA\big)(-\mu_1,-1) \;\oplus\; \HH_{I}^2\big(\AAA\big)(-\mu_2,-1) \rightarrow \HH_{I}^2\big(\AAA\big) \rightarrow 0.
		$$
		Thus we obtain 
		$$
		{}^{\text{I}}E_2^{-p,q} = \begin{cases}
		\HH_p\big(\HH_{I}^2(\mathbb{L}_{\bullet})\big) \qquad \text{if } q = 2\\
		0 \qquad \qquad \qquad \quad \text{otherwise.}
		\end{cases}
		$$
		Since both spectral sequences collapse,  from \autoref{torsion_multi_sym_alg} we get 
		$$
		\EEQ = \HH_{I}^0\big(\Sym(I)\big) \cong \HH_2\big(\HH_{I}^2(\mathbb{L}_{\bullet})\big),
		$$
		and so the assertion follows.
	\end{proof}
\end{lemma}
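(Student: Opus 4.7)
The plan is to apply the local cohomology functor $\HH_I^\bullet$ to the Koszul complex $\mathbb{L}_\bullet$ of \autoref{koszul_complex_Sym}. Since $\{g_1,g_2\}$ is a regular sequence in $\AAA$, this Koszul complex is a free $\AAA$-resolution of $\Sym(I)$. Moreover, $\AAA=R[y_0,y_1,y_2]$ is a polynomial extension of $R$, so each term $\mathbb{L}_p$ is $R$-flat and $\HH_I^q(\mathbb{L}_p)$ is obtained by tensoring $\HH_I^q(R)$ with the corresponding free $\AAA$-module. By \autoref{lem_prop_I_sat}$(i)$, $\HH_I^q(R)=0$ for $q\neq 2$, so $\HH_I^q(\mathbb{L}_p)$ vanishes outside the single row $q=2$, and on that row it yields precisely the shifts of $\HH_I^2(\AAA)$ appearing in the statement.

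To extract the torsion $\EEQ=\HH_I^0(\Sym(I))$ (the identification being provided by \autoref{torsion_multi_sym_alg}), I would form the double complex $\mathbb{L}_\bullet\otimes_R C_I^\bullet$, where $C_I^\bullet$ denotes the \v{C}ech complex on a generating set of $I$, and compare its two spectral sequences. One filtration exploits the exactness of $\mathbb{L}_\bullet$ outside homological degree zero, so that the corresponding spectral sequence collapses on a single column and converges to $\HH_I^\bullet(\Sym(I))$. The other filtration is controlled by $\HH_I^q(\mathbb{L}_p)$, which by the previous paragraph is concentrated on the single row $q=2$; it therefore degenerates at the $E_2$ page to the homology of the complex $\HH_I^2(\mathbb{L}_\bullet)$.

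Comparing the two descriptions in total degree zero yields $\EEQ\cong H_2\bigl(\HH_I^2(\mathbb{L}_\bullet)\bigr)$. Since the top homology of a three-term complex is the kernel of its leftmost differential, this is exactly the exact sequence asserted. The main bookkeeping obstacle I anticipate is tracking the multi-degree shifts through the Koszul differentials and verifying that the induced differential on $\HH_I^2(\AAA)$ is literally the column $(-g_2,g_1)^t$; once the twists from \autoref{koszul_complex_Sym} are transported through the spectral sequence, this identification follows by functoriality of $\HH_I^\bullet$ applied to the defining map of $\mathbb{L}_\bullet$.
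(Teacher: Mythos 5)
Your proposal is correct and follows essentially the same argument as the paper: form the double complex $\mathbb{L}_\bullet \otimes_R C_I^\bullet$, note via \autoref{lem_prop_I_sat}$(i)$ that $\HH_I^q(\mathbb{L}_p)$ is concentrated in the row $q=2$, compare the two spectral sequences, and identify $\EEQ=\HH_I^0(\Sym(I))$ (using \autoref{torsion_multi_sym_alg}) with $\HH_2\big(\HH_I^2(\mathbb{L}_\bullet)\big)$, i.e.\ the kernel of the leftmost differential. Your extra remark that $\HH_I^q(\mathbb{L}_p)\cong \HH_I^q(R)\otimes_R \mathbb{L}_p$ by flatness is exactly the (implicit) justification the paper relies on.
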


\begin{notation}
For  $z=x_i$ or $z=y_j$ and $F \in \AAA,$ we denote with $\deg_z(F)$ the maximal degree of the monomials of $F$ in terms of $z$.
\end{notation}

Using the presentation matrix $\varphi$ of $I$, we characterize when $I$ is of linear type.

\begin{lemma}
	\label{linear_type_characterization_sat_case}
	Assume that $\dim(R/I)=1$ and $I$ is saturated.
	Then, $I$ is of linear type if and only if $I_1(\varphi)$ is an $\mm$-primary ideal.
	\begin{proof}				
		Using \autoref{nota_Hilbert_Burch},  we have that $g_1 = a_{0,1}y_0+a_{1,1}y_1+a_{2,1}y_2$ and $g_2 = a_{0,2}y_0+a_{1,2}y_1+a_{2,2}y_2$.
		
		$(\Rightarrow)$
		Let us assume that $I_1(\varphi)$ is not $\mm$-primary.
		Then, we have that $I_1(\varphi) \supset I_2(\varphi)=I$ and $\HT(I_1(\varphi))=\HT(I)=2$.
		So the minimal primes of $I_1(\varphi)$ are contained in the set of associated primes of $I$.
		In particular, there exists some $\pp \in \Ass_R(R/I)$ with $I_1(\varphi) \subset \pp$.
		From \autoref{lem_prop_I_sat}$(ii)$ we have that $\pp \in \Ass_R\left(\HH_{I}^2(R) \right)$, and this implies the existence of an element $0\neq v \in \HH_{I}^2(R)$ that is annihilated by $I_1(\varphi)$.
		Considering $v$ as an element in $\HH_{I}^2(\AAA)$, we get $g_1\cdot v=g_2\cdot v=0$.
		Therefore, from \autoref{description_torsion_Sym} we obtain $\EEQ\neq0$.
		
		$(\Leftarrow)$
		Here we suppose that $I_1(\varphi)$ is $\mm$-primary.
		By contradiction, we assume  $\EEQ\neq0,$ and choose $0\neq w \in \EEQ$.
		Since  $\HH_{I}^2(\AAA) \cong \HH_{I}^2(R) \otimes_{\kk} S$,   $w$ can be written as $w = \sum_{i=1}^l v_i \otimes_{\kk} m_i$ where $v_i \in \HH_{I}^2(R)$ and $m_i$ is a monomial in $S$.
		For each $0 \le j \le 2$, we  have a unique decomposition
		$$
		w = w_j + w_j^*,
		$$
		where $w_j\neq 0$ is obtained by adding all the terms $v_i\otimes_{\kk} m_i$ such that the value of $\deg_{y_j}(m_i)$ is maximal.
		From the condition $g_1\cdot w=g_2\cdot w=0$,  we automatically get that $a_{j,1}\cdot w_j=a_{j,2}\cdot w_j = 0$.
		Therefore, we have obtained that $I_1(\varphi)$ is composed of zero divisors in $\HH_{I}^2(\AAA)$.
		By the isomorphism $\HH_{I}^2(\AAA) \cong \HH_{I}^2(R) \otimes_{\kk} S$ and  \autoref{lem_prop_I_sat}$(ii)$,  we have that $\Ass_R\left(\HH_{I}^2(\AAA)\right)=\Ass_R\left(\HH_{I}^2(R)\right)=\Ass_R(R/I)$.
		Finally, the prime avoidance lemma implies that $I_1(\varphi) \subset \pp$ for some $\pp \in \Ass_R(R/I)$,
		and this contradicts the fact that $I$ is saturated.
	\end{proof}
\end{lemma}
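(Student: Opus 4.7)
The plan is to reduce the linear type property to a vanishing statement about the torsion module $\EEQ$ of \autoref{canonical_map_Sym_Rees} and then exploit the explicit description of $\EEQ$ provided by \autoref{description_torsion_Sym}. Recall that $I$ is of linear type precisely when $\Sym(I) \to \Rees(I)$ is an isomorphism, equivalently when $\EEQ = 0$. Together with the isomorphism $\HH_I^2(\AAA) \cong \HH_I^2(R) \otimes_{\kk} S$, \autoref{description_torsion_Sym} tells us that $\EEQ$ consists of elements of $\HH_I^2(R)\otimes_\kk S$ annihilated by both $g_1$ and $g_2$, which are $S$-linear forms whose coefficients are exactly the entries of $\varphi$.

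For the forward direction, I would prove the contrapositive. Suppose $I_1(\varphi)$ is not $\mm$-primary. Since $I = I_2(\varphi) \subset I_1(\varphi)$, and the first determinantal ideal always has $\HT(I_1(\varphi))\le \HT(I) = 2$, one concludes that $I_1(\varphi)$ is contained in some prime $\pp$ of height $2$ that lies in $\Ass_R(R/I)$. By \autoref{lem_prop_I_sat}$(ii)$ this $\pp$ is also associated to $\HH_I^2(R)$, so there exists a nonzero element $v \in \HH_I^2(R)$ killed by $\pp$, hence by $I_1(\varphi)$. Viewing $v$ inside $\HH_I^2(\AAA)$, both $g_1\cdot v$ and $g_2\cdot v$ vanish, producing a nonzero element of $\EEQ$.

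For the reverse direction, I would assume $I_1(\varphi)$ is $\mm$-primary and argue by contradiction that $\EEQ \neq 0$ cannot occur. Picking $0\neq w \in \EEQ$, expand $w = \sum v_i \otimes m_i$ with $v_i\in \HH_I^2(R)$ and $m_i$ monomials in $\yy$. The key trick is to separate, for each $j \in \{0,1,2\}$, the leading part $w_j$ of $w$ with respect to the degree in $y_j$; then the condition $g_1 w = g_2 w = 0$ forces $a_{j,1}\cdot w_j = a_{j,2}\cdot w_j = 0$. Hence every entry of $\varphi$ is a zero divisor on $\HH_I^2(\AAA)$, and prime avoidance together with $\Ass_R(\HH_I^2(\AAA)) = \Ass_R(R/I)$ from \autoref{lem_prop_I_sat}$(ii)$ places $I_1(\varphi)$ inside some associated prime $\pp$ of $R/I$. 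Since $I$ is saturated, $\mm \notin \Ass_R(R/I)$, so $\pp \neq \mm$, contradicting that $I_1(\varphi)$ is $\mm$-primary.

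The main delicate point will be the leading-term argument in the reverse direction: one must justify carefully that selecting the part of $w$ of maximal $y_j$-degree produces a nonzero element annihilated by both $a_{j,1}$ and $a_{j,2}$, since the $g_\ell$'s mix contributions from different monomials. This is a clean computation once the grading is tracked, but it is where all the weight of the argument sits; the rest of the proof is essentially a bookkeeping of associated primes.
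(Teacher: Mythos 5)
Your proposal is correct and follows essentially the same route as the paper: both directions reduce to $\EEQ=0$ via \autoref{description_torsion_Sym}, the forward direction uses $\Ass_R(R/I)=\Ass_R(\HH_I^2(R))$ from \autoref{lem_prop_I_sat}$(ii)$ to produce a nonzero torsion element, and the reverse direction uses exactly the same leading-term-in-$y_j$ trick plus prime avoidance and saturatedness. One small caveat: your justification that ``the first determinantal ideal always has $\HT(I_1(\varphi))\le \HT(I)=2$'' is false as a general statement (e.g.\ $I_1(\varphi)$ can be $\mm$-primary while $\HT(I)=2$, which is precisely the interesting case); what you actually need, and what the paper uses, is that under the contrapositive hypothesis a non-$\mm$-primary homogeneous ideal of $\kk[x_0,x_1,x_2]$ containing $I$ has height exactly $2$, so its minimal primes are among the minimal (hence associated) primes of $I$.
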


\begin{remark}
	As in \cite{Hassanzadeh_Simis_Cremona_Sat}, an alternative proof of \autoref{linear_type_characterization_sat_case} can be obtained from either \cite[Section 5]{HSV_TRENTO_SCHOOL} or \cite[Proposition 3.7]{Simis_Vasc_Syz_Conormal_Mod}.
	Indeed,
	one can note that $I$ is  locally a complete intersection at its minimal primes if and only if $I_1(\varphi)$ is an $\mm$-primary ideal.
	Therefore, the result follows from the fact that $I$ is an almost complete intersection.
\end{remark}

\subsection{An effective birationality criterion in the case $\mu_1=1$}

In this subsection, we focus on computing the defining equations of the Rees algebra in the case $\mu_1=1$ (\autoref{nota_Hilbert_Burch}).
As a corollary of this computation, we obtain a simple characterization of birationality in the particular case $\mu_1=1$ (\autoref{thm:birat-mu1=1}) by means of the Jacobian dual criterion (see \autoref{sec:jacdualcriterion}, but also \cite{AB_INITIO}). Our proof is inspired by the method used in \cite{COX_REES_MU_1}. We shall see that it is enough to treat the following special case.

\begin{notation}
	\label{notation_case_mu_1_x0x1}
	Assume that $\dim(R/I)=1$ and $I$ is saturated. 
	Suppose that the presentation matrix in \autoref{Hilbert_Burch_presentation} is of the form
	$$
	\varphi = \left(
	\begin{array}{cc}
	x_0 & p_0 \\
	-x_1 & p_1 \\
	0 & p_2
	\end{array}
	\right).
	$$
	Here we have that $g_1=x_0y_0-x_1y_1$ and $g_2=p_0y_0+p_1y_1+p_2y_2$.
\end{notation}

We now give a version of \autoref{description_torsion_Sym}  that uses the more amenable ideal $(x_0,x_1)$ as the support of the local cohomology modules.

\begin{lemma}
	\label{lem_mu_1_gens_local_cohom}
	Using \autoref{notation_case_mu_1_x0x1}, the following statements hold:
	\begin{enumerate}[(i)]
		\item $\EEQ=\HH_{(x_0,x_1)}^0\left(\Sym(I)\right)$.
		\item The torsion submodule $\EEQ$ is determined by the following exact sequence
		$$
		0 \rightarrow \EEQ \rightarrow \HH_{\xy}^2\big(\AAA\big)(-d,-2) \xrightarrow{\tiny\left(\begin{array}{c}
			-g_2\\
			g_1
			\end{array}\right)} \HH_{\xy}^2\big(\AAA\big)(-1,-1) \;\oplus\; \HH_{\xy}^2\big(\AAA\big)(-d+1,-1).
		$$
		\item Via this identification, we have that $\EEQ$ is generated by 
		\begin{equation*}
		\EEQ \;\cong\;  \AAA \cdot \Big\{ w_n   \;\mid\;  0 \le n \le d-2 \;\text{ and }\; g_2 \cdot w_n = 0 \Big\}				
		\end{equation*}
	\end{enumerate}
	where each $w_n$ is of the form
	$$
	w_n = \sum_{i=0}^{d-2-n}  \frac{1}{x_0^{i+1}x_1^{d-1-n-i}}y_0^{d-2-n-i}y_1^i \;\in\;  {\left[\HH_{\xy}^2(\AAA)(0,-2)\right]}_{n-d}.
	$$
	\begin{proof}
		$(i)$ From \autoref{description_torsion_Sym} we have that any  $F \in \EEQ$ can be written as
		$
		F = \sum_{i=1}^l a_i  \yy^{\gamma_i},
		$
		where each $a_i \in \HH_{I}^2(R)$, and satisfies $g_1\cdot F=g_2\cdot F=0$.
		Since $g_1=x_0y_0-x_1y_1$, we can conclude that there exists some $u>0$ such that $x_0^uF=x_1^uF=0$.
		From the fact that $I \subset (x_0,x_1)$, we get a neater description of $\EEQ$ given by 
		$$
		\EEQ = \HH_{\xy}^0\left(\EEQ\right) = \HH_{\xy}^0\left(\HH_{I}^0\left(\Sym(I)\right)\right) = \HH_{\xy}^0\left(\Sym(I)\right).
		$$
		
		$(ii)$ To obtain the required exact sequence we follow the same arguments as in the proof of \autoref{description_torsion_Sym}.
		We consider the double complex $\mathbb{L}_{\bullet} \otimes_{R} C_{\xy}^{\bullet}$, where $\mathbb{L}_{\bullet}$ is the Koszul complex of \autoref{koszul_complex_Sym}.
		Examining the spectral sequences corresponding to the first and second filtrations of $\mathbb{L}_{\bullet} \otimes_{R} C_{\xy}^{\bullet}$, we obtain
		\begin{equation*}
		\EEQ = \HH_{\xy}^0(\Sym(I)) \cong  \HH_2\big(\HH_{\xy}^2(\mathbb{L}_{\bullet})\big).
		\end{equation*}
		From this isomorphism we get the claimed exact sequence.
		
		$(iii)$ First we note that $\HH_{\xy}^2(\AAA) \cong \frac{1}{x_0x_1}\kk[x_0^{-1},x_1^{-1},x_2,y_0,y_1,y_2]$.
		In this part, we describe a set of generators of the kernel of the multiplication map
		\begin{equation}
		\label{mult_map_g_1}
		\HH_{\xy}^2\big(\AAA\big)(-d,-2) \xrightarrow{g_1} \HH_{\xy}^2\big(\AAA\big)(-d+1,-1).
		\end{equation}
		Using that $g_1=x_0y_0-x_1y_1$ does not depend on the variables $x_2$ and $y_2$, then a set of generators of the kernel of this map is given by just considering elements inside the subring $\frac{1}{x_0x_1}\kk[x_0^{-1},x_1^{-1},y_0,y_1]$.
		Let $F \in \frac{1}{x_0x_1}\kk[x_0^{-1},x_1^{-1},y_0,y_1]$, then we expand it as follows:
		$$
		F = \sum_{i=l}^m F_i y_0^{\beta_i}y_1^i
		$$
		where each $F_i \in \frac{1}{x_0x_1}\kk[x_0^{-1},x_1^{-1}]$.
		The condition $(x_0y_0-x_1y_1) F=0$ gives the relations 
		$$
		x_0F_ly_0^{\beta_l+1}y_1^l = 0, \;\; x_1F_my_0^{\beta_m}y_1^{m+1}=0, \quad \text{ and } \;\; \left(x_0F_iy_0^{\beta_i+1} - x_1F_{i-1}y_0^{\beta_{i-1}}\right)y_1^i =0 \;\;\text{ for }\;\; l+1 \le i \le m.
		$$
		We can easily conclude that a set of generators of the kernel of \autoref{mult_map_g_1} is given by elements of the form
		$$
		\frac{1}{x_0x_1^{m+1}}y_0^m \;+\; 		\frac{1}{x_0^2x_1^{m}}y_0^{m-1}y_1 \;+\; \cdots \;+\;  \frac{1}{x_0^{m+1}x_1}y_1^m 
		$$
		where $m \ge 0$.
		Therefore, to conclude we only need to take into account the shifting of $-d$ in the grading part corresponding with $R$, and intersect with the elements that are also annihilated by the other equation $g_2$.
	\end{proof}	
\end{lemma}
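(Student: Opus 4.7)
For part $(i)$, the inclusion $\HH_{(x_0,x_1)}^0(\Sym(I)) \subset \HH_I^0(\Sym(I)) = \EEQ$ is immediate from $I \subset (x_0,x_1)$, so only the reverse inclusion requires work. Given $F \in \EEQ$, I would lift it via the exact sequence of \autoref{description_torsion_Sym} to a finite sum $\widetilde F = \sum_\gamma b_\gamma \yy^\gamma \in \HH_I^2(\AAA)$ with coefficients $b_\gamma \in \HH_I^2(R)$, satisfying $g_1 \widetilde F = 0$. Since $g_1 = x_0y_0 - x_1y_1$, matching the coefficients of each monomial $\yy^\delta$ in $(x_0y_0 - x_1y_1)\widetilde F = 0$ gives the relations $x_0 b_{\delta - e_0} = x_1 b_{\delta - e_1}$, together with boundary equations $x_0 b_\gamma = 0$ at the top $y_0$-degree and $x_1 b_\gamma = 0$ at the top $y_1$-degree. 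Iterating these boundary relations along the chain of monomials and using the finiteness of the support of $\widetilde F$ produces an integer $u$ for which $x_0^u \widetilde F = x_1^u \widetilde F = 0$, so $F \in \HH^0_{(x_0,x_1)}(\Sym(I))$.

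For part $(ii)$, I would repeat the spectral sequence argument of \autoref{description_torsion_Sym} verbatim, replacing the \v{C}ech complex $C_I^\bullet$ by $C_{(x_0,x_1)}^\bullet$. The essential input is the vanishing $\HH_{\xy}^j(R) = 0$ for $j \neq 2$, which holds because $\{x_0,x_1\}$ is a regular sequence in $R$ (so $\HH_{\xy}^{<2} = 0$) and $\operatorname{ara}(x_0,x_1) = 2$ (so $\HH_{\xy}^{>2} = 0$). Running the first filtration on $\mathbb{L}_\bullet \otimes_R C_{\xy}^\bullet$ leaves only the row $\HH_{\xy}^2(\mathbb{L}_\bullet)$ nonzero on page one, while the second filtration collapses to $\HH_{\xy}^0(\Sym(I)) = \EEQ$ by part $(i)$. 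Identifying $\HH_2\big(\HH_{\xy}^2(\mathbb{L}_\bullet)\big)$ with the kernel of the indicated multiplication map yields the exact sequence.

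For part $(iii)$, the task reduces to describing the kernel of multiplication by $g_1$ on $\HH_{\xy}^2(\AAA) \cong \tfrac{1}{x_0x_1}\kk[x_0^{-1},x_1^{-1},x_2,y_0,y_1,y_2]$ and then intersecting with the $g_2$-annihilator. Since $g_1 = x_0y_0 - x_1y_1$ does not involve $x_2$ or $y_2$, the kernel decomposes as the tensor product of $\kk[x_2,y_2]$ with the kernel inside the subring $\tfrac{1}{x_0x_1}\kk[x_0^{-1},x_1^{-1},y_0,y_1]$. Expanding $F = \sum_i F_i(x_0^{-1},x_1^{-1})\,y_0^{\beta_i}y_1^{i}$ and imposing $(x_0y_0 - x_1y_1)F = 0$ produces a straightforward recursion on the $F_i$ (with boundary conditions at the minimal and maximal $y_1$-indices) whose solutions are exactly the elements $\frac{1}{x_0 x_1^{m+1}}y_0^m + \frac{1}{x_0^2 x_1^m}y_0^{m-1}y_1 + \cdots + \frac{1}{x_0^{m+1}x_1}y_1^m$ for $m \geq 0$. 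Translating the shift $(-d,-2)$ into the parametrization $m = d-2-n$ yields the stated $w_n$ for $0 \leq n \leq d-2$, and intersecting with $\ker(g_2 \cdot)$ gives the description of $\EEQ$ as the $\AAA$-module generated by those $w_n$ with $g_2 w_n = 0$.

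The conceptual content is contained in parts $(i)$ and $(ii)$; the main technical obstacle is the bookkeeping in part $(iii)$, particularly the compatibility of the various degree shifts and the verification that the recursion recovered from matching $y_0,y_1$-coefficients exhausts the kernel of $g_1$ on the shifted local cohomology module.
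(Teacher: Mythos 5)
Your proposal follows the paper's own proof essentially step for step: part (i) via the embedding from \autoref{description_torsion_Sym} and coefficient-chasing against $g_1=x_0y_0-x_1y_1$ to get annihilation by powers of $x_0,x_1$, part (ii) by rerunning the spectral-sequence argument for $\mathbb{L}_\bullet\otimes_R C_{\xy}^\bullet$ using that $\HH_{\xy}^j(R)$ is nonzero only for $j=2$, and part (iii) by the explicit description of $\ker(g_1)$ in $\frac{1}{x_0x_1}\kk[x_0^{-1},x_1^{-1},y_0,y_1]$ followed by the degree shift and intersection with the $g_2$-annihilator. It is correct, and where it differs it only makes explicit details (the vanishing of $\HH_{\xy}^{j\neq 2}(R)$, the iteration producing the exponent $u$) that the paper leaves implicit.
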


Now, we describe the process of computing the so-called Sylvester forms that have been successfully used in several papers like \cite{HONG_SIMIS_VASC_ELIM, COX_REES_MU_1, Hassanzadeh_Simis_Cremona_Sat}.

\begin{algorithm}
	\label{const_Sylvester_forms}
	Using \autoref{notation_case_mu_1_x0x1}, we compute iteratively the set of forms  
	$
	\Sylv,
	$
	as follows.
	\begin{enumerate}[(I)]
		\item Set $i = 0$, $F_0 = g_2$ and $\Sylv=\emptyset$. 
		\item While $F_i \in (x_0,x_1)$ we perform the following steps:
		\begin{enumerate}[(a)]
			\item Write $F_i$ in the convenient form 
			$
			F_i = {(F_i)}_{x_0}x_0 + {(F_i)}_{x_1}x_1
			$
			to get the equation 
			\begin{equation*}
			\left(\begin{array}{c}
			g_1\\
			F_i
			\end{array}\right)
			= \left(\begin{array}{cc}
			y_0 & -y_1 \\
			{(F_i)}_{x_0} & {(F_i)}_{x_1}
			\end{array}
			\right)
			\left(\begin{array}{c}
			x_0\\
			x_1
			\end{array}\right).					
			\end{equation*}
			Then, the $(i+1)$-th Sylvester form is computed with the determinant
			$$
			F_{i+1} = \det\left(\begin{array}{cc}
			y_0 & -y_1 \\
			{(F_i)}_{x_0} & {(F_i)}_{x_1}
			\end{array}
			\right).
			$$
			\item Set $\Sylv=\Sylv \cup \{F_{i+1}\}$.
			\item Set $i = i+1$.
		\end{enumerate}			
		\item Set $m=i$ and return the set of computed forms
		$
		\Sylv = \{F_1, \ldots, F_m\}.
		$
	\end{enumerate}
	We emphasize for later use that  $\bideg(F_i)=\left(d-1-i,i+1\right)$ for each $0\le i \le m$.
\end{algorithm}

The next lemma relates the torsion of the symmetric algebra with the Sylvester forms.

\begin{lemma}
	\label{lem_torsion_Sym_Sylvester_forms}
	In \autoref{const_Sylvester_forms}, for each $1\le i\le m$ the following statements hold:
	\begin{enumerate}[(i)]
		\item $\{g_1, F_i\}$ is a regular sequence.
		\item There is an isomorphism 
		\begin{equation*}
		{\Big[\big( 0 \;{:}_{\Sym(I)}\; {(x_0,x_1)}^i\big)\Big]}_{d-1-i} \cong {\left[ \frac{(g_1,F_i)}{(g	_1)}\right]}_{d-1-i}.
		\end{equation*}
	\end{enumerate}
	\begin{proof}
		The proof is obtained by induction on $i$.
		
		Let $i=1$. Since $\{x_0,x_1\}$ and $\{g_1,g_2\}$ are regular sequences,  from Wiebe's lemma (see e.g.~\cite[Proposition 3.8.1.6]{JOUANOLOU_INVARIANT_ELIM}) 
		we get the following exact sequence
		\begin{equation}
		\label{first_exact_seq_Wiebe}
		0 \rightarrow \AAA/(x_0,x_1) \xrightarrow{F_1} \AAA/(g_1,g_2) \xrightarrow{\tiny\left(\begin{array}{c}
			x_0\\
			x_1
			\end{array}\right)} {\left[\AAA/(g_1,g_2)\right]}^2,
		\end{equation}
		where $F_1$ is the first Sylvester form.
		Thus we have $F_1 \not\in (g_1,g_2)$, and since $g_1$ is an irreducible polynomial,  we get that $\{g_1,F_1\}$ is a regular sequence.
		From the fact that $\bideg(g_2)=(d-1,1)$, for any $v \in \AAA$ with $\deg_{\xx}(v) \le d-2$ the exact sequence \autoref{first_exact_seq_Wiebe} gives the following equivalences
		$$
		v \in \big((g_1,g_2) : (x_0,x_1)\big) \Longleftrightarrow v \in \left(g_1,g_2,F_1\right) \Longleftrightarrow v \in \left(g_1,F_1\right).
		$$
		In other words, we obtain the isomorphisms 
		$$
		{\Big[\big( 0 \;{:}_{\Sym(I)}\; (x_0,x_1)\big)\Big]}_{d-2} \cong {\left[ \frac{(g_1,g_2,F_1)}{(g_1,g_2)}\right]}_{d-2} \cong {\left[ \frac{(g_1,F_1)}{(g_1)}\right]}_{d-2}.
		$$
		Therefore, both conditions hold for $i=1$. 
		
		Let $2\le i \le m$. By induction we assume that conditions $(i)$ and $(ii)$ are satisfied for $i-1$.
		Again, from Weibe's lemma we get the exact sequence 
		\begin{equation}
		\label{second_exact_seq_Wiebe}
		0 \rightarrow \AAA/(x_0,x_1) \xrightarrow{F_{i}} \AAA/(g_1,F_{i-1}) \xrightarrow{\tiny\left(\begin{array}{c}
			x_0\\
			x_1
			\end{array}\right)} {\left[\AAA/(g_1,F_{i-1})\right]}^2,
		\end{equation}
		where $F_i$ is the $i$-th Sylvester form. 
		By the same previous argument, it is clear that $(g_1,F_i)$ is a regular sequence.
		Using the exactness of \autoref{second_exact_seq_Wiebe} and similar degree considerations, we have that 
		$$
		v \in \big((g_1,F_{i-1}) : (x_0,x_1)\big) \Longleftrightarrow v \in \left(g_1,F_{i-1},F_{i}\right) \Longleftrightarrow v \in \left(g_1,F_{i}\right)
		$$
		for any $v \in \AAA$ with $\deg_{\xx}(v)\le d-1-i$.
		Thus, we also have the isomorphisms
		$$
		{\left[\frac{\big((g_1,F_{i-1}) : (x_0,x_1)\big)}{(g_1,F_{i-1})}\right]}_{d-1-i} \cong {\left[ \frac{(g_1,F_{i-1},F_{i})}{(g_1,F_{i-1})}\right]}_{d-1-i} \cong {\left[ \frac{(g_1,F_{i})}{(g_1)}\right]}_{d-1-i}.
		$$
		Since $\deg_{\xx}(F_{i-1})=d-1-(i-1)$ and ${\left[\Sym(I)\right]}_{\le d-2} \cong {\left[\AAA/(g_1)\right]}_{\le d-2}$,  we get
		$$
		{\left[\frac{\big((g_1,F_{i-1}) : (x_0,x_1)\big)}{(g_1,F_{i-1})}\right]}_{d-1-i} \cong 		{\left[\left(\frac{(g_1,F_{i-1})}{(g_1)} \;{:}_{\Sym(I)}\; (x_0,x_1) \right)\right]}_{d-1-i}. 
		$$
		From the inductive hypothesis we already have
		\begin{equation*}
		{\Big[\big( 0 \;{:}_{\Sym(I)}\; {(x_0,x_1)}^{i-1}\big)\Big]}_{d-1-(i-1)} \cong {\left[ \frac{(g_1,F_{i-1})}{(g	_1)}\right]}_{d-1-(i-1)}.
		\end{equation*}
		By assembling these isomorphisms we conclude that the condition $(ii)$ 
		\begin{equation*}
		{\Big[\big( 0 \;{:}_{\Sym(I)}\; {(x_0,x_1)}^{i}\big)\Big]}_{d-1-i} \cong {\left[ \frac{(g_1,F_{i})}{(g_1)}\right]}_{d-1-i}
		\end{equation*}
		also holds for the form $F_{i}$.
		Therefore, we have that both conditions are satisfied for all the Sylvester forms.	
	\end{proof}
\end{lemma}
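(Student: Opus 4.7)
My plan is to establish both $(i)$ and $(ii)$ simultaneously by induction on $i$, with the main technical tool being Wiebe's lemma (see e.g.~\cite[Proposition 3.8.1.6]{JOUANOLOU_INVARIANT_ELIM}): given regular sequences $\{g,h\}$ and $\{x,y\}$ in a Noetherian ring $\AAA$ together with a matrix relation $(g,h)^T = M(x,y)^T$, one obtains an exact sequence
$$0 \to \AAA/(x,y) \xrightarrow{\det M} \AAA/(g,h) \xrightarrow{(x,y)^T} \left(\AAA/(g,h)\right)^2.$$
This single tool delivers both the non-zerodivisor property needed for $(i)$ and the precise description of the colon ideal needed for $(ii)$, so the whole proof reduces to iterating it along the chain of Sylvester forms.

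For the base case $i=1$, the regularity of $\{x_0,x_1\}$ in $\AAA$ is immediate, and that of $\{g_1,g_2\}$ was recalled in the text via \cite[Corollary 2.2]{SIMIS_VASC_SYM_INT}. Wiebe's lemma applied to these sequences produces the exact sequence featuring $F_1=\det M_1$, which forces $F_1\notin (g_1,g_2)$; since $\AAA/(g_1)$ is a domain and the image of $F_1$ there is nonzero, $\{g_1,F_1\}$ is a regular sequence. For $(ii)$ at $i=1$, the exact sequence yields $((g_1,g_2):(x_0,x_1))=(g_1,g_2,F_1)$, and a bidegree count forces the equality $(g_1,g_2,F_1)_{d-2}=(g_1,F_1)_{d-2}$: any bihomogeneous presentation $ag_1+bg_2+cF_1$ of a class of $x$-degree $d-2$ must satisfy $b=0$, since $g_2$ has $x$-degree $d-1$ and no element of $\AAA$ carries negative $x$-degree. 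Reducing modulo $(g_1)$ then gives the required isomorphism.

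The inductive step is structurally identical. Using $(i)$ at stage $i-1$, Wiebe's lemma applied to the regular sequences $\{g_1,F_{i-1}\}$ and $\{x_0,x_1\}$ produces $F_i=\det M_i$ together with its exact sequence; this yields $F_i\notin(g_1)$, giving $(i)$ at stage $i$, and also the colon identity $((g_1,F_{i-1}):(x_0,x_1)) = (g_1,F_{i-1},F_i)$. The bidegree count collapses this to $(g_1,F_i)_{d-1-i}$ in $x$-degree $d-1-i$, because $F_{i-1}$ sits in $x$-degree $d-i$ and again no negative $x$-degrees exist. Combining this with the colon identity $(0 :_{\Sym(I)}(x_0,x_1)^i) = ((0 :_{\Sym(I)}(x_0,x_1)^{i-1}):(x_0,x_1))$ and feeding the inductive hypothesis for $(ii)$ at $x$-degree $d-i$ into the inner colon, the desired isomorphism at $x$-degree $d-1-i$ follows. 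The main obstacle I anticipate is keeping the graded bookkeeping consistent across the nested colons and the lifts from $\Sym(I)$ to $\AAA$; the bidegree constraints on $g_2$ and on each $F_j$ are precisely what permit the simplification $(g_1,F_{j-1},F_j) \leftrightarrow (g_1,F_j)$ on the relevant graded strand, so the entire argument hinges on arranging these degree truncations at each stage with care.
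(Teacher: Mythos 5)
Your proposal is correct and follows essentially the same route as the paper's own proof: induction on $i$, with Wiebe's lemma applied to the regular sequences $\{x_0,x_1\}$ and $\{g_1,F_{i-1}\}$ (resp.\ $\{g_1,g_2\}$ in the base case) to produce $F_i$, irreducibility of $g_1$ to get the regular sequence in $(i)$, and bidegree truncations (using $\deg_{\xx}(g_2)=d-1$ and $\deg_{\xx}(F_{i-1})=d-i$) to collapse the colon ideal to $(g_1,F_i)$ in the relevant strand and transfer it through $\Sym(I)\cong \AAA/(g_1,g_2)$. No substantive differences to flag.
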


The following theorem gives explicit generators for the presentation of $\Rees(I)$.
It can be seen as a natural generalization of both \cite[Theorem 2.3]{COX_REES_MU_1} and \cite[Theorem 2.7$(i)$]{Hassanzadeh_Simis_Cremona_Sat}.

\begin{theorem}
	\label{Rees_Eq_mu_1}
	Let $\Sylv$ be the set of Sylvester forms computed in \autoref{const_Sylvester_forms}.
	Then, the defining equations of $\Rees(I)$ are minimally generated by 
	$$
	\{g_1,g_2\} \;\cup\; \Sylv.
	$$
	In particular, it is minimally generated in the bi-degrees
	$$
	(1,1), (d-1,1),(d-2,2),\ldots,(d-1-m,m+1).
	$$
	\begin{proof}
		Let $e=d-1-m$.
		In \autoref{lem_torsion_Sym_Sylvester_forms}$(ii)$ we proved that 
		$$
		{\Big[\big( 0 \;{:}_{\Sym(I)}\; {(x_0,x_1)}^{m}\big)\Big]}_{e} \cong {\left[ \frac{(g_1,F_{m})}{(g_1)}\right]}_{e},
		$$
		which implies that for any $j>0$ we have
		\begin{equation}
		\label{vanishing_torsion_Sym}
		{\Big[\big( 0 \;{:}_{\Sym(I)}\; {(x_0,x_1)}^{m+j}\big)\Big]}_{e-j} \cong 	{\left[\left(\frac{(g_1,F_{m})}{(g_1)} \;{:}_{\Sym(I)}\; {(x_0,x_1)}^{j} \right)\right]}_{e-j}.
		\end{equation}
		Since $F_{m} \not\in (x_0,x_1)$ and $g_1 \in (x_0,x_1)$, we deduce that the term on the right is always equal to zero. 
		
		From \autoref{lem_mu_1_gens_local_cohom}$(iii)$, a set of generators for $\EEQ$ is given by elements of the form
		$$
		w_n = \sum_{i=0}^{d-2-n}  \frac{1}{x_0^{i+1}x_1^{d-1-n-i}}y_0^{d-2-n-i}y_1^i \;\in\;  {\left[\HH_{\xy}^2(\AAA)(0,-2)\right]}_{n-d}.
		$$
		Hence, for any $j > 0$ we have that ${(x_0,x_1)}^{m+j} \cdot w_{e-j}=0$.
		The vanishing of the equation \autoref{vanishing_torsion_Sym} implies that $w_{e-j} \not\in \EEQ$ for all $j > 0$.
		Therefore, the elements $w_{d-2},w_{d-1},\ldots,w_{e}$ generate $\EEQ$.
		Using the isomorphisms of \autoref{lem_mu_1_gens_local_cohom}$(iii)$  and \autoref{lem_torsion_Sym_Sylvester_forms}$(ii)$,  we identify $w_{d-1-i}$ as a multiple of $F_i$, and this implies that $F_1,F_2,\ldots,F_m$ is also a set of generators of $\EEQ$.
		Finally, simple degree considerations yield that $\{g_1,g_2,F_1,F_2,\ldots,F_m\}$ is a minimal set of generators.
	\end{proof}
\end{theorem}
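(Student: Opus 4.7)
The plan is to identify the torsion submodule $\EEQ = \HH^0_{(x_0,x_1)}(\Sym(I))$ explicitly with the $\AAA$-span of the Sylvester forms, thereby extracting all defining equations of $\Rees(I) = \Sym(I)/\EEQ$ beyond $g_1, g_2$. First I would combine the two preceding lemmas: the local-cohomology description produces generators of $\EEQ$ of the shape $w_n \in [\HH^2_{(x_0,x_1)}(\AAA)(0,-2)]_{n-d}$ with $g_2 \cdot w_n = 0$, while the colon-isomorphism
\[
\bigl[0 :_{\Sym(I)} (x_0,x_1)^i\bigr]_{d-1-i} \;\cong\; \bigl[(g_1,F_i)/(g_1)\bigr]_{d-1-i}
\]
identifies each nonzero $w_{d-1-i}$ with (a nonzero scalar multiple of) the class of the Sylvester form $F_i$. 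This identification is the bridge converting the abstract $\HH^0_{(x_0,x_1)}$ description of $\EEQ$ into the explicit polynomial list $F_1, \ldots, F_m$.

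Next I would show that the Sylvester iteration captures \emph{all} of $\EEQ$. Setting $e = d - 1 - m$, the termination of the algorithm at step $m$ means exactly $F_m \notin (x_0,x_1)$. Applying a further colon by $(x_0,x_1)^j$ to the above isomorphism yields, for every $j > 0$,
\[
\bigl[0 :_{\Sym(I)} (x_0,x_1)^{m+j}\bigr]_{e-j} \;\cong\; \Bigl[\bigl((g_1,F_m)/(g_1)\bigr) :_{\Sym(I)} (x_0,x_1)^j\Bigr]_{e-j},
\]
and the right-hand side vanishes because $g_1 \in (x_0,x_1)$ while $F_m$ has a nonzero term outside $(x_0,x_1)$. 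Hence $w_{e-j} \notin \EEQ$ for every $j > 0$, so $\EEQ$ is generated precisely by $w_{d-2}, w_{d-3}, \ldots, w_e$, which under the identification above correspond to $F_1, \ldots, F_m$. Combined with the presentation $\Sym(I) = \AAA/(g_1,g_2)$, this yields the generating set $\{g_1, g_2\} \cup \Sylv$ for the defining ideal of $\Rees(I)$.

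Finally, minimality is a direct bidegree inspection: the generators sit in the pairwise distinct bidegrees $(1,1), (d-1,1), (d-2,2), \ldots, (d-1-m, m+1)$ which are strictly increasing in the $y$-component, so no form can be expressed as an $\AAA_+$-combination of the others. The principal obstacle is the second paragraph above: certifying that the Sylvester iteration truly exhausts $\EEQ$ (rather than producing only a partial family of torsion generators) requires promoting the algorithm's termination criterion $F_m \notin (x_0,x_1)$ into the vanishing of all lower-degree graded pieces of $\EEQ$, which is exactly what the iterated colon argument accomplishes.
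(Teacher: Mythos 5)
Your proposal follows essentially the same route as the paper's proof: it invokes the same two lemmas (the local-cohomology generators $w_n$ and the colon isomorphism with $(g_1,F_i)/(g_1)$), runs the identical iterated-colon argument using $F_m \notin (x_0,x_1)$ to show $w_{e-j} \notin \EEQ$ for $j>0$, identifies $w_{d-1-i}$ with a multiple of $F_i$, and finishes minimality by the same bidegree inspection. The argument is correct and matches the paper's proof step for step.
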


We are now ready for providing our birationality criterion. We notice that from \autoref{prop:degFrankA}, we have that the rational map $\FF$ is birational for $d\le2$ under our assumptions. Therefore, we only need to consider the cases $d\ge 3$. Before stating the main result we make the following point.

\begin{remark}
	\label{rem_linear_change_coordinates}
	In the presentation matrix $\varphi$ of \autoref{Hilbert_Burch_presentation}, if $\mu_1=1$ and $\HT(I_1(\varphi))=2$ then the vector space spanned by the linear forms of the first column has dimension $2$.  
	Therefore, in this case we can make a linear change of coordinates and assume that $\varphi$ is given as in \autoref{notation_case_mu_1_x0x1}.
\end{remark}

The following result covers a family of birational maps that include the classical de Jonqui\`eres maps (see e.g. \cite[\S 2.1]{Hassanzadeh_Simis_Cremona_Sat}).

\begin{theorem}\label{thm:birat-mu1=1}
	Let $\FF:\PP^2 \dashrightarrow \PP^2$ be a dominant rational map with a dimension $1$ base ideal $I$ that is saturated.
	Suppose that $\varphi$ in \autoref{Hilbert_Burch_presentation} satisfies $\mu_1=1$ and $d \ge 3$. 
	Then, $\FF$ is birational if and only if the following conditions are satisfied:
	\begin{enumerate}[(i)]
		\item $\HT(I_1(\varphi))=2$.
		\item  After the linear change of coordinates of \autoref{rem_linear_change_coordinates}, in \autoref{const_Sylvester_forms} we have $m=d-2$.             
	\end{enumerate}
	\begin{proof}
		After a linear change of coordinates the condition of birationality remains invariant. 
		From the Jacobian dual criterion (\cite[Theorem 2.18]{AB_INITIO} or \autoref{jacob_main_crit}) we have that $\FF$ is birational if and only if there is another equation of bi-degree $(1,*)$, and by \autoref{Rees_Eq_mu_1} this is equivalent to $m=d-2$.
	\end{proof}
\end{theorem}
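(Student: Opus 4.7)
The plan is to derive everything from the Jacobian dual criterion (\autoref{jacob_main_crit}). Since $X = Y = \PP^2$ and $\FF$ is dominant, we have $S = \kk[\yy]$, and the criterion reads: $\FF$ is birational if and only if the Jacobian dual matrix $\psi$, whose rows are the $\xx$-gradients of a minimal system of generators of $\REQ$ of bi-degree $(1,*)$, satisfies $\rank_S(\psi) = 2$. The strategy is to combine this with the explicit description of $\REQ$ given by \autoref{Rees_Eq_mu_1}, which presupposes the normal form of \autoref{notation_case_mu_1_x0x1}.

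For the forward direction, assume $\FF$ birational. First I would deduce $(i)$: since $I_2(\varphi) = I \subset I_1(\varphi)$ and $\HT(I) = 2$, always $\HT(I_1(\varphi)) \in \{2, 3\}$. If $\HT(I_1(\varphi)) = 3$, then \autoref{linear_type_characterization_sat_case} makes $I$ of linear type, so $\REQ = (g_1, g_2)$ with $\bideg(g_1) = (1, 1)$ and $\bideg(g_2) = (d-1, 1)$; because $d \geq 3$, the only minimal generator of $\xx$-degree $1$ is $g_1$, so $\psi$ reduces to the single row $(y_0, -y_1, 0)$ and has rank $1$, contradicting birationality. Hence $(i)$ holds and \autoref{rem_linear_change_coordinates} lets us put $\varphi$ into the form of \autoref{notation_case_mu_1_x0x1}. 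Now \autoref{Rees_Eq_mu_1} lists the minimal generators of $\REQ$ in bi-degrees $(1,1), (d-1,1), (d-2,2),\dots,(d-1-m, m+1)$; among these the ones of $\xx$-degree $1$ are exactly $g_1$ and, provided $m \geq d-2$, the Sylvester form $F_{d-2}$ of bi-degree $(1, d-1)$. Rank $2$ therefore forces $m \geq d-2$. Conversely, dominance forces $m \leq d-2$: if the algorithm produced $F_{d-1}$, of bi-degree $(0, d)$, then \autoref{Rees_Eq_mu_1} would exhibit it as a nonzero minimal generator of $\REQ$, but $F_{d-1} \in \REQ \cap \kk[\yy] = \mathfrak{b}$, which is zero since $\FF$ is dominant onto $\PP^2$. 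This yields $(ii)$.

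For the converse, assume $(i)$ and $(ii)$. After the change of coordinates, \autoref{Rees_Eq_mu_1} provides $g_1 = x_0 y_0 - x_1 y_1$ and $F_{d-2}$ as minimal generators of $\REQ$ of bi-degree $(1,*)$, so $\psi$ is a $2 \times 3$ matrix with rows $(y_0, -y_1, 0)$ and the $\xx$-gradient of $F_{d-2}$. Any nontrivial $\kk[\yy]$-linear dependence $p \cdot g_1 + q \cdot F_{d-2} = 0$ with $q \neq 0$ would, using that $g_1$ is irreducible in $\AAA$ and coprime to $q \in \kk[\yy]$, force $F_{d-2} = h(\yy) g_1 \in (g_1)$, contradicting that $F_{d-2}$ is a minimal generator. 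Hence $\rank_S(\psi) = 2$, and \autoref{jacob_main_crit} yields the birationality of $\FF$. The main technical point, which I expect to be the subtlest, is the strict upper bound $m \leq d-2$ under dominance, together with the verification that the linear change of coordinates of \autoref{rem_linear_change_coordinates} preserves both the birationality of $\FF$ and the numerical invariant $m$ produced by \autoref{const_Sylvester_forms}; the remainder is algebraic bookkeeping with the generators supplied by \autoref{Rees_Eq_mu_1}.
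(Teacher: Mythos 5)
Your proof is correct and follows essentially the same route as the paper: the multi-graded Jacobian dual criterion (\autoref{jacob_main_crit}) combined with the explicit minimal generators of the Rees ideal given by \autoref{Rees_Eq_mu_1}. You additionally spell out details the paper's one-line proof leaves implicit — the necessity of condition $(i)$ via \autoref{linear_type_characterization_sat_case}, the bound $m\le d-2$ forced by dominance, and the rank-two verification in the converse — and these are all sound; the only (immaterial) slip is asserting that in the height-three case the single Jacobian dual row is $(y_0,-y_1,0)$, which presumes a normal form not available there, though any one-row matrix has rank at most one anyway.
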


\begin{bibdiv}
\begin{biblist}

\bib{ACHILLES_MANARESI_J_MULT}{article}{
      author={Achilles, R\"udiger},
      author={Manaresi, Mirella},
       title={Multiplicity for ideals of maximal analytic spread and
  intersection theory},
        date={1993},
     journal={J. Math. Kyoto Univ.},
      volume={33},
      number={4},
       pages={1029\ndash 1046},
}

\bib{ACBook}{book}{
      author={Alberich-Carrami\~nana, Maria},
       title={Geometry of the plane cremona maps},
      series={Lecture Notes in Mathematics},
   publisher={Springer-Verlag Berlin Heidelberg},
        date={2002},
      volume={1769},
}

\bib{ATIYAH_MACDONALD}{book}{
      author={Atiyah, M.~F.},
      author={Macdonald, I.~G.},
       title={Introduction to commutative algebra},
   publisher={Addison-Wesley Publishing Co., Reading, Mass.-London-Don Mills,
  Ont.},
        date={1969},
}

\bib{BOTBOL_IMPLICIT_SURFACE}{article}{
      author={Botbol, Nicol\'as},
       title={The implicit equation of a multigraded hypersurface},
        date={2011},
     journal={J. Algebra},
      volume={348},
       pages={381\ndash 401},
}

\bib{EFFECTIVE_BIGRAD}{article}{
      author={Botbol, Nicol\'as},
      author={Bus\'e, Laurent},
      author={Chardin, Marc},
      author={Hassanzadeh, Seyed~Hamid},
      author={Simis, Aron},
      author={Tran, Quang~Hoa},
       title={Effective criteria for bigraded birational maps},
        date={2017},
     journal={J. Symbolic Comput.},
      volume={81},
       pages={69\ndash 87},
}

\bib{Brodmann_Sharp_local_cohom}{book}{
      author={Brodmann, M.~P.},
      author={Sharp, R.~Y.},
       title={Local cohomology.},
     edition={Second},
      series={Cambridge Studies in Advanced Mathematics},
   publisher={Cambridge University Press, Cambridge},
        date={2013},
      volume={136},
        note={An algebraic introduction with geometric applications},
}

\bib{BRUNS_HERZOG}{book}{
      author={Bruns, Winfried},
      author={Herzog, J\"urgen},
       title={Cohen-{M}acaulay rings},
     edition={2},
      series={Cambridge Studies in Advanced Mathematics},
   publisher={Cambridge University Press},
        date={1998},
}

\bib{LAURENT_CHARDIN_IMPLICIT}{article}{
      author={Bus\'e, Laurent},
      author={Chardin, Marc},
       title={Implicitizing rational hypersurfaces using approximation
  complexes},
        date={2005},
     journal={J. Symbolic Comput.},
      volume={40},
      number={4-5},
       pages={1150\ndash 1168},
}

\bib{Laurent_Jouanolou_Closed_Image}{article}{
      author={Bus\'e, Laurent},
      author={Jouanolou, Jean-Pierre},
       title={On the closed image of a rational map and the implicitization
  problem},
        date={2003},
     journal={J. Algebra},
      volume={265},
      number={1},
       pages={312\ndash 357},
}

\bib{Yairon}{misc}{
      author={Cid-Ruiz, Yairon},
       title={{\it MultiGradedRationalMap,} a package designed for macaulay2},
        note={Available at
  \url{http://www2.macaulay2.com/Macaulay2/doc/Macaulay2-1.15/share/doc/Macaulay2/MultiGradedRationalMap/html/}},
}

\bib{DMOD}{article}{
      author={Cid-Ruiz, Yairon},
       title={A ${D}$-module approach on the equations of the {R}ees algebra},
        date={2017},
     journal={to appear in J. Commut. Algebra},
        note={arXiv:1706.06215},
}

\bib{SAT_FIB_PERF_HT_2}{article}{
      author={Cid-Ruiz, Yairon},
       title={Multiplicity of the saturated special fiber ring of height two
  perfect ideals},
        date={2020},
     journal={Proc. Amer. Math. Soc.},
      volume={148},
      number={1},
       pages={59\ndash 70},
}

\bib{MULT_GOR_HT_3}{article}{
      author={Cid-Ruiz, Yairon},
      author={Mukundan, Vivek},
       title={Multiplicity of the saturated special fiber ring of height three
  gorenstein ideals},
        date={2019},
     journal={arXiv preprint arXiv:1909.13633},
}

\bib{COX_REES_MU_1}{article}{
      author={Cox, David},
      author={Hoffman, J.~William},
      author={Wang, Haohao},
       title={Syzygies and the {R}ees algebra},
        date={2008},
     journal={J. Pure Appl. Algebra},
      volume={212},
      number={7},
       pages={1787\ndash 1796},
}

\bib{Cutkosky_Ha_Asymp}{article}{
      author={Cutkosky, Steven~Dale},
      author={H\`a, Huy~T\`ai},
      author={Srinivasan, Hema},
      author={Theodorescu, Emanoil},
       title={Asymptotic behavior of the length of local cohomology},
        date={2005},
     journal={Canad. J. Math.},
      volume={57},
      number={6},
       pages={1178\ndash 1192},
}

\bib{dolgachev-notes}{unpublished}{
      author={Dolgachev, Igor},
       title={Lectures on cremona transformations},
        date={2016},
        note={\url{http://www.math.lsa.umich.edu/~idolga/cremonalect.pdf}},
}

\bib{AB_INITIO}{article}{
      author={Doria, A.~V.},
      author={Hassanzadeh, S.~H.},
      author={Simis, A.},
       title={A characteristic-free criterion of birationality},
        date={2012},
     journal={Adv. Math.},
      volume={230},
      number={1},
       pages={390\ndash 413},
}

\bib{EISEN_COMM}{book}{
      author={Eisenbud, David},
       title={Commutative algebra with a view towards algebraic geometry},
      series={Graduate Texts in Mathematics, 150},
   publisher={Springer-Verlag},
        date={1995},
}

\bib{EISENBUD_ULRICH_ROW_IDEALS}{article}{
      author={Eisenbud, David},
      author={Ulrich, Bernd},
       title={Row ideals and fibers of morphisms},
        date={2008},
     journal={Michigan Math. J.},
      volume={57},
       pages={261\ndash 268},
        note={Special volume in honor of Melvin Hochster},
}

\bib{FLENNER_O_CARROLL_VOGEL}{book}{
      author={Flenner, H.},
      author={O'Carroll, L.},
      author={Vogel, W.},
       title={Joins and intersections},
      series={Springer Monographs in Mathematics},
   publisher={Springer-Verlag, Berlin},
        date={1999},
}

\bib{FULTON_INTERSECTION_THEORY}{book}{
      author={Fulton, William},
       title={Intersection theory},
     edition={Second},
      series={Ergebnisse der Mathematik und ihrer Grenzgebiete. 3. Folge. A
  Series of Modern Surveys in Mathematics [Results in Mathematics and Related
  Areas. 3rd Series. A Series of Modern Surveys in Mathematics]},
   publisher={Springer-Verlag, Berlin},
        date={1998},
      volume={2},
}

\bib{GORTZ_WEDHORN}{book}{
      author={G\"{o}rtz, Ulrich},
      author={Wedhorn, Torsten},
       title={Algebraic geometry {I}},
      series={Advanced Lectures in Mathematics},
   publisher={Vieweg + Teubner, Wiesbaden},
        date={2010},
        ISBN={978-3-8348-0676-5},
         url={https://doi.org/10.1007/978-3-8348-9722-0},
        note={Schemes with examples and exercises},
}

\bib{GRADED_RINGS_I}{article}{
      author={Goto, Shiro},
      author={Watanabe, Keiichi},
       title={On graded rings. {I}},
        date={1978},
     journal={J. Math. Soc. Japan},
      volume={30},
      number={2},
       pages={179\ndash 213},
}

\bib{GRADED_RINGS_II}{article}{
      author={Goto, Shiro},
      author={Watanabe, Keiichi},
       title={On graded rings. {II}. ({${\bf Z}^{n}$}-graded rings)},
        date={1978},
     journal={Tokyo J. Math.},
      volume={1},
      number={2},
       pages={237\ndash 261},
}

\bib{MACAULAY2}{misc}{
      author={Grayson, Daniel~R.},
      author={Stillman, Michael~E.},
       title={Macaulay2, a software system for research in algebraic geometry},
        note={Available at \url{http://www.math.uiuc.edu/Macaulay2/}},
}

\bib{HACON}{article}{
      author={Hacon, Christopher~D.},
       title={Effective criteria for birational morphisms},
        date={2003},
     journal={J. London Math. Soc. (2)},
      volume={67},
      number={2},
       pages={337\ndash 348},
}

\bib{HARTSHORNE}{book}{
      author={Hartshorne, Robin},
       title={Algebraic geometry},
   publisher={Springer-Verlag, New York-Heidelberg},
        date={1977},
        note={Graduate Texts in Mathematics, No. 52},
}

\bib{Hassanzadeh_Simis_Cremona_Sat}{article}{
      author={Hassanzadeh, Seyed~Hamid},
      author={Simis, Aron},
       title={Plane {C}remona maps: saturation and regularity of the base
  ideal},
        date={2012},
     journal={J. Algebra},
      volume={371},
       pages={620\ndash 652},
}

\bib{HASSANZADEH_SIMIS_DEGREES}{article}{
      author={Hassanzadeh, Seyed~Hamid},
      author={Simis, Aron},
       title={Bounds on degrees of birational maps with arithmetically
  {C}ohen-{M}acaulay graphs},
        date={2017},
     journal={J. Algebra},
      volume={478},
       pages={220\ndash 236},
}

\bib{HSV_Approx_Complexes_I}{article}{
      author={Herzog, J.},
      author={Simis, A.},
      author={Vasconcelos, W.~V.},
       title={Approximation complexes of blowing-up rings},
        date={1982},
     journal={J. Algebra},
      volume={74},
      number={2},
       pages={466\ndash 493},
}

\bib{HSV_Approx_Complexes_II}{article}{
      author={Herzog, J.},
      author={Simis, A.},
      author={Vasconcelos, W.~V.},
       title={Approximation complexes of blowing-up rings. {II}},
        date={1983},
     journal={J. Algebra},
      volume={82},
      number={1},
       pages={53\ndash 83},
}

\bib{HSV_TRENTO_SCHOOL}{incollection}{
      author={Herzog, J.},
      author={Simis, A.},
      author={Vasconcelos, W.~V.},
       title={Koszul homology and blowing-up rings},
        date={1983},
   booktitle={Commutative algebra ({T}rento, 1981)},
      series={Lecture Notes in Pure and Appl. Math.},
      volume={84},
   publisher={Dekker, New York},
       pages={79\ndash 169},
}

\bib{Herzog_hilb_polynom}{article}{
      author={Herzog, J\"urgen},
      author={Puthenpurakal, Tony~J.},
      author={Verma, Jugal~K.},
       title={Hilbert polynomials and powers of ideals},
        date={2008},
     journal={Math. Proc. Cambridge Philos. Soc.},
      volume={145},
      number={3},
       pages={623\ndash 642},
}

\bib{HWH}{article}{
      author={Hoffman, J.~William},
      author={Wang, Hao~Hao},
       title={Curvilinear base points, local complete intersection and {K}oszul
  syzygies in biprojective spaces},
        date={2006},
     journal={Trans. Amer. Math. Soc.},
      volume={358},
      number={8},
       pages={3385\ndash 3398},
}

\bib{HONG_SIMIS_VASC_ELIM}{article}{
      author={Hong, Jooyoun},
      author={Simis, Aron},
      author={Vasconcelos, Wolmer~V.},
       title={On the homology of two-dimensional elimination},
        date={2008},
     journal={J. Symbolic Comput.},
      volume={43},
      number={4},
       pages={275\ndash 292},
}

\bib{HULEK_KATZ_SCHREYER_SYZ}{article}{
      author={{Hulek}, Klaus},
      author={{Katz}, Sheldon},
      author={{Schreyer}, Frank-Olaf},
       title={{Cremona transformations and syzygies.}},
        date={1992},
        ISSN={0025-5874; 1432-1823/e},
     journal={{Math. Z.}},
      volume={209},
      number={3},
       pages={419\ndash 443},
}

\bib{HYRY_MULTIGRAD}{article}{
      author={Hyry, Eero},
       title={The diagonal subring and the {C}ohen-{M}acaulay property of a
  multigraded ring},
        date={1999},
     journal={Trans. Amer. Math. Soc.},
      volume={351},
      number={6},
       pages={2213\ndash 2232},
}

\bib{JEFFRIES_MONTANO_VARBARO}{article}{
      author={Jeffries, Jack},
      author={Monta\~no, Jonathan},
      author={Varbaro, Matteo},
       title={Multiplicities of classical varieties},
        date={2015},
     journal={Proc. Lond. Math. Soc. (3)},
      volume={110},
      number={4},
       pages={1033\ndash 1055},
}

\bib{JOUANOLOU_INVARIANT_ELIM}{article}{
      author={Jouanolou, Jean-Pierre},
       title={Aspects invariants de l'\'elimination},
        date={1995},
     journal={Adv. Math.},
      volume={114},
      number={1},
       pages={1\ndash 174},
}

\bib{KAP}{book}{
      author={Kaplansky, Irving},
       title={Commutative rings},
     edition={Revised},
   publisher={The University of Chicago Press, Chicago, Ill.-London},
        date={1974},
}

\bib{Kleiman_geom_mult}{article}{
      author={Kleiman, Steven},
      author={Thorup, Anders},
       title={A geometric theory of the {B}uchsbaum-{R}im multiplicity},
        date={1994},
     journal={J. Algebra},
      volume={167},
      number={1},
       pages={168\ndash 231},
}

\bib{KPU_NORMAL_SCROLL}{article}{
      author={Kustin, Andrew},
      author={Polini, Claudia},
      author={Ulrich, Bernd},
       title={Rational normal scrolls and the defining equations of {R}ees
  algebras},
        date={2011},
     journal={J. Reine Angew. Math.},
      volume={650},
       pages={23\ndash 65},
}

\bib{KPU_blowup_fibers}{article}{
      author={Kustin, Andrew},
      author={Polini, Claudia},
      author={Ulrich, Bernd},
       title={Blowups and fibers of morphisms},
        date={2016},
     journal={Nagoya Math. J.},
      volume={224},
      number={1},
       pages={168\ndash 201},
}

\bib{KPU_BIGRAD_STRUCT}{article}{
      author={Kustin, Andrew},
      author={Polini, Claudia},
      author={Ulrich, Bernd},
       title={The bi-graded structure of symmetric algebras with applications
  to {R}ees rings},
        date={2017},
     journal={J. Algebra},
      volume={469},
       pages={188\ndash 250},
}

\bib{KPU_GOR3}{article}{
      author={Kustin, Andrew},
      author={Polini, Claudia},
      author={Ulrich, Bernd},
       title={The equations defining blowup algebras of height three
  {G}orenstein ideals},
        date={2017},
     journal={Algebra Number Theory},
      volume={11},
      number={7},
       pages={1489\ndash 1525},
}

\bib{MARLEY_ASS_PRIMES_LOCAL_COHOM}{article}{
      author={Marley, Thomas},
       title={The associated primes of local cohomology modules over rings of
  small dimension},
        date={2001},
     journal={Manuscripta Math.},
      volume={104},
      number={4},
       pages={519\ndash 525},
}

\bib{MICALI_REES}{inproceedings}{
      author={Micali, Artibano},
       title={Sur les algebres universelles},
        date={1964},
   booktitle={Annales de l'institut fourier},
      volume={14},
       pages={33\ndash 87},
}

\bib{PAN_RUSSO}{article}{
      author={Pan, Ivan},
      author={Russo, Francesco},
       title={Cremona transformations and special double structures},
        date={2005},
     journal={Manuscripta Math.},
      volume={117},
      number={4},
       pages={491\ndash 510},
}

\bib{SIMIS_PAN_JONQUIERES}{article}{
      author={Pan, Ivan},
      author={Simis, Aron},
       title={Cremona maps of de {J}onqui\`eres type},
        date={2015},
     journal={Canad. J. Math.},
      volume={67},
      number={4},
       pages={923\ndash 941},
}

\bib{SIMIS_RUSSO_BIRAT}{article}{
      author={Russo, Francesco},
      author={Simis, Aron},
       title={On birational maps and {J}acobian matrices},
        date={2001},
     journal={Compositio Math.},
      volume={126},
      number={3},
       pages={335\ndash 358},
}

\bib{SCHENZEL_NOTES}{incollection}{
      author={Schenzel, Peter},
       title={On the use of local cohomology in algebra and geometry},
        date={1998},
   booktitle={Six lectures on commutative algebra ({B}ellaterra, 1996)},
      series={Progr. Math.},
      volume={166},
   publisher={Birkh\"auser, Basel},
       pages={241\ndash 292},
}

\bib{SEDERBERG20161}{article}{
      author={Sederberg, Thomas~W.},
      author={Goldman, Ronald~N.},
      author={Wang, Xuhui},
       title={Birational 2d free-form deformation of degree $1\times n$},
        date={2016},
     journal={Computer Aided Geometric Design},
      volume={44},
       pages={1\ndash 9},
}

\bib{SEDERBERG20151}{article}{
      author={Sederberg, Thomas~W.},
      author={Zheng, Jianmin},
       title={Birational quadrilateral maps},
        date={2015},
     journal={Computer Aided Geometric Design},
      volume={32},
       pages={1\ndash 4},
}

\bib{Simis_cremona}{article}{
      author={Simis, Aron},
       title={Cremona transformations and some related algebras},
        date={2004},
     journal={J. Algebra},
      volume={280},
      number={1},
       pages={162\ndash 179},
}

\bib{SIM_ULRICH_VASC_JACOBIAN}{article}{
      author={Simis, Aron},
      author={Ulrich, Bernd},
      author={Vasconcelos, Wolmer~V.},
       title={Jacobian dual fibrations},
        date={1993},
     journal={Amer. J. Math.},
      volume={115},
      number={1},
       pages={47\ndash 75},
}

\bib{Sim_Ulr_Vasc_mult}{article}{
      author={Simis, Aron},
      author={Ulrich, Bernd},
      author={Vasconcelos, Wolmer~V.},
       title={Codimension, multiplicity and integral extensions},
        date={2001},
     journal={Math. Proc. Cambridge Philos. Soc.},
      volume={130},
      number={2},
       pages={237\ndash 257},
}

\bib{Rees_alg_modules}{article}{
      author={Simis, Aron},
      author={Ulrich, Bernd},
      author={Vasconcelos, Wolmer~V.},
       title={Rees algebras of modules},
        date={2003},
     journal={Proc. London Math. Soc. (3)},
      volume={87},
      number={3},
       pages={610\ndash 646},
}

\bib{SIMIS_VASC_SYM_INT}{article}{
      author={Simis, Aron},
      author={Vasconcelos, Wolmer~V.},
       title={On the dimension and integrality of symmetric algebras},
        date={1981},
        ISSN={0025-5874},
     journal={Math. Z.},
      volume={177},
      number={3},
       pages={341\ndash 358},
}

\bib{Simis_Vasc_Syz_Conormal_Mod}{article}{
      author={Simis, Aron},
      author={Vasconcelos, Wolmer~V.},
       title={The syzygies of the conormal module},
        date={1981},
     journal={Amer. J. Math.},
      volume={103},
      number={2},
       pages={203\ndash 224},
}

\bib{stacks-project}{misc}{
      author={{Stacks project authors}, The},
       title={The stacks project},
         how={\url{https://stacks.math.columbia.edu}},
        date={2019},
}

\bib{STURMFELS_GROBNER}{book}{
      author={Sturmfels, Bernd},
       title={Gr\"obner bases and convex polytopes},
      series={University Lecture Series},
   publisher={American Mathematical Society, Providence, RI},
        date={1996},
      volume={8},
}

\bib{VASC_EQ_REES}{article}{
      author={Vasconcelos, Wolmer~V.},
       title={On the equations of {R}ees algebras},
        date={1991},
     journal={J. Reine Angew. Math.},
      volume={418},
       pages={189\ndash 218},
}

\end{biblist}
\end{bibdiv}

\end{document}